\def\namedlabel#1#2{\begingroup
    #2%
    \def\@currentlabel{#2}%
    \label{#1}\endgroup
}
\newtheorem{Thm}{Theorem}[section]
\newtheorem{Prop}[Thm]{Proposition}
\newtheorem{Lem}[Thm]{Lemma}
\newtheorem{Cor}[Thm]{Corollary}
\newtheorem*{Thm*}{Theorem}
\theoremstyle{definition}
\theoremstyle{definition}
\newtheorem{Rem}[Thm]{Remark}}
\theoremstyle{definition}
\newtheorem{Ex}[Thm]{Example}}
\newenvironment{Proof}{\rm \trivlist\item[\hskip \labelsep{\bf
Proof.\quad}]}{\hfill\qed\par\medskip\endtrivlist}
\newcommand{\G}{\mathcal G}
\newcommand{\supp}{\operatorname{supp}}
\renewcommand{\int}{\operatorname{int}}
\renewcommand{\S}{S^\sharp} 
\newcommand{\tight}[1]{K\G_T({#1})} 
\renewcommand{\empty}{\varepsilon} 
\newcommand{\sing}[2]{I_{#1}(#2)} 
\newcommand{\tightid}[2]{{\mathcal T}_{#1}(#2)} 
\newcommand{\ol}{\overline}
\newcommand{\eq}[1]{\equiv_{#1}}
\newcommand{\N}[3]{\mathcal N_{#1}(#2,#3)}
\newcommand{\Sg}{\mathcal{S}\Gamma} 
\newcommand{\Vmin}{V_{\min}} 
\newcommand{\Vess}{V_{\mathrm{ess}}} 
\numberwithin{equation}{section}
\title[On the simplicity of Nekrashevych algebras]{On the simplicity of Nekrashevych algebras of contracting self-similar groups}
\author{Benjamin Steinberg}
\address[B.~Steinberg]{%
    Department of Mathematics\\
    City College of New York\\
    Convent Avenue at 138th Street\\
    New York, New York 10031\\
    USA}
\email{bsteinberg@ccny.cuny.edu}
\author{N\'ora Szak\'acs}
\address[N.~Szak\'acs]{%
Department of Mathematics\\ University of York \\
Heslington, York, YO10 5DD \\ UK\\}
\email{szakacsn@math.u-szeged.hu}
\thanks{\textcolor{white}{.}}
\thanks{\noindent
\setlength\intextsep{0pt}
\begin{wrapfigure}{l}{0cm}
\includegraphics[height=3em]{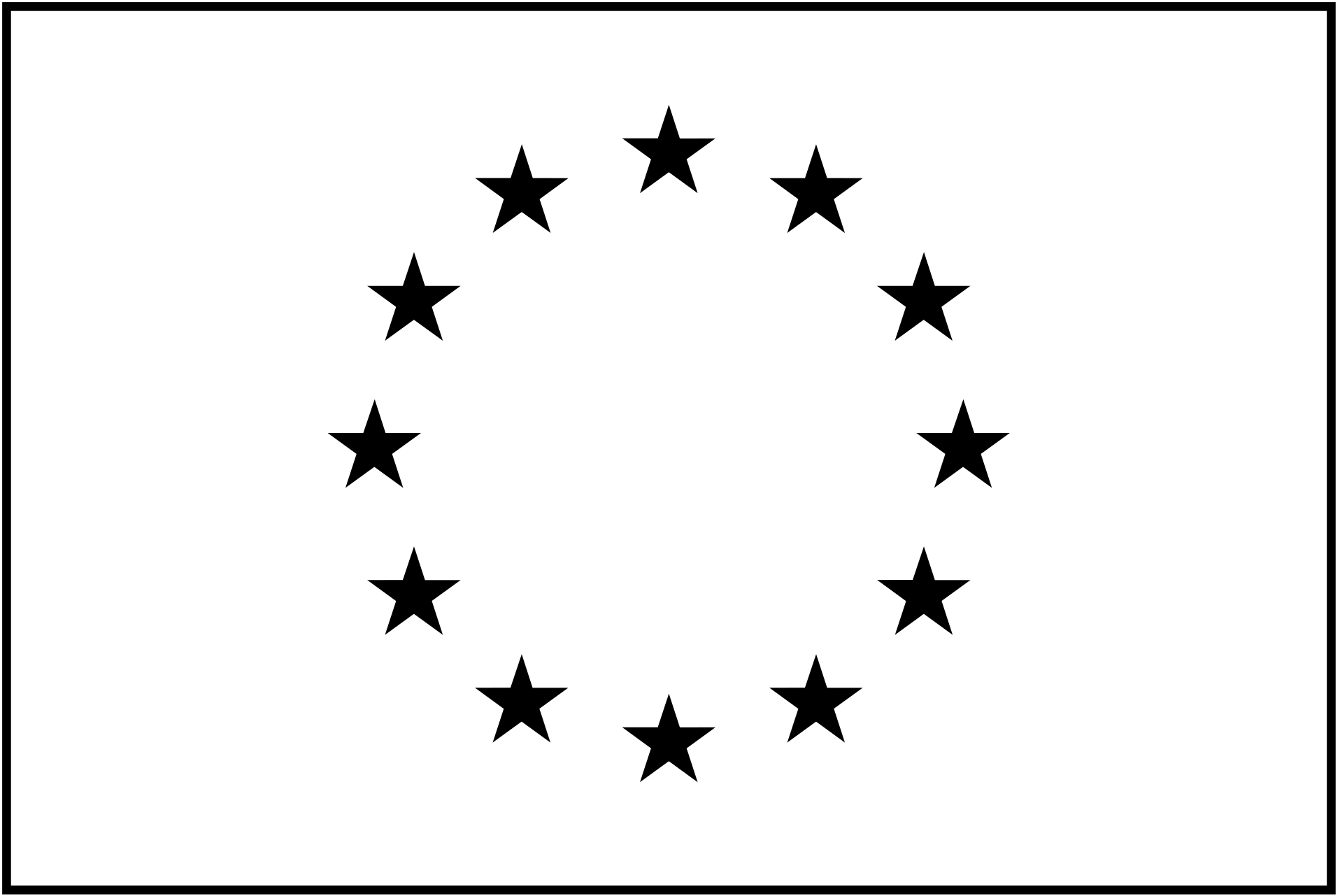}
\end{wrapfigure}
The second author was funded by the European Union’s
Horizon 2020 research and innovation programme under the
Marie Sk{\l}odowska-Curie grant agreement No 799419.\\
}
\date{\today}
\keywords{self-similar groups, Nekrashevych algebras, contracting groups}
\subjclass[2010]{20F65,47L40,16S88,20M18,20M25}
\begin{document}

\begin{abstract}
Nekrashevych algebras of self-similar group actions are natural generalizations of the classical Leavitt algebras.  They are discrete analogues of the corresponding Nekrashevych $C^\ast$-algebras.  In particular, Nekrashevych, Clark, Exel, Pardo, Sims and Starling have studied the question of simplicity of Nekrashevych algebras, in part, because non-simplicity of the complex algebra implies non-simplicity of the $C^\ast$-algebra.

In this paper we give necessary and sufficient conditions for the Nekrashevych algebra of a contracting group to be simple. Nekrashevych algebras of contracting groups are finitely presented.  We give an algorithm which on input the nucleus of the contracting group, outputs all characteristics of fields over which the corresponding Nekrashevych algebra is simple.  Using our methods, we determine the fields over which the Nekrashevych algebras of a number of well-known contracting groups are simple including the Basilica group, Gupta-Sidki groups, \textsf{GGS}-groups, multi-edge spinal groups,  \v{S}uni\'{c} groups associated to polynomials (this latter family includes the Grigorchuk group, Grigorchuk-Erschler group and Fabrykowski-Gupta group) and self-replicating spinal automaton groups.
\end{abstract}

\maketitle

\section{Introduction}
The theory of self-similar groups began in the seventies with the Ukrainian school, who studied groups generated by finite automata in hopes of finding finitely generated, infinite torsion groups; see~\cite{GNS}.  The modern theory of self-similar groups was laid out by Nekrashevych in his seminal monograph~\cite{selfsimilar}.  A self-similar group is a group $G$ acting on a regular rooted $n$-ary tree in such a way that the self-similarity of the tree is reflected in the group.  Early famous examples of self-similar groups include the celebrated Grigorchuk $2$-group~\cite{grigroup}, the first known group of intermediate growth (solving a well-known problem of Milnor), and the Gupta-Sidki $p$-groups~\cite{GuptaSidki}; these are among the easiest to understand examples of finitely generated,  infinite torsion groups.  The Basilica group was the first example of an amenable group that is not constructible from groups of subexponential growth using the usual operations preserving amenability~\cite{GrigZuk,BV05}.  The Basilica group belongs to  the class of iterated monodromy groups, which is an important subclass of self-similar groups.  Iterated monodromy groups were introduced by Nekrashevych to study dynamical systems~\cite{selfsimilar} and played a key role in the solution of Hubbard's twisted rabbit problem in complex dynamics~\cite{rabbit}.  From the dynamical systems point of view, contracting self-similar groups are perhaps the most important class of self-similar groups.  These groups have a certain limit space upon which they act, and, in the case of iterated monodromy groups coming from complex dynamics, they encode the dynamics on the Julia set~\cite{selfsimilar}.

In an influential paper~\cite{Nekcstar}, Nekrashevych associated a $C^\ast$-algebra to each self-similar group.  In the case that the group is trivial, the $C^\ast$-algebra is the Cuntz algebra~\cite{cuntz}, which is a famous finitely presented simple $C^\ast$-algebra.  The Nekrashevych $C^\ast$-algebra is the algebra of a minimal and effective ample groupoid. The Steinberg algebra~\cite{mygroupoidalgebra} of this groupoid is the algebraic counterpart of the Nekrashevych $C^\ast$-algebra and is called the \emph{Nekrashevych algebra} of the self-similar group (with coefficients in a field).   Nekrashevych algebras over fields were first studied in~\cite{Nekrashevychgpd};  further work was done in~\cite{algebraicExelPardo, gridealSteinberg} and by the authors in~\cite{simplicity}.  When the group is trivial, the Nekrashevych algebra is the Leavitt algebra~\cite{Leavitt}, a celebrated finitely presented simple algebra. The Nekrashevych algebra of a self-similar group can be given by generators and relations, and  in the case of a contracting self-similar group the algebra is finitely presented~\cite{Nekcstar}.

The groupoid  associated to a self-similar group action  is amenable if the group is amenable~\cite{ExPadKatsura}.  The groupoid is also amenable for self-similar groups that are self-replicating and contracting~\cite[Theorem~5.6]{Nekcstar}.  Minimal and effective Hausdorff ample groupoids have simple Steinberg algebras and, if amenable, have simple $C^\ast$-algebras; cf.~\cite{operatorsimple1,operatorguys2,groupoidprimitive,Renault}.  Unfortunately, the groupoids associated to self-similar groups are rarely Hausdorff.  Nonetheless, the recent results of~\cite{nonhausdorffsimple} provide an avenue to studying simplicity of these $C^\ast$-algebras.  In particular, the results of~\cite{nonhausdorffsimple} imply that if the complex Steinberg algebra of the groupoid is not simple, then the $C^\ast$-algebra is not simple and so it is natural to look at Nekrashevych algebras over fields and study when they are simple.  This was first done by Nekrashevych, himself, in~\cite{Nekrashevychgpd}.

Nekrashevych showed in~\cite{Nekrashevychgpd} that the Nekrashevych algebra of the Grigorchuk group is not simple in characteristic $2$.  Clark \textit{et al.}~\cite{nonhausdorffsimple} showed that the Nekrashevych algebra of the Grigorchuk group is simple over all fields of characteristic different than $2$ and that the Nekrashevych $C^\ast$-algebra of the Grigorchuk group is simple.  In an unpublished note, Nekrashevych showed that the Nekrashevych algebra of the Grigorchuk-Erschler group~\cite{Grigdegree,Erschler} is not simple over any field.  This is a contracting, self-replicating and amenable group and its corresponding $C^\ast$-algebra is the first example of an algebra of a minimal, effective and amenable ample (non-Hausdorff) groupoid that is not simple of which we are aware.

In this paper we give a complete characterization of when a contracting self-similar group $G$ has a simple Nekrashevych algebra in terms of the nucleus of the group.  It turns out that either the Nekrashevych algebra is simple over no field, or it is simple over fields of all but finitely many positive characteristics.  We present an algorithm that on input the nucleus, which is a certain finite automaton, outputs the characteristics over which the algebra is simple.  Also, we provide for each finite set $\mathcal P$ of primes, a contracting self-similar group whose Nekrashevych algebra is simple over precisely those fields whose characteristic does not belong to $\mathcal P$. We also show that the Nekrashevych algebra of a self-similar group $G$ is simple if and only if its natural representation on the $K$-vector space with basis the boundary of the tree is faithful.

Using the algorithm and its theoretical underpinnings, we analyze the simplicity of the Nekrashevych algebras of a number of well-known contracting self-similar groups.   In particular, we show that the Basilica group~\cite{BasilicaGZ}\footnote{It was pointed out to us by Nekrashevych that the Basilica group has a Hausdorff associated groupoid, so our methods are not needed for this case.}, the Gupta-Sidki $p$-groups~\cite{GuptaSidki} and the Fabrykowski-Gupta group~\cite{FabGupta} have simple Nekrashevych algebras over every field.   We show that \textsf{GGS}-groups have simple Nekrashevych algebras over every field when defined over an alphabet of prime power size, but over alphabets of size divisible by more than one prime they can have algebras that are simple over every field or over no field, and we characterize which situation occurs. We show that for multi-edge spinal groups~\cite{multiedgespinal}, the Nekrashevych algebra is never simple unless the group is a \textsf{GGS}-group.  Moreover, our methods give fairly short proofs of simplicity/non-simplicity, when compared to the long topological arguments involving ample groupoids found, for example, in~\cite{nonhausdorffsimple}.

 We give a complete characterization of simplicity of Nekrashevych algebras of \v{S}uni\'{c} groups associated to polynomials over finite prime fields~\cite{sunicgroups}.  These include the Grigorchuk group and the Grigorchuk-Erschler group.  In particular, for each prime $p$ we show that the  Nekrashevych algebra of the  \v{S}uni\'{c} group associated to a primitive polynomial over $\mathbb Z_p$ (which is a finitely generated, infinite $p$-group of intermediate growth) is simple over every field except those of characteristic $p$; the Grigorchuk group is the case $p=2$ and the primitive polynomial is $x^2+x+1$, which is the smallest possible example.

Many of these examples belong to a class of self-similar contracting groups that we call multispinal groups, which simultaneously generalizes \textsf{GGS}-groups, multi-edge spinal groups, \v{S}uni\'{c} groups and self-replicating spinal automaton groups~\cite{spinalgroups}.  For these groups, the criterion for simplicity of the Nekrashevych algebra boils down to the representation theory of finite groups, i.e., to Fourier analysis on finite groups.  After the first verison of our paper was placed on ArXiv, Yoshida proved that the Nekrashevych algebra of a multispinal group over the complex numbers is simple if and only if  its Nekrashevych $C^\ast$-algebra is simple~\cite{multispinalCstar}.

Our approach to studying simplicity is based on inverse semigroup algebras.  Associated to any self-similar group $G$ is a congruence-free inverse semigroup $S$.
The ample groupoid associated to the self similar group $G$ is the tight groupoid of $S$ (in the sense of Exel~\cite{Exel}).
In~\cite{simplicity}, the authors show that the Steinberg algebra of the tight groupoid of a congruence-free inverse semigroup has a unique maximal ideal and give an explicit description of its preimage in the algebra of the inverse semigroup, called the singular ideal.  We use this description of the singular ideal, specialized to the case of inverse semigroups associated to self-similar group actions, to attack the simplicity question for Nekrashevych algebras.  In particular, we show that if the singular ideal contains any element representing a non-trivial element of the Nekrashevych algebra, then there is such an element coming from the group algebra of $G$.  In the case of a contracting group, we show such an element can be assumed to be supported on the nucleus.  We then encode the question of whether the nucleus supports a singular element into the combinatorics of a certain finite labeled graph.

The paper is organized as follows.  Section~\ref{s:inverse.sgps} is a preliminary section on inverse semigroups and their associated algebras.  This is followed by a section recalling fundamental definitions and notions from the theory of self-similar groups.  Section~\ref{s:na} recalls the definition of the Nekrashevych algebra and proves in the algebraic setting some analogues of results from the $C^\ast$-algebraic setting~\cite{Nekcstar}.  After some preliminary results on the tight ideal and the singular ideal of the semigroup algebra of the inverse semigroup associated to a self-similar group action, we prove our main result on simplicity of Nekrashevych algebras of contracting groups in Section~\ref{s:simple}; some useful intermediary results that apply to Nekrashevych algebras of arbitrary self-similar groups are also included. Section~\ref{s:ex} applies the results of the previous section to the Basilica group; we also provide a simple method to construct examples of non-simple Nekrashevych algebras.  In Section~\ref{s:multispinal}, we introduce multispinal groups, which include many of the most prominent families of contracting self-similar groups.  A criterion for simplicity of the Nekrashevych algebra of a multispinal group is given in terms of the representation theory of finite groups.  The criterion is applied to Gupta-Sidki groups, \textsf{GGS}-groups, multi-edge spinal groups and \v{S}uni\'{c} groups (generalizing the Grigorchuk  and Grigorchuk-Erschler groups).

\section{Inverse semigroups and their algebras}\label{s:inverse.sgps}
An \emph{inverse semigroup} is a semigroup $S$ such that, for each $s\in S$, there is a unique element $s^\ast\in S$ with $ss^\ast s=s$ and $s^\ast ss^\ast=s^\ast$. Note that $(st)^\ast =t^\ast s^\ast$ and $(s^\ast)^\ast=s$. The set $E(S)$ of idempotents of $S$ is a commutative subsemigroup and  is a meet semilattice with respect to the partial ordering $e\leq f$ if $ef=e$.  Moreover, the meet is given by the product.  A key example of an inverse semigroup is the symmetric inverse monoid on a set $X$, that is, the monoid of all partially defined injective mappings from $X$ to $X$ under composition of partial functions.  Every inverse semigroup can be faithfully represented as an inverse semigroup of partial injective mappings.  See~\cite{Lawson} for an introduction to inverse semigroups.

An inverse semigroup $S$ with zero is \emph{congruence-free} if it admits no proper non-zero quotients.  An inverse semigroup with zero is well known to be congruence-free if and only if it satisfies the following three conditions~\cite{Baird}: $SsS=S$ for all $s\neq 0$; $E(S)$ is a maximal commutative subsemigroup; and if $0\neq f<e\in E(S)$, then $fg=0$ for some $0\neq g<e$.  In the next section, we shall be interested in a congruence-free inverse semigroup associated to a self-similar group action.

If $K$ is a field and $S$ is an inverse semigroup with zero, the \emph{contracted semigroup algebra} $K_0S$ of $S$  is the $K$-algebra with basis $\S=S\setminus\{0\}$ and multiplication extending that of $S$ where we identify the zeroes of $S$ and $K$.  For $a=\sum_{s\in \S}a_ss$ in $K_0S$, we denote by $\supp a$ the set of $s\in \S$ with $a_s\neq 0$.

In this paper, we will be interested in a certain quotient of the contracted inverse semigroup algebra, first introduced by Exel in the $C^\ast$-algebraic setting~\cite{Exel}.
Let $E$ be a semilattice. We say that $F \subseteq E$ \emph{covers} $e \in E$ if $f \leq e$ for all $f \in F$, and if $0\neq f'\leq e$, then $f'f\neq 0$ for some $f\in F$.

Given any inverse semigroup $S$, the \emph{tight ideal} $\tightid{K}{S}$ of $K_0S$ is the ideal generated by all products $\prod_{f \in F}(e-f)$, where $e \in E(S)$, and  $F \subseteq E(S)$ is a finite cover of $e$. This ideal arises naturally as the kernel of the surjective homomorphism $K_0S \to \tight{S}$, where $\tight{S}$ is the Steinberg algebra~\cite{mygroupoidalgebra} of the tight groupoid $\G_T(S)$ of the inverse semigroup $S$ as defined by Exel~\cite{Exel}; see~\cite{groupoidprimitive,simplicity} for details.

The singular ideal of $K_0S$ was introduced by the authors in~\cite{simplicity}.  An element $a\in K_0S$ is \emph{singular} if, for all $0\neq e\in E(S)$, there exists $0\neq f\leq e$ with $af=0$.  The singular elements form a two-sided ideal $\sing K S$ containing $\tightid K S$ called the \emph{singular ideal}.  One of the main results of~\cite{simplicity} is then the following theorem.

\begin{Thm}
\label{t:cong-free.simple}
Let $S$ be a congruence-free inverse semigroup and $K$ a field.  Then $\sing{K}{S}$ is the unique maximal ideal of $K_0S$ containing $\tightid K S$.  In particular, $K_0S/\tightid{K}{S}$ is simple if and only if $\sing K S = \tightid K S$.
\end{Thm}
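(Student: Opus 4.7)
The plan is to establish the maximality of $\sing K S$ directly, from which both uniqueness and the ``in particular'' clause follow formally.

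First, I would verify that $\sing K S$ is a proper two-sided ideal of $K_0 S$ that contains $\tightid K S$. The ideal property is a direct check from the definition of singularity. For the inclusion $\tightid K S \subseteq \sing K S$: given a generator $x = \prod_{f \in F}(e-f)$ with $F$ a finite cover of $e$, and any nonzero $g \in E(S)$, either $ge = 0$ and so $xg = 0$, or else the covering property supplies $f \in F$ with $gef \neq 0$; then a short calculation in the commutative subring generated by $E(S)$, using $f \leq e$, gives $(e-f) \cdot gef = gf - gf = 0$, so $x \cdot gef = 0$. Properness is immediate: a nonzero idempotent $e$ is never singular, since $ef = f \neq 0$ for every nonzero $f \leq e$.

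The heart of the proof is maximality: I claim that any ideal $I$ with $\sing K S \subsetneq I$ must equal $K_0 S$. Pick $a \in I \setminus \sing K S$; by definition there exists $e \in E(S) \setminus \{0\}$ with $af \neq 0$ for every nonzero $f \leq e$. The goal is to produce some nonzero semigroup element $s \in I$, after which the first congruence-free axiom $SsS = S$ yields $I \supseteq S \setminus \{0\}$, and hence $I = K_0 S$. I would proceed by induction on $|\supp a|$. The base case $|\supp a| = 1$ is immediate: $a = \alpha s$, so $s = \alpha^{-1} a \in I$. For the inductive step $|\supp a| \geq 2$, the key observation is that if one can find a nonzero idempotent $f \leq e$ with $|\supp(af)| < |\supp a|$, then $af \in I$ is still non-singular with witness $f$, since for any nonzero $h \leq f$ one has $af \cdot h = ah \neq 0$ by the witness property for $a$, and the induction applies.

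The main obstacle is producing such an $f$: distinct elements of $\supp a$ need not be separated by any single multiplication by an idempotent below $e$. To overcome this, one exploits the tight ideal relations (which lie in $I$): modulo $\tightid K S$, the idempotent $e$ can be replaced by the sum $\sum_{f \in F} f$ over any orthogonal cover $F$, so one may freely refine $e$ through such covers. A pigeonhole argument then supplies some cover piece $f$ for which $af \neq 0$, and maximal commutativity of $E(S)$ together with Baird's third condition (every nonzero idempotent admits a nonzero subidempotent orthogonal to another subidempotent) allow one to arrange $f$ so that two support elements of $a$ either collapse or one of them is annihilated, yielding the desired reduction. Once maximality of $\sing K S$ is established, uniqueness follows formally: any maximal ideal $M \supseteq \tightid K S$ is either contained in $\sing K S$, and hence equals it by maximality of $M$, or else equals $K_0 S$ by the shown maximality, contradicting maximality of $M$. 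The ``in particular'' clause is then immediate, since simplicity of $K_0 S / \tightid K S$ is equivalent to $\tightid K S$ being the unique maximal ideal above itself, which by the uniqueness just proved equals $\sing K S$.
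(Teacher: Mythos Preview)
The paper does not actually prove this theorem: it is quoted from the authors' earlier work~\cite{simplicity} (``One of the main results of~\cite{simplicity} is then the following theorem''), so there is no proof here to compare against. I will therefore assess your proposal on its own merits.

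Your overall architecture is the right one, and it matches the strategy of the cited reference: show $\sing K S$ is a proper ideal above $\tightid K S$, then prove maximality by taking $a\in I\setminus\sing K S$ with witness $e$ and reducing $|\supp a|$ until a single nonzero semigroup element lies in $I$. The preliminary verifications (ideal, containment, properness, the formal deduction of uniqueness and the ``in particular'') are fine.

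The gap is in the inductive step. You assert that ``maximal commutativity of $E(S)$ together with Baird's third condition \ldots allow one to arrange $f$ so that two support elements of $a$ either collapse or one of them is annihilated,'' but you give no mechanism for this, and the surrounding remarks do not help: the pigeonhole over an orthogonal cover is vacuous here, since the witness property already guarantees $af\neq 0$ for \emph{every} nonzero $f\le e$, and the tight-ideal relation $e\equiv\sum_{f\in F} f$ modulo $\tightid K S$ does not bear on whether some particular $af$ has strictly smaller support. Right multiplication by idempotents below $e$ alone need not separate two support elements $s_1,s_2$ once one has reduced (via Baird's third condition, as you implicitly do) to the case $e\le s_i^\ast s_i$ for all $i$: in that situation every $s_i f$ is nonzero for $0\neq f\le e$, and there is no reason two of them must coincide.

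What is missing is the step that actually uses maximal commutativity. One first left-multiplies by $(s_1 e)^\ast$ for some $s_1\in\supp a$, obtaining an element of $I$ whose support contains the idempotent $e$ itself (and which is still non-singular with witness $e$); then, for any other support element $t$, either $t$ is already an idempotent below $e$ and Baird's third condition yields $0\neq f\le e$ with $tf=0$, or $t$ fails to commute with some idempotent, and one manufactures from that non-commutation an idempotent $f\le e$ with $tf\neq ft$, which forces $|\supp(faf)|<|\supp a|$ after a short computation. This two-sided manipulation is the substantive content of the argument in~\cite{simplicity}, and your sketch does not supply it.
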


\section{Self-similar groups and associated monoids}\label{s:ssg}
Let $X$ be a finite alphabet with $|X|\geq 2$.  The free monoid on $X$ is denoted $X^\ast$.  A \emph{self-similar group} over $X$ is a group $G$ with a faithful action on $X^\ast$ by length-preserving permutations such that, for each $x\in X$, there is $g|_x\in G$ with $g(xw)=g(x)g|_x(w)$ for all $w\in X^\ast$; we call $g|_x$ the \emph{section} of $g$ at $x$. See~\cite{selfsimilar} for details.     Note that if one defines $g|_w$ for $w\in X^*$ inductively by $g|_{xv} = (g|_x)|_v$ for $w=xv$ with $x\in X$, then $g(uv) = g(u)g|_u(v)$ for all $u,v\in X^\ast$.   The rules $(gh)|_w= g|_{h(w)}h|_w$, $1|_w=1$ and $g^{-1}|_w = (g|_{g^{-1}(w)})^{-1}$ are easily verified for all $g,h\in G$ and $w\in X^\ast$.  If $A\subseteq G$ and $Y\subseteq X^\ast$, then we put $A|_Y =\{g|_w: g\in A, w\in Y\}$.

A length-preserving permutation is \emph{prefix-preserving} if it preserves the length of the longest common prefix of any two words.  Any self-similar group action is prefix-preserving and the group of all length-preserving and prefix-preserving permutations of $X^\ast$ is self-similar and can be identified with $\mathrm{Aut}(T_X)$ where $T_X$ is the Cayley graph of $X^\ast$, that is, the regular rooted $|X|$-ary tree.  Note that $\partial T_X$ can be identified with $X^{\omega}$ and so $\mathrm{Aut}(T_X)$ acts  on $X^\omega$.  The action of $g\in\mathrm{Aut}(T_X)$ on an infinite word is given by the formula $g(x_1x_2\cdots) = g(x_1)g|_{x_1}(x_2)g|_{x_1x_2}(x_3)\cdots$.  One can, in fact, identify $\mathrm{Aut}(T_X)$ with the isometry group of $X^{\omega}$ with respect to the metric $d(u,v) = 2^{-|u\wedge v|}$ where $u\wedge v$ is the longest common prefix of $u$ and $v$.  Note that $\mathrm{Aut}(T_X)$ is naturally a compact and totally disconnected group, i.e., a profinite group.  See~\cite{GNS} for details.

If $A\subseteq \mathrm{Aut}(T_X)$ is a subset that is closed under taking sections, then we can encode the action of $A$ on $X^\ast$ and $X^\omega$ into a \emph{state diagram} (also called Moore diagram). This is an edge-labeled  directed graph (digraph) with vertex set $A$ and with edges of the form \[a \xrightarrow{\,\,x\mid a(x)\,\,} a|_x\] for $a\in A$ and $x\in X$. The elements of $A$ are referred to as \emph{states} in this context. The action of a state $a$ on a finite or infinite word $w$ over $X$ can be computed from the state diagram by following the unique path from $a$ whose left hand side edge labels read the word $w$ and sending $w$ to the corresponding sequence of right hand side edge labels. The terminal state of the path is $a|_w$ if $w\in X^\ast$.

For example, consider the adding machine over $X=\{0,1\}$. It has states $A=\{a,1_{X^\ast}\}$, and the action on $X^\ast$ is given by  $a(0)=1$, $a(1)=0$ and sections $a|_0=1_{X^\ast}$, $a|_1=a$.  The state diagram is drawn in Figure~\ref{f:adding.machine}.
 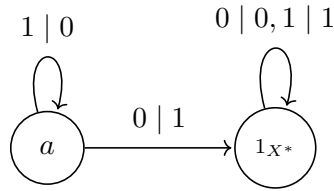
\begin{figure}[htbp]
\begin{center}
\begin{tikzpicture}[->,shorten >=1pt,%
auto,node distance=3cm,semithick,
inner sep=5pt,bend angle=30]
\tikzset{every loop/.style={min distance=10mm,looseness=10}}
\node[state] (E) {\scriptsize{$1_{X^\ast}$}};
\node[state] (A) [left of=E] {$a$};
\path (A) edge [loop above] node [above]  {$1\mid 0$} (A)
      (A) edge [right] node [above]   {$0\mid 1$} (E)
      (E) edge [loop above] node [above] {$0\mid 0, 1\mid 1$} (E);
\end{tikzpicture}
\end{center}
\caption{State diagram for the adding machine~\label{f:adding.machine}}
\end{figure}

In computer science, finite state diagrams are used as visualizations of finite state automata, and so we shall call a finite subset $A\subseteq \mathrm{Aut}(T_X)$ that is closed under taking sections a (finite) \emph{automaton}, and will refer to the elements of $A$ as the states of $A$. More details, including the relationship with automata from computer science, can be found in~\cite{GNS}.
An automorphism $g\in \mathrm{Aut}(T_X)$ is said to be \emph{finite state} if it has only finitely many distinct sections;  equivalently, $g$ is finite state if it is a state of some finite automaton.  The finite state automorphisms form a countable self-similar subgroup of $\mathrm{Aut}(T_X)$. See~\cite{selfsimilar,GNS} for details.

If $G$ is any group, the subsets of $G$ form a monoid with involution under the product $CD=\{cd: c\in C,d\in D\}$ and involution $C^{-1} = \{c^{-1}:c\in C\}$, and hence we can talk about positive and negative powers of a subset of $G$ with respect to this structure.
If $A\subseteq \mathrm{Aut}(T_X)$ is an automaton, then so is $A^k$ for any $k\in \mathbb Z$, and hence the subgroup $G=\bigcup_{k\in \mathbb Z}A^k$ generated by $A$ is self-similar and consists entirely of finite state automorphisms.
Self-similar groups generated by automata are called \emph{automaton groups}; these were studied long before the general notion of a self-similar group was defined.  All automaton groups have decidable word problem~\cite{GNS} and one can in general ask algorithmic questions about them since they are given by a finite data structure: the state diagram of a generating automaton.

The notion of a contracting self-similar group is fundamental. We proceed with the definition and some basic facts that can be extracted from~\cite[Chapter~2.11]{selfsimilar}, sometimes in the body of the text.
A self-similar group $G$ over the alphabet $X$ is said to be \emph{contracting} if there exists a finite automaton $N \subseteq G$ such that, for every $g \in G$, there exists $n \in \mathbb N$ such that $g|_{X^n} \subseteq N$.   Since $N$ is closed under sections, if $g|_{X^n}\subseteq N$, then also $g|_{X^nX^*}\subseteq N$.  The smallest such set $N$ is then called the \emph{nucleus} of $G$.
To describe the nucleus we need a basic lemma about finite digraphs.

\begin{Lem}\label{l:essential}
Let $\Gamma$ be a finite digraph with $n$ vertices.  The following are equivalent for a vertex $v$ of $\Gamma$:
\begin{enumerate}
  \item $v$ is reachable from a strongly connected component of $\Gamma$ containing an edge;
  \item $v$ is reachable from a directed cycle in $\Gamma$;
  \item there is a left infinite directed path ending at $v$;
  \item for each $k\geq 0$, there is a directed path of length $k$ ending at $v$;
  \item there is a directed path of length $n$ ending at $v$;
  \item there is a directed path of length $k$ ending at $v$ for some $k \geq n$.
\end{enumerate}
\end{Lem}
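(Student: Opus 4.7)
The plan is to establish the cyclic chain of implications $(1) \Rightarrow (2) \Rightarrow (3) \Rightarrow (4) \Rightarrow (5) \Rightarrow (6) \Rightarrow (1)$, since most links are routine and the only nontrivial content sits at the step $(6) \Rightarrow (1)$, which will use the pigeonhole principle together with the fact that $\Gamma$ has exactly $n$ vertices.

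For $(1) \Rightarrow (2)$, I note that a strongly connected component containing an edge necessarily contains a directed cycle: either the edge is a loop (done), or it goes from $u$ to $w$ with $u\neq w$ in the same component, and strong connectivity gives a directed path back from $w$ to $u$, combining into a cycle. For $(2) \Rightarrow (3)$, given a cycle $C$ and a finite directed path from some vertex of $C$ to $v$, I concatenate arbitrarily many laps around $C$ with this path to produce a left infinite directed path ending at $v$. For $(3) \Rightarrow (4)$, any left infinite path has a suffix of every finite length $k \geq 0$ ending at $v$. The implications $(4) \Rightarrow (5)$ and $(5) \Rightarrow (6)$ are immediate by specialization.

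The main step is $(6) \Rightarrow (1)$. Suppose there is a directed path $v_0 \to v_1 \to \cdots \to v_k = v$ with $k \geq n$. Then the sequence $v_0, v_1, \ldots, v_k$ has $k+1 \geq n+1$ entries in a set of $n$ vertices, so by pigeonhole two of them coincide, say $v_i = v_j$ with $i < j$. The segment $v_i \to v_{i+1} \to \cdots \to v_j$ is then a closed directed walk of positive length, hence all its vertices lie in a common strongly connected component $C$, and $C$ contains an edge. Since $v = v_k$ is reached from $v_j \in C$ via the tail of the path, this establishes $(1)$. This closes the cycle of implications and completes the proof.

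I do not anticipate any genuine obstacle; the only point that requires a small bit of care is checking that a single edge in a strongly connected component really does force a cycle (the case of a loop on a singleton component is easy to overlook but handled as above), and that the pigeonhole count uses vertices rather than edges, which is why the threshold $n$ (and not $n-1$ or $n+1$) is the correct one.
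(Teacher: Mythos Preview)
Your proof is correct and follows essentially the same cyclic chain of implications $(1)\Rightarrow(2)\Rightarrow(3)\Rightarrow(4)\Rightarrow(5)\Rightarrow(6)\Rightarrow(1)$ as the paper, with the same pigeonhole argument for the only nontrivial step $(6)\Rightarrow(1)$. The paper's version is just slightly terser, but the content is identical.
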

\begin{proof}
Any strongly connected component containing an edge contains a directed cycle so $(1)\implies (2)$.  If $v$ can be reached from a directed cycle $c$, then $\cdots cccp$ has endpoint $v$ for some directed path $p$, whence $(2)\implies (3)$.  Trivially, $(3)\implies (4)\implies (5)\implies (6)$.  If $p$ is a directed path of length $k\geq n$, then $p$ must visit some vertex twice and hence have a subpath that is a directed cycle, which belongs to a strongly connected component of $\Gamma$ containing an edge.  Thus $(6)\implies (1)$.
\end{proof}

We call vertices satisfying the equivalent conditions of Lemma~\ref{l:essential} \emph{essential}. If $A$ is an automaton, then the \emph{essential states} of $A$ are the essential vertices of the state diagram of $A$.   From Lemma~\ref{l:essential}, one can algorithmically compute the essential vertices of a finite digraph. The motivation for our terminology comes from symbolic dynamics.  In that setting, a digraph is called \emph{essential} if each vertex is visited by a bi-infinite directed path or, equivalently, if it has no sources or sinks. Every finite digraph contains a unique largest essential subgraph~\cite[Proposition~2.2.10]{MarcusandLind}.  In a finite digraph with no sinks, like an automaton, the largest essential subgraph is precisely the subgraph induced by what we have called the essential vertices.
We remark that, as unlabeled digraphs, the state diagrams of $A$ and $A^{-1}$ are isomorphic via $a\mapsto a^{-1}$, but this mapping does not preserve the labels~\cite{GNS}.   Hence, the essential states of $A^{-1}$ are the inverses of the essential states of $A$.

An automaton $A$ is the nucleus of a self-similar group (namely, the one it generates) if and only if $A=A^{-1}$, $1_{X^\ast}\in A$,  $A|_X=A$ and $A^2|_{X^k}\subseteq A$  for some $k\geq 0$;  see~\cite[Lemma~2.11.2]{selfsimilar} and the discussion thereafter.  This is a decidable condition. One can algorithmically construct $A^{-1}$ and $A^2$ from $A$ and the first three conditions are easy to verify.  Let $B$ be the set of essential states of $A^2$.  We claim that in the presence of the first three conditions, the fourth condition is equivalent to $A=B$.  The condition that $A|_X=A$ ensures that every element of $A$ is the endpoint of a left infinite directed path in the state diagram of $A$, which is a subgraph of the state diagram of $A^2$ as $1_{X^\ast}\in A$, and so $A\subseteq B$ by Lemma~\ref{l:essential}(3). The fourth condition is equivalent to $B \subseteq A$:  if $A^2|_{X^k}\subseteq A$, then $B\subseteq A$ by Lemma~\ref{l:essential}(4); conversely, if $B\subseteq A$, then Lemma~\ref{l:essential}(5) shows that $A^2|_{X^n}=B\subseteq A$ where $n=|A^2|$.  Note that the condition $1_{X^\ast}\in A$ is redundant in the presence of the other conditions as $1_{X^\ast}$ is an essential state of $A^2$.

There is also a well-known procedure (implemented in the computer system GAP) to build the nucleus $N$ of the group $G$ generated by an automaton $A$, provided $G$ is contracting. We construct a sequence of automata as follows.   Put $A_0=A\cup A^{-1}$.   Assume that the automaton $A_n$ has been constructed so that $A_0\subseteq A_1\subseteq\cdots \subseteq A_n$, with $A_j=A_j^{-1}$ for $0\leq j\leq n$,  and each state in $A_n\setminus A_0$ belongs to $N$.  Build $A_{n+1}$ by adding to $A_n$ the essential states of $A_n^2\setminus A_n$; such states must belong to $N$ by the argument of the previous paragraph, and we have that $A_{n+1}=A_{n+1}^{-1}$.  Then $A_0\subseteq A_1\subseteq\cdots$ and this sequence stabilizes since the nucleus is finite.  If the sequence stabilizes at $A_k$, then $A_k$ contains the nucleus by~\cite[Lemma~2.11.2]{selfsimilar} and the nucleus $N$ will consist of the essential states of $A_k^2$.

A finite state automorphism $g\in \mathrm{Aut}(T_X)$ is said to be \emph{bounded} if there is a constant $C>0$ such that, for any $n\geq 0$, there are at most $C$ words $w\in X^n$ with $g|_w\neq 1$.  The bounded automorphisms form a self-similar subgroup of the group of finite state automorphisms.  An automaton generates a group of bounded automorphisms if and only if after removing the state of the identity function (if it is part of the automaton), each strongly connected component of the state diagram is a single vertex or a cycle, and no cycle can reach another cycle.  Such an automaton is called \emph{bounded}.  Any group generated by a bounded automaton is amenable~\cite{boundedaut}.  See~\cite{selfsimilar} for more on bounded automata.

A self-similar group $G$ over $X$ is called \emph{spherically transitive} if it acts transitively on $X^n$ for each $n\geq 0$.  This is equivalent to $G$ being ergodic with respect to the product of uniform measures on $X^{\omega}$ (cf.~\cite{GNS}).  The self-similar group $G$ is said to be \emph{self-replicating} if it is spherically transitive and $\psi_x\colon \mathrm{Stab}_G(x)\to G$ given by $\psi(x)=g|_x$ is surjective for all $x\in X$.   A self-replicating group is always infinite since it has a proper finite index subgroup mapping onto it.

Associated to any self-similar group $G$ over the alphabet $X$ is a left cancellative LCM monoid $M=X^\ast G$. (LCM means any two elements admitting a common right multiple admit a least common right multiple.)  The product in $M$ is given by $ug\cdot vh = ug(v)g|_vh$ where $u,v\in X^\ast$ and  $g,h\in G$.  The action of $M$ on the left of itself via multiplication is by injective mappings and hence we can form the inverse hull $S$ of $M$, which is the inverse monoid of partial injective mappings of $M$ generated by the left regular action of $M$; see~\cite{ExelSteinbergHull} for more details on inverse hulls of LCM monoids.  The inverse semigroup $S$ consists of $0$ (the empty map) and all elements of the form $ugv^{\ast}$ with $u,v\in X^\ast$ and $g\in G$ (where $v^\ast$ is the inverse of left multiplication by $v$ in $S$).  As a partial mapping on $M$, the domain of $ugv^\ast$ is $vM$, its range is $uM$ and its action is $vm\mapsto ugm$.  Recall that a $\ast$-semigroup is a semigroup with an involution $\ast$ satisfying $(st)^\ast =t^\ast s^\ast$.  One can present $S$ as a $\ast$-semigroup  by the generating set $X\cup G$ and the relations $g^\ast =g^{-1}$, $g\cdot h=gh$, $gx=g(x)g|_x$ and $x^\ast y=\delta_{x,y}$ for $g,h\in G$  and $x,y\in X$.  It is well known that the inverse semigroup $S$ is congruence-free~\cite[Proposition~6.2]{LawsonCorrespond}.
The non-zero idempotents of $S$ are the elements of the form $ww^\ast$ with $w\in X^\ast$ and $ww^\ast\leq uu^\ast$ if and only if $u$ is a prefix of $w$.  Moreover, $ww^\ast vv^\ast=0$ whenever $v,w$ are not prefix comparable.  From this, it immediately follows that $\{xx^\ast: x\in X\}$ covers $1$.

There are also natural faithful actions of $S$ on $X^\ast$ and $X^\omega$.  Namely, $ugv^\ast\colon vX^\ast\to uX^\ast$   and $ugv^\ast\colon vX^\omega\to uX^\omega$ are given by $vw\mapsto ug(w)$ for $w$ in $X^\ast$ or  $X^\omega$.
We denote the action of $s$ on a (finite or infinite) word $w$ in its domain by $s(w)$.

 Note that the action of $S$ on $X^\ast$ can be identified with its action on principal filters (i.e., its Munn representation) and its action on $X^\omega$ can be identified with its action on tight filters in the sense of Exel~\cite{Exel,ExPadKatsura}.  Hence the groupoid of germs for the action of $S$ on $X^\omega$ is isomorphic to its tight groupoid $\G_T(S)$.  This groupoid is always minimal and effective~\cite{ExPadKatsura} but is not always Hausdorff.  We will not use ample groupoids explicitly in this paper other than to discuss their Hausdorffness, but they lurk in the background and many of our ideas can be translated into that language.  We do mention in passing that it is shown in~\cite{ExPadKatsura} that if the self-similar group $G$ is amenable, then so is the groupoid $\G_T(S)$. It is shown in~\cite[Theorem~5.6]{Nekcstar} that $\G_T(S)$ is amenable when $G$ is contracting and self-replicating; note that Nekrashevych assumes in that section of his paper that the groupoid is Hausdorff, but this is not needed for the amenability result, which relies on the polynomial growth of orbits for a self-replicating, contracting group.  When $\G_T(S)$ is amenable, its reduced $C^\ast$-algebra is the same as its universal $C^\ast$-algebra, which is the Nekrashevych $C^\ast$-algebra~\cite{Nekcstar}. It follows from the results of~\cite{nonhausdorffsimple} that if a minimal, effective, amenable and second countable groupoid has a non-simple complex Steinberg algebra, then its $C^\ast$-algebra is not simple.

We remark that it is easy to see from the nucleus whether the groupoid $\G_T(S)$ associated to a contracting self-similar group $G$ is Hausdorff. Let $G$ be a contracting self-similar group with nucleus $N$, and consider the state diagram of the automaton $N$. First remove all edges from the state diagram of $N$ except those with labels of the form $x \mid x$, and then take the subgraph of this graph induced by those vertices from which $1$ is reachable by a directed path. Denote the edge-labeled digraph obtained in this way by $\mathcal H$. For simplicity, we replace the labels $x\mid x$ in $\mathcal H$ by just $x$. Note that all letters $x \in X$ label loops in $\mathcal H$ at $1$.

\begin{Prop}
\label{p:Hd}
The  groupoid $\G_T(S)$ associated to the contracting group $G$ is Hausdorff if and only if the only directed cycles of $\mathcal H$ are the loops at the vertex $1$.  In particular, given as input the nucleus of a contracting self-similar group, one can algorithmically decide if the associated groupoid is Hausdorff.
\end{Prop}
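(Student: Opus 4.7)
The plan is to use the standard criterion that the tight groupoid $\G_T(S)$ is Hausdorff if and only if, for every $s\in S$, the set of points in $\mathrm{dom}(s)$ at which $s$ has a unit germ (i.e., agrees with some idempotent on an open neighborhood) is closed in $\mathrm{dom}(s)$. A short calculation using prefix-preservation of tree automorphisms shows that for $s=ugv^\ast$ with $u\neq v$ this set is empty, while for $s=ugu^\ast$ the set at $u\eta$ reduces to that for $g$ at $\eta$. Hausdorffness therefore reduces to the condition that, for each $g\in G$, the set
\[
U(g)=\{\xi\in X^\omega:\exists\,n\text{ with }g(\xi_1\cdots\xi_n)=\xi_1\cdots\xi_n\text{ and }g|_{\xi_1\cdots\xi_n}=1\}
\]
is closed in $X^\omega$; it is automatically open.

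For $(\Leftarrow)$, assume every cycle of $\mathcal H$ is a self-loop at $1$, and let $\xi$ lie in the closure of $U(g)$. Unpacking the closure condition shows that $g$ fixes every prefix of $\xi$ and that, once $n$ is large enough for $g_n:=g|_{\xi_1\cdots\xi_n}$ to lie in $N$, the element $g_n$ admits a directed path to $1$ in the identity-labeled subgraph of the state diagram; hence $g_n\in\mathcal H$ and $g_n\xrightarrow{\xi_{n+1}\mid\xi_{n+1}}g_{n+1}$ is an edge of $\mathcal H$. This produces an infinite directed walk in the finite graph $\mathcal H$, which by pigeonhole contains a closed subwalk; a minimal closed subwalk using a non-self-loop-at-$1$ edge would be a simple cycle forbidden by hypothesis, so the closed subwalk consists entirely of self-loops at $1$, forcing $g_m=1$ for some $m$. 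Hence $\xi\in U(g)$.

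For $(\Rightarrow)$, given a directed cycle $a=a_0\xrightarrow{y_1}a_1\to\cdots\xrightarrow{y_k}a_k=a_0$ in $\mathcal H$ that is not a self-loop at $1$, the identity $1|_x=1$ implies the only identity-labeled edges out of $1$ are self-loops, so $1$ cannot occur as any $a_i$. Setting $\xi=(y_1\cdots y_k)^\infty$, one computes $a|_{\xi_1\cdots\xi_m}=a_{m\bmod k}\neq 1$ for every $m$, so $\xi\notin U(a)$. On the other hand each $a_i\in\mathcal H$ admits a path to $1$, yielding a word $u_i$ with $a_i$ pointwise fixing $u_i$ and $a_i|_{u_i}=1$; then $\xi_1\cdots\xi_n u_{n\bmod k}X^\omega\subseteq U(a)$ intersects every cylinder neighborhood of $\xi$, placing $\xi$ in the closure of $U(a)$. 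Hence $U(a)$ is not closed and $\G_T(S)$ fails to be Hausdorff.

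For the algorithmic conclusion, $\mathcal H$ is built from the nucleus by discarding non-identity-labeled edges and restricting to the subset of vertices from which $1$ is reachable (a standard reverse-reachability computation), and the condition that the only directed cycles are self-loops at $1$ is decidable in polynomial time by deleting those self-loops and running standard DAG detection. I expect the main subtlety to be the preliminary reduction to the form $U(g)$ for $g\in G$, specifically verifying via prefix-preservation that $s=ugv^\ast$ with $u\neq v$ contributes no unit germs.
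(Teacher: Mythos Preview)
Your proposal is correct. The reduction you worry about at the end is in fact clean: if $ugv^\ast$ has a unit germ at $v\eta$, then $ug(\eta')=v\eta'$ for all $\eta'$ in some cylinder $zX^\omega$, and expanding $g(z\zeta)=g(z)g|_z(\zeta)$ one finds that $|u|\neq|v|$ would force $g|_z$ to send every letter of $X$ to a fixed letter, contradicting bijectivity; hence $|u|=|v|$ and then $u=v$ immediately.

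The paper takes a different but closely related route. Rather than the topological criterion on $X^\omega$, it invokes the equivalent finite-word criterion from~\cite[Theorem~12.2]{ExPadKatsura}: $\G_T(S)$ is Hausdorff if and only if, for each $g\in G$, the set of strongly fixed words $\{w\in X^\ast: g(w)=w,\ g|_w=1\}$ equals $F_gX^\ast$ for some finite $F_g$. The paper first proves a reduction lemma showing it suffices to produce such $F_n$ for $n$ in the nucleus, then observes that the strongly fixed words of $n\in N$ are exactly the labels of directed paths from $n$ to $1$ in $\mathcal H$; the ``no cycles except loops at $1$'' hypothesis then says every such path is a simple path followed by loops at $1$, so the finitely many simple-path labels form $F_n$, while a nontrivial cycle produces infinitely many strongly fixed words with no strongly fixed proper prefix. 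Your argument instead works directly with an arbitrary $g\in G$, using the contracting property to push the sections $g_n$ into $N$ and then running a pigeonhole/DAG argument on the resulting infinite walk in $\mathcal H$. The paper's approach yields the explicit structural statement that Hausdorffness is entirely determined by the nucleus (via the reduction lemma), while your approach avoids that intermediate step and is slightly more self-contained, not requiring the citation to~\cite{ExPadKatsura}.
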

\begin{proof}
By~\cite[Theorem~12.2]{ExPadKatsura} (see also~\cite[Lemma~5.4]{Nekcstar}), $\G_T(S)$ is Hausdorff if and only if for each $g \in G$, there exists a finite set $F_g$ such that $\{w \in X^\ast: gw=w\}=F_gX^\ast$ (where $gw$ is the product in the monoid $M=X^{\ast} G$). If $gw=w$, we say that $w$ is \emph{strongly fixed} by $g$, and this holds if and only if $g(w)=w$ and $g|_w=1$, or equivalently, if $g$ fixes $wX^\ast$. In this case, $g$ of course strongly fixes any word in $wX^\ast$.

Let $N$ be the nucleus of $G$.  First, we prove that if such a finite set $F_n$ exists for all $n \in N$, then there exists one for any $g \in G$. Let $k \in \mathbb N$ such that $g|_{X^k} \subseteq N$, and take a word $w$ of length at least $k$. Then $w=uv$ with $|u|=k$, and $gw=guv=g(u)g|_uv$, where $g|_u \in N$. Thus $gw=w$ holds if and only if $g(u)=u$ and $v \in F_{g|_u}X^\ast$, that is, if and only if $w \in uF_{g|_u}X^\ast$.
Let
\[F_g=\{w \in X^\ast: |w| < k \hbox{ and } gw=w\} \cup \{uF_{g|_u} : |u| =k \hbox{ and } g(u)=u \}.\]
The above set is finite, and we have $\{w \in X^\ast: gw=w\}=F_gX^\ast$ by the previous observation.

We now proceed to prove the equivalence of the statement and the existence of such sets $F_n$ for all $n \in N$. Let $w\in X^\ast$. As observed, we have $nw=w$ if and only if $n(w)=w$ and $n|_w=1$, that is, if and only if there is a path in $\mathcal H$ from $n$ to $1$ labeled by $w$.
Thus the words strongly fixed by $n$ are precisely the labels of path in $\mathcal H$ from $n$ to $1$.

Note that since all letters label loops at $1$, if there is a directed cycle in $\mathcal H$ that is not a loop at $1$, then it is based at some $n\in N\setminus \{1\}$ and never passes through $1$.  Say that $x$ labels this cycle and let $w$ label a simple path from $n$ to $1$ in $\mathcal H$.  Suppose that the set of words strongly fixed by $n$ is $F_nX^*$.   Note that $x^mw$, for all $m \in \mathbb N$, labels a path from $n$ to $1$ no subpath of which reaches $1$.   Thus the words $x^mw$ are all strongly fixed by $n$, and additionally they all have no proper prefix strongly fixed by $n$.  Thus  $x^mw\in F_nX^\ast$ implies $x^mw\in F_n$ for all $m\geq 1$.  Therefore, no finite $F_n$ can satisfy the condition required.

Conversely, assume $\mathcal H$ contains no directed cycle except the loops at $1$ and let $n \in N$. If no words are strongly fixed by $n$, then $F_n=\emptyset$ suffices. Otherwise, $n$ is a vertex in $\mathcal H$. Consider all the distinct simple directed paths from $n$ to $1$ in $\mathcal H$; as $\mathcal H$ is finite, there are only finitely many, denote their labels by $w_1, \ldots, w_k$. As the only cycles in $\mathcal H$ are the loops at $1$, any path from $n$ to $1$ in $\mathcal H$ is a simple path followed by any number of such loops. Thus the set of the labels of  paths from $n$ to $1$ (i.e., of strongly fixed words) is exactly $\{w_1, \ldots, w_k\}X^\ast$, and so $F_n=\{w_1, \ldots, w_k\}$ suffices.
\end{proof}

\section{Nekrashevych algebras}\label{s:na}
Recall that a $\ast$-algebra over a field $K$ with an automorphism $\sigma$ satisfying $\sigma^2=1$ is a $K$-algebra $A$ with an involution $\ast$ satisfying $(ab)^\ast =b^\ast a^\ast$, $(a^\ast)^\ast=a$ and $(ca)^\ast = \sigma(c)a^\ast$.
A contracted inverse semigroup algebra $K_0S$ is always a $\ast$-algebra via $\left(\sum_{s\in \S} c_ss\right)^\ast = \sum_{s\in \S} \sigma(c_s)s^\ast$.  From now on we tacitly assume that $\sigma$ is the identity (although it is usual to use complex conjugation over $\mathbb C$).

Let $G$ be a self-similar group over the finite alphabet $X$.
Nekrashevych originally introduced an algebra associated to $G$ and $X$ in the $C^\ast$-algebra setting~\cite{Nekcstar}, but then later studied the algebraic version~\cite{Nekrashevychgpd}.  Further work on Nekrashevych algebras can be found in~\cite{nonhausdorffsimple,ExPadKatsura,simplicity}.

The \emph{Nekrashevych algebra} $\N{K}{G}{X}$ of $(G,X)$ with coefficients in $K$ is the $\ast$-algebra over $K$ with generating set $X\cup G$ and relations:
\begin{itemize}
\item [(G)] $g^\ast =g^{-1}$ and $g\cdot h=gh$ for $g,h\in G$;
\item [(SS)] $gx = g(x)g|_x$ for $g\in G$, $x\in X$;
\item [(CK1)] $y^\ast x=\delta_{x,y}$ for $x,y\in X$;
\item [(CK2)] $\sum_{x\in X} xx^\ast=1$.
\end{itemize}

Observe that (SS) and (CK2), in the presence of (CK1), can be replaced by the single family of relations:
\begin{itemize}
\item [(CKSS)] $g=\sum_{x\in X}g(x)g|_x x^\ast$, for $g\in G$.
\end{itemize}
Indeed, (CK2) is the special case of (CKSS) with $g=1$ and
(SS) follows from computing $gx$ using (CK1) and (CKSS).  Conversely, assuming (SS) and (CK2), we have that $g= g\sum_{x\in X} xx^\ast = \sum_{x\in X}g(x)g|_xx^\ast$.

It is well known that (CK2) implies the relation $1=\sum_{w\in X^k}ww^\ast$ for any $k\geq 0$ by a straightforward induction argument. For if $1=\sum_{u\in X^k}uu^\ast$, then \[1=\sum_{u\in X^k}uu^\ast=\sum_{u\in X^k}u\left(\sum_{x\in X}xx^\ast\right) u^\ast = \sum_{w\in X^{k+1}}ww^\ast.\]

Notice that (G), (SS) and (CK1) define the contracted semigroup algebra $K_0S$ where $S$ is the associated inverse semigroup from above.  The ideal $I$ of $K_0S$ generated by the element $1-\sum_{x\in X}xx^\ast$ is called the \emph{Cuntz-Krieger ideal}. The Nekrashevych algebra is then $K_0S/I$.

Notice that when $G$ is trivial, one obtains the classical Leavitt algebra, which is simple~\cite{Leavitt,LeavittBook}.  It is thus natural to investigate for which self-similar groups the Nekrashevych algebra is simple.

We remark that $\N{K}{G}{X}$ is naturally a $\mathbb Z$-graded algebra where the homogeneous component of degree $n$ is spanned by the $ugv^\ast$ with $|u|-|v|=n$. The degree zero component of this algebra is studied in~\cite{Nekrashevychgpd}.  It can be viewed as the direct limit $\varinjlim M_{X^n}(KG)$ via the homomorphisms $M_{X^n}(KG)\to M_{X^{n+1}}(KG)$ given by \[gE_{u,v}\longmapsto \sum_{x\in X}g|_xE_{ug(x),vx}\] where $u,v\in X^n$ and $E_{w,z}$ is the elementary matrix unit indexed by the words $w,z$.  This directed system consists of surjective homomorphisms when $G$ is self-replicating, cf.~\cite{Nekrashevychgpd}.  The isomorphism takes $ugv^\ast$ to $gE_{u,v}$. See~\cite{Nekrashevychgpd} for details.

Nekrashevych proved in the $C^\ast$-algebra context that the algebra associated to a contracting group is finitely presented~\cite[Theorem~4.2]{Nekcstar}.  We include his result here,  with a more detailed proof, for the algebraic setting.

\begin{Thm}[Nekrashevych]
Let $G$ be a contracting self-similar group over the alphabet $X$ with nucleus $N$.  Then $\N{K}{G}{X}$ is the $\ast$-algebra over $K$ with generators $X\cup N$ and the following defining relations.
\begin{itemize}
\item [(N)] $n^\ast =n^{-1}$ for $n\in N$ and $n\cdot n'=nn'$ if $n,n',nn'\in N$;
\item [(CK1)] $y^\ast x=\delta_{x,y}$ for $x,y\in X$;
\item [(CKSN)] $n=\sum_{x\in X} n(x)n|_xx^\ast$ for $n\in N$.
\end{itemize}
In particular, $\N{K}{G}{X}$ is finitely presented.
\end{Thm}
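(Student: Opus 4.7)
The plan is to let $A$ denote the $\ast$-algebra defined by the presentation in the statement (generators $X \cup N$ and relations (N), (CK1), (CKSN)) and exhibit mutually inverse $\ast$-algebra homomorphisms between $A$ and $\N{K}{G}{X}$. One direction is free: since (N) is an instance of (G), and (CKSN) is an instance of (CKSS), the relations of $A$ all hold in $\N{K}{G}{X}$, so the inclusions $X \hookrightarrow X$ and $N \hookrightarrow G$ extend to a $\ast$-homomorphism $\varphi \colon A \to \N{K}{G}{X}$.

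The substance lies in building the inverse $\psi \colon \N{K}{G}{X} \to A$. I would set $\psi(x) = x$ for $x \in X$, and for $g \in G$ define
\[
\psi(g) \;=\; \sum_{w \in X^k} g(w)\, g|_w\, w^\ast,
\]
where $k$ is any integer with $g|_{X^k} \subseteq N$ (which exists by contractingness). First I would check that $\psi(g)$ is independent of $k$: it suffices to compare $k$ with $k+1$. Splitting $w' = wx$ and using $(g|_w)|_x = g|_{wx}$, the $(k+1)$-sum rearranges as $\sum_w g(w) \bigl(\sum_x g|_w(x)\,(g|_w)|_x\, x^\ast\bigr) w^\ast$, and (CKSN) applied to $g|_w \in N$ (valid since $N|_X = N$) collapses the inner sum to $g|_w$, recovering the $k$-sum.

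Next I would verify that $\psi$ respects the Nekrashevych relations (G), (SS), (CK1), (CK2). Relation (CK1) is tautological, and (CK2) is (CKSN) applied to $1 \in N$. The relation (SS), $\psi(g) x = g(x) \psi(g|_x)$, follows by choosing $k \geq 1$ with $g|_{X^k} \subseteq N$, annihilating terms via (CK1), and using $g(xv) = g(x) g|_x(v)$. The involutive relation $\psi(g)^\ast = \psi(g^{-1})$ is a direct computation using $(g|_u)^{-1} = g^{-1}|_{g(u)}$. The crux is multiplicativity $\psi(g)\psi(h) = \psi(gh)$: using contractingness, pick $k$ large enough that $g|_{X^k}$, $h|_{X^k}$, and $(gh)|_{X^k}$ all lie in $N$. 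Expanding the product, the inner factor $u^\ast h(v)$ collapses to $\delta_{u,h(v)}$ via (CK1), leaving $\sum_{v \in X^k} g(h(v))\,\bigl(g|_{h(v)} \cdot h|_v\bigr)\,v^\ast$. Now $g|_{h(v)}$ and $h|_v$ are both in $N$, and their product in $G$ is the section $(gh)|_v$, which lies in $N$ by the choice of $k$; relation (N) then identifies $g|_{h(v)} \cdot h|_v$ with the generator $(gh)|_v$ of $A$, producing $\psi(gh)$.

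The main obstacle is precisely this last step: the product in $G$ of two arbitrary nucleus elements may fall outside $N$, so (N) alone appears too weak to compute arbitrary products in $A$. The contracting property saves the day by guaranteeing that after passing to sections at a uniformly chosen depth $k$, everything lands back in $N$, at which point (N) suffices. Once $\psi$ is known to be a $\ast$-homomorphism, checking that $\varphi \circ \psi$ and $\psi \circ \varphi$ are the identity on generators is routine, yielding the isomorphism $A \cong \N{K}{G}{X}$. Since $|X|$ and $|N|$ are finite, the presentation of $A$ is finite on both generators and relations, establishing the final clause.
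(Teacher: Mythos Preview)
Your proposal is correct and follows essentially the same route as the paper: both construct the easy map $A\to\N{K}{G}{X}$ from the observation that (N) and (CKSN) are instances of (G) and (CKSS), then build the inverse by sending $g\in G$ to $\sum_{w\in X^k}g(w)g|_ww^\ast$ for $k$ large, verify well-definedness by comparing depths $k$ and $k+1$ via (CKSN), and check (G), (SS), (CK1), (CK2) with the key multiplicativity step handled by choosing $k$ so that $g|_{X^k}$, $h|_{X^k}$, $(gh)|_{X^k}$ all lie in $N$ and invoking (N). The only difference is cosmetic (you name the maps $\varphi,\psi$ in the opposite order to the paper).
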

\begin{proof}
For the purposes of this proof we recall that $1\in N=N^{-1}$ and that $N$ is closed under sections.  Let  $A$ be the $\ast$-algebra over $K$ with generators $X\cup N$ defined by the relations (N), (CK1) and (CKSN).

If $g\in G$ and $m\geq 0$ is such that $g|_{X^m}\in N$, then in $\N{K}{G}{X}$
\begin{equation}\label{eq:express.g}
g = g\sum_{w\in X^m}ww^\ast = \sum_{w\in X^m}g(w)g|_ww^*
\end{equation}
 by (CK2) and (SS), and $g|_w\in N$ for each $w\in X^m$.  Thus $\N{K}{G}{X}$ is generated by $X$ and $N$ as a $\ast$-algebra over $K$.
Clearly (N) is a consequence of (G).  Note that (CKSN) follows from (CKSS).  Thus $\N{K}{G}{X}$ satisfies the relations (N), (CK1) and (CKSN) and so there is a natural surjective homomorphism $A\to \N{K}{G}{X}$ that is the identity on $N$ and $X$.

For $g\in G$ and $m\geq 0$ with $g|_{X^m}\subseteq N$, define a map $\varphi\colon G\to A$ by  $\varphi(g)=\sum_{w\in X^m} g(w)g|_ww^\ast$.  Note that this element is independent of the choice of $m$, for
\begin{align*}
\sum_{w\in X^m} g(w)g|_ww^\ast&=\sum_{w\in X^m} g(w)\sum_{x\in X}g|_w(x)(g|_w)|_xx^\ast w^\ast\\ &=\sum_{w\in X^m}\sum_{x\in X} g(w)g|_w(x)(g|_w)|_xx^\ast w^\ast= \sum_{v\in X^{m+1}}g(v)g|_vv^\ast
\end{align*}
by (CKSN) and (N) as $g|_w\in N$ for all $w\in X^m$.  Notice that $\varphi$ is the identity on $N$ (by taking $m=0$).
We extend $\varphi$ to $G\cup X$ by sending $X$ to itself via the identity map; we check that this induces a well-defined homomorphism from $\N{K}{G}{X}$ to $A$ by verifying that the relations of $\N{K}{G}{X}$ are preserved.  Then $\varphi$ will be inverse to the homomorphism of the previous paragraph by \eqref{eq:express.g}.

Clearly (CK1) is preserved.  Notice that $1\in N$, and so (CKSN) implies (CK2) by taking $n=1$.
We now check that (G) is preserved, that is, $\varphi$ is a $\ast$-homomorphism on $G$.  First of all, if $g|_{X^m}\subseteq N$, then also $g^{-1}|_{X^m}\subseteq N$ and $\varphi(g)=\sum_{w\in X^m}g(w)g|_ww^\ast$, and so using (N), we have that
\begin{align*}
(\varphi(g))^\ast &= \sum_{w\in X^m}w(g|_w)^{-1}g(w)^\ast =\sum_{w\in X^m}w(g^{-1})|_{g(w)}g(w)^\ast\\
& =\sum_{v\in X^m}g^{-1}(v)(g^{-1})|_vv^\ast=\varphi(g^{-1}).
\end{align*}

Next let $g,h\in G$ and choose $m\geq 0$ so that $g|_{X^m}$, $h|_{X^m}$ and $(gh)|_{X^m}$ all belong to $N$.  Then
\begin{equation*}
\varphi(g) \varphi(h) =\sum_{w\in X^m}g(w)g|_ww^\ast\cdot \sum_{v\in X^m}h(v)h|_vv^\ast = \sum_{v\in X^m}      g(h(v))g|_{h(v)}h|_vv^\ast
\end{equation*}
But $(gh)|_v = g|_{h(v)} h|_v$ and since $g|_{h(v)},h|_v, (gh)|_v\in N$, we may use (N) to rewrite the right hand side of the above equation as
$\sum_{v\in X^m}  g(h(v))(gh)|_vv^\ast= \varphi(gh)$, verifying (G).

Finally, we verify (SS).  Let $g\in G$ and choose $m\geq 1$ so that $g|_{X^m}\subseteq N$.  Note that, for $x\in X$, we have that $(g|_x)|_{X^{m-1}}\subseteq N$.  Then
\begin{align*}
\varphi(gx) &= \sum_{w\in X^m}g(w)g|_w w^\ast x  = \sum_{u\in X^{m-1}}g(xu)g|_{xu}u^\ast
= \sum_{u\in X^{m-1}}g(x)g|_x(u)(g|_x)|_uu^\ast\\ & = g(x)\sum_{u\in X^{m-1}}g|_x(u)(g|_x)|_u u^\ast = \varphi(g(x) g|_x).
\end{align*}

Thus $A$ satisfies (SS) and we conclude $A\cong \N{K}{G}{X}$, as required.
\end{proof}

In particular, $\N{K}{G}{X}=\N{K}{\langle N\rangle}{X}$.  Since it is decidable if an automaton is a nucleus of a self-similar group and the algebra depends only on the nucleus, for algorithmic problems concerning Nekrashevych algebras  of contracting groups it is best to take as input the nucleus.  Moreover, the nucleus of a contracting group can be computed from any automaton generating the group, as we saw earlier.

\section{Simplicity of Nekrashevych algebras}\label{s:simple}
Our goal is to use Theorem~\ref{t:cong-free.simple} to study simplicity of Nekrashevych algebras.  For this section $G$ will always be a self-similar group over the finite alphabet $X$ and $S$ will be the associated inverse semigroup.

\subsection{The tight and singular ideals}
The next proposition identifies the Cuntz-Krieger ideal of $K_0S$ as  the tight ideal $\tightid{K}{S}$, and thus
$\tight S\cong K_0S/\tightid K S \cong \N K G X$.  This also follows from~\cite[Section~6.3]{gridealSteinberg}, which proves the corresponding statement for the more general class of Exel-Pardo algebras.  Our result provides some additional information that gives a hands-on description of $\tightid K S$.

\begin{Prop}
\label{p:tightideal}
Let $G$ be a self-similar group over the finite alphabet $X$ and $S$ the associated inverse semigroup. Then the following are equivalent for any $a \in K_0S$:
\begin{enumerate}
\item $a$ is in the Cuntz-Krieger ideal;
\item $a \in \tightid{K}{S}$;
\item any $\ol w \in X^\omega$ has a prefix $w \in X^\ast$ with $aw=0$;
\item there exists $N\geq 0$ with $aX^N=0$.
\end{enumerate}
\end{Prop}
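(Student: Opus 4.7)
My plan is to prove the cycle $(1) \Rightarrow (2) \Rightarrow (4) \Rightarrow (1)$ together with the equivalence $(3) \Leftrightarrow (4)$. I will use throughout the description of $E(S)$ given in Section~\ref{s:ssg}: every nonzero idempotent of $S$ is some $ww^\ast$ with $w\in X^\ast$, and $ww^\ast\leq uu^\ast$ if and only if $u$ is a prefix of $w$. I will also use the monotonicity $aX^N=0 \Rightarrow aX^k=0$ for $k\geq N$, which follows from factoring $v\in X^k$ as $v=uv'$ with $u\in X^N$.

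The equivalence $(3)\Leftrightarrow(4)$ is a compactness argument. $(4)\Rightarrow(3)$ is immediate by taking the length-$N$ prefix of any $\bar w\in X^\omega$. For $(3)\Rightarrow(4)$, the basic clopen sets $wX^\omega$ (over all prefixes $w$ with $aw=0$) cover the compact space $X^\omega$, and a finite subcover yields a uniform bound $N$ on the lengths of the relevant $w$'s; any $v\in X^N$ extends to an infinite word lying in some $wX^\omega$ of the subcover, forcing $w$ to be a prefix of $v$ and hence $av=0$.

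For $(4)\Rightarrow(1)$, the key is the identity proved just after (CKSS) in Section~\ref{s:na}, which by induction on $N$ yields
\[
1-\sum_{w\in X^N}ww^\ast \;=\; \sum_{k=0}^{N-1}\,\sum_{u\in X^k}u\Bigl(1-\sum_{x\in X}xx^\ast\Bigr)u^\ast \;\in\; I.
\]
Given (4) with witness $N$, we have $a\sum_{w\in X^N}ww^\ast=0$, so $a=a\bigl(1-\sum_{w\in X^N}ww^\ast\bigr)\in I$. For $(1)\Rightarrow(2)$, the idempotents $\{xx^\ast:x\in X\}$ are pairwise orthogonal by (CK1) (since $x^\ast y=\delta_{x,y}$), so $\prod_{x\in X}(1-xx^\ast)=1-\sum_{x\in X}xx^\ast$; combined with the observation at the end of Section~\ref{s:ssg} that $\{xx^\ast:x\in X\}$ covers $1$, this exhibits the generator of $I$ as a generator of $\tightid{K}{S}$, so $I\subseteq\tightid{K}{S}$.

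The remaining implication $(2)\Rightarrow(4)$ is where the real work lies, and I expect it to be the main obstacle. I will first observe that the set of $a\in K_0S$ satisfying (4) is a two-sided ideal: closure under left multiplication is obvious, while for right multiplication, if $aX^N=0$ and $b=\sum_{s}c_s s$ with $s=u_sg_sv_s^\ast$, then $sw$ is either $0$ or a word of length $|u_s|+|w|-|v_s|$, so choosing $M\geq N+\max_{s\in\supp b}(|v_s|-|u_s|)$ gives $(ab)X^M=0$. Thus it suffices to show each generator $a=\prod_{i=1}^k(e-f_i)$ of $\tightid K S$ satisfies (4), where $e=uu^\ast$ and $f_i=w_iw_i^\ast$ with $u$ a prefix of each $w_i$. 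A compactness (or K\"onig's lemma) argument shows that if no uniform depth $N$ worked, one could find an infinite word through $u$ none of whose long finite prefixes has any $w_i$ as a prefix, contradicting the cover property when applied to such a prefix $v$ with $|v|>\max_i|w_i|$. Picking $N$ larger than $\max_i|w_i|$ and than the cover depth, I will then verify $av=0$ for every $v\in X^N$: the factors $e-f_i$ commute (as differences of commuting idempotents), so it suffices that one of them kills $v$; if $u$ is not a prefix of $v$, then $ev=0$ and each $f_iv=0$, so $(e-f_i)v=0$, while if $u\leq v$ then some $w_i\leq v$, giving $f_iv=v=ev$ and $(e-f_i)v=0$. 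This closes the cycle.
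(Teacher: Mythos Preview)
Your proof takes essentially the same route as the paper's: the same cycle of implications, with $(2)\Rightarrow(4)$ handled by showing that condition (4) cuts out a two-sided ideal and then checking the generators of $\tightid K S$. There is one imprecision in the right-ideal step. The claim ``$sw$ is either $0$ or a word of length $|u_s|+|w|-|v_s|$'' is not accurate: even when $v_s$ is a prefix of $w$ one gets $sw=u_sg_s(w')g_s|_{w'}\in X^\ast G$ rather than a bare word, and more importantly your bound on $M$ does not rule out $|w|<|v_s|$, in which case $sw$ can be a nonzero element $u_sg_sz^\ast$ with $v_s=wz$. Your choice $M\geq N+\max_s(|v_s|-|u_s|)$ does still give $(ab)X^M=0$---in the case $|w|<|v_s|$ one deduces $|u_s|>N$, whence $au_s=0$---but this third case needs to be addressed explicitly; the paper avoids it altogether by simply taking $L=N+|v|$ for each individual $s=ugv^\ast$. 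Finally, the preliminary K\"onig/compactness paragraph in your generator verification is superfluous: taking $N>\max_i|w_i|$ already suffices, as your own case analysis at the end shows.
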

\begin{Proof}
We begin by showing that (3) and (4) are equivalent. The implication $(4) \implies (3)$ is trivial. For the converse, consider the subgraph $T$ of the tree $T_X$ induced on the vertices $w$ with $aw \neq 0$. As $aw \neq 0$ implies $au \neq 0$ for any prefix $u$ of $w$, we have that $T$ is a subtree containing $1$, and it is locally finite since $X$ is finite. If $T$ is infinite, that is, if (4) is not satisfied, then, by K\H onig's lemma, $T$ contains a right infinite simple path starting at $1$ whose label is a word $\ol w \in X^\omega$, all of whose prefixes $w$ belong to $T$ and hence satisfy $aw\neq 0$.

We proceed by showing $(1)  \implies (2) \implies (4)  \implies (1)$.
Recall that the Cuntz-Krieger ideal is generated by the element $1 - \sum_{x \in X}xx^\ast$. This is also a generator of $\tightid K S$, as the idempotents $\{xx^\ast: x \in X\}$ cover $1$ and are pairwise orthogonal,  whence $1-\sum_{x \in X}xx^\ast = \prod_{x \in X} (1-xx^\ast)\in \tightid K S$. This shows that $(1) \implies (2)$.

We next show that the elements satisfying (4) form an ideal. If $a,b \in K_0S$ are such that $aX^{N}=bX^{L}=0$, then $(a+b)X^{N+L}=0$.  Also, for any $c \in K_0S$, $caX^{N}=0$, and so the set of elements satisfying (4) form a left ideal. What remains to show is that if $aX^N=0$, then, for any $s \in S$, $asX^L=0$ for some $L$. Put $s=ugv^\ast$, and let $L=N+|v|$. Then, for  $w \in X^L$, either $v^\ast w=0$, and so $asw=0$, or $w=vz$ with $z \in X^{N}$, in which case $asw=augz=aug(z)g|_z=0$,
as $|ug(z)|=|uz|\geq N$.

It remains to show that the generators of $\tightid K S$ satisfy (4). Let $v \in X^\ast$ and $F$ a finite cover of $vv^\ast$. Put $N=\max\{|u|: uu^\ast \in F\}$ and let $w \in X^N$. If $w \notin vX^\ast$, then $vv^\ast w=0$, and so $uu^\ast w=0$ for all $u\in F$, whence $\prod_{u \in F}(vv^\ast-uu^\ast)w=0$. If $w \in vX^\ast$, then $ww^\ast \leq vv^\ast$, and so there exists $u \in F$ with $uu^\ast ww^\ast \neq 0$. Since $|w| \geq |u|$, we must have $uu^\ast ww^\ast=ww^\ast$.
Then $(vv^\ast -uu^\ast)w=(vv^\ast-uu^\ast)ww^\ast w=(ww^\ast-ww^\ast)w=0$, and so
 $\prod_{u \in F}(vv^\ast-uu^\ast)w=0$. This proves $(2) \implies (4)$.

For the last part, assume now that $aX^N=0$ for some $N\geq 0$. Then since $1-\sum_{w \in X^{N}}ww^\ast$ is in the Cuntz-Krieger ideal, so is
$a=a(1-\sum_{w \in X^{N}}ww^\ast)$. This shows that $(4) \implies (1)$, completing the proof.
\end{Proof}

In other words, $a\in K_0S$ belongs to the Cuntz-Krieger ideal if and only if every infinite word has a prefix $w$ with $aw=0$.  We say that an infinite word $\ol w$ witnesses that $a\notin \tightid{K}{S}$ if $aw\neq 0$ for all prefixes $w$ of $\ol w$.   Note that an element $a\in K_0S$ is singular if and only if, for each $u\in X^\ast$, there exists $v\in X^\ast$ with $auv=0$.  Indeed, if $uu^\ast\in E(S)\setminus \{0\}$, then $ww^\ast\leq uu^\ast$ if and only if $w=uv$ with $v\in X^\ast$, and $auv(uv)^\ast =0$ if and only if $auv=0$.  This characterization of the singular ideal, in conjunction with Proposition~\ref{p:tightideal}(4), provides an alternate argument that $\tightid K S\subseteq \sing{K}{S}$  in this setting.
Thus Theorem~\ref{t:cong-free.simple} has the following corollary for Nekrashevych algebras, in light of Proposition~\ref{p:tightideal}.

\begin{Thm}
\label{t:simplicity}
Let $G$ be a self-similar group over a finite alphabet $X$ and $K$ a field.  Let $S$ be the associated inverse semigroup. Then $\N{K}{G}{X}=K_0S/\tightid K S$ has a unique maximal ideal consisting of the cosets of the elements of \[\sing K S =\{a\in K_0S: \forall u\in X^\ast, \exists v\in X^\ast, auv=0\}.\]  In particular, $\N{K}{G}{X}$ is simple if and only if $\tightid K S=\sing K S$.
\end{Thm}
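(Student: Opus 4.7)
The plan is to assemble this statement from three ingredients already present in the excerpt: Theorem~\ref{t:cong-free.simple} (the abstract simplicity criterion for congruence-free inverse semigroups), Proposition~\ref{p:tightideal} (identifying the Cuntz-Krieger ideal with $\tightid{K}{S}$, so that $K_0S/\tightid{K}{S}\cong \N{K}{G}{X}$), and the concrete description of $E(S)$ in terms of prefixes that was recorded in Section~\ref{s:ssg}.

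First, I would invoke the fact stated in Section~\ref{s:ssg} that the inverse semigroup $S$ associated to a self-similar group action is congruence-free (this is \cite[Proposition~6.2]{LawsonCorrespond}). This makes Theorem~\ref{t:cong-free.simple} applicable and yields that $\sing{K}{S}$ is the unique maximal ideal of $K_0S$ containing $\tightid{K}{S}$, with $K_0S/\tightid{K}{S}$ simple if and only if $\sing{K}{S}=\tightid{K}{S}$. Next, Proposition~\ref{p:tightideal} identifies $\tightid{K}{S}$ with the Cuntz-Krieger ideal, so the canonical $\ast$-algebra surjection $K_0S\twoheadrightarrow \N{K}{G}{X}$ has kernel exactly $\tightid{K}{S}$; via the ideal correspondence for quotients, the ideals of $\N{K}{G}{X}$ are in inclusion-preserving bijection with ideals of $K_0S$ containing $\tightid{K}{S}$. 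Hence the unique maximal ideal of $\N{K}{G}{X}$ is the image of $\sing{K}{S}$ under the quotient map, i.e., the set of cosets $a+\tightid{K}{S}$ with $a\in\sing{K}{S}$, and simplicity is equivalent to $\sing{K}{S}=\tightid{K}{S}$.

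The only remaining work is to rewrite the abstract definition of $\sing{K}{S}$ in the prefix-friendly form given in the statement; this is essentially the remark already made in the paragraph immediately preceding the theorem. The nonzero idempotents of $S$ are exactly the elements $ww^\ast$ for $w\in X^\ast$, and $ww^\ast\leq uu^\ast$ if and only if $u$ is a prefix of $w$, i.e.\ $w=uv$ for some $v\in X^\ast$. Moreover, $a\cdot uv(uv)^\ast=0$ if and only if $a\cdot uv=0$, since $(uv)^\ast\cdot uv=1$ in $S$ (left multiplication by a word in $X^\ast$ has full domain $M$). Quantifying over $u\in X^\ast$ and choosing the witness $f=uv(uv)^\ast\leq uu^\ast$ then produces precisely the description $\sing{K}{S}=\{a\in K_0S:\forall u\in X^\ast,\exists v\in X^\ast,\ auv=0\}$ in the statement, completing the proof. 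There is no genuine obstacle here: all the substantive content is packaged in Theorem~\ref{t:cong-free.simple} and Proposition~\ref{p:tightideal}, and what is left is simply to translate the abstract meet-semilattice condition into the explicit prefix condition available in $S$.
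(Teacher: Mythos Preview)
Your proposal is correct and follows essentially the same argument as the paper: the paper presents this theorem as a direct corollary of Theorem~\ref{t:cong-free.simple} (using that $S$ is congruence-free) together with Proposition~\ref{p:tightideal}, after noting in the preceding paragraph the prefix reformulation of singularity that you spell out. There is nothing missing or different in your approach.
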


Although the most useful characterization of the ideal $\sing K S /\tightid K S$ is the one above, it can also be described in a more natural way. The idea is reminiscent of the description of the unique simple quotient of the Nekrashevych $C^\ast$-algebra found in~\cite{Nekcstar}.

The action of $S$ on $X^\omega$ by partial maps can be extended naturally to a representation of $K_0S$ on the $K$-vector space $KX^\omega$ with basis $X^\omega$. (A previously undefined action of $s \in S$ on $w \in X^\omega$ is defined as $0$.) We denote the action of $a \in K_0S$ on $b \in KX^{\omega}$ by $ab$.
Notice that $\sum_{x \in X} xx^\ast$ fixes any infinite word $\ol w$, as denoting its first letter by $x_1$ we have
\[\big(\sum_{x \in X} xx^\ast\big)\ol w=x_1x_1^\ast(\ol w)=\ol w.\]
Denote the kernel of the representation by $I$. It then follows that $I$ contains the Cuntz-Krieger ideal, therefore we also obtain a representation of $\N K G X$ on $KX^\omega$.

\begin{Thm}
Let $G$ be a self-similar group over a finite alphabet $X$ and $K$ a field, and let $S$ be the associated inverse semigroup. The kernel $I$ of the representation of $K_0S$ on $KX^\omega$ is $\sing K S$. Consequently, the kernel of the action of $\N K G X$ on $KX^\omega$ is $\sing K S/\tightid K S$. Therefore $\N K G X$ is simple if and only if its representation on $KX^\omega$ is faithful.
\end{Thm}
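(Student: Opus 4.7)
The plan is to show that the kernel $I$ of the representation $\pi\colon K_0S\to\mathrm{End}(KX^\omega)$ equals $\sing K S$ via two inclusions; the consequences for $\N K G X$ will then follow formally.

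For $I\subseteq\sing K S$, I would observe that $I$ is a proper ideal (since $1\cdot\ol z=\ol z\neq 0$ for any $\ol z$) containing $\tightid K S$ (noted in the preceding text), and apply Theorem~\ref{t:cong-free.simple}, which identifies $\sing K S$ as the unique maximal ideal above $\tightid K S$.

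The reverse inclusion is the substantive part. Given $a=\sum_i c_i s_i\in\sing K S$ with $s_i=u_ig_iv_i^*$, I plan to study the zero locus $Z(a)=\{\ol z\in X^\omega:a\ol z=0\}$ and show $Z(a)=X^\omega$ by establishing (i) $Z(a)$ contains a dense subset of $X^\omega$ and (ii) $Z(a)$ is closed; then $Z(a)\supseteq\overline{\textrm{dense}}=X^\omega$. For (i), Theorem~\ref{t:simplicity} will supply, for each $u\in X^*$, some $v$ with $a(uv)(uv)^*=0$ in $K_0S$; since $(uv)(uv)^*$ acts as the identity on the cone $uvX^\omega$ and as zero outside it, this forces $a\ol z=0$ for every $\ol z\in uvX^\omega$, and the union of such clopen cones is a dense subset of $X^\omega$ contained in $Z(a)$. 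For (ii), I would argue via a ``collapse partition'': given $\ol z_n\to\ol z$ with $\ol z_n\in Z(a)$, define the partition $P_{\ol z}$ of $I_{\ol z}=\{i:v_i\text{ is a prefix of }\ol z\}$ by $i\sim j$ iff $s_i(\ol z)=s_j(\ol z)$, so that $a\ol z=0$ is equivalent to every $P_{\ol z}$-class having coefficient sum zero. The clopenness of each $v_iX^\omega$ and continuity of each $s_i$ on its clopen domain give, for $n$ large, that $I_{\ol z_n}=I_{\ol z}$ and $s_i(\ol z_n)\to s_i(\ol z)$ in the ultrametric topology on $X^\omega$. Whenever $s_i(\ol z)\neq s_j(\ol z)$, the distinctness of the limits forces $s_i(\ol z_n)\neq s_j(\ol z_n)$ for $n$ large, so (finitely many pairs) $P_{\ol z_n}$ eventually refines $P_{\ol z}$; vanishing of the $P_{\ol z_n}$-class sums (from $a\ol z_n=0$) then transfers to each $P_{\ol z}$-class as a disjoint union of $P_{\ol z_n}$-classes, giving $a\ol z=0$.

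The consequences will then follow by passing to $\N K G X=K_0S/\tightid K S$ (the kernel becomes $\sing K S/\tightid K S$) and invoking Theorem~\ref{t:simplicity} to equate simplicity with faithfulness. The main obstacle will be step (ii): the collapse partition $P_{\ol z}$ is not a continuous function of $\ol z$, but it satisfies a one-sided ``semicontinuity'' in that distinct values of the $s_i(\ol z)$ remain distinct under small perturbations---which is exactly the asymmetry needed to transfer vanishing from the approximating sequence to the limit.
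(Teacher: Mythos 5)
Your proposal is correct, and both halves that you treat formally (the inclusion $I\subseteq \sing K S$ via the unique maximal ideal above $\tightid K S$, and the passage to $\N K G X$) coincide with the paper's treatment. The substantive inclusion $\sing K S\subseteq I$ is where you diverge. The paper argues pointwise at a fixed $\ol w\in X^\omega$: writing $w_n$ for its prefixes, it forms the decreasing chain of equivalence relations $\equiv_n$ on $\{s\in\supp a: s(\ol w)\neq 0\}$ given by agreement of the images $s(w_n)$, notes that the chain stabilizes at some $N$ with $\equiv_N$ contained in the relation of agreement on all of $\ol w$, then uses singularity to get $u$ with $aw_Nu=0$ and transfers the vanishing of coefficient sums from the classes of $sw_Nu=tw_Nu$ up to the coarser partition by equal infinite images. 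Your decomposition into ``the zero locus contains a dense union of cones'' plus ``the zero locus is closed'' is a genuinely different packaging of the same two ingredients: singularity supplies the dense family of cones $uvX^\omega$ on which $a$ acts as zero (the paper's $w_Nu$ is exactly one such cone sitting close to $\ol w$), and your one-sided semicontinuity of the collapse partition replaces the stabilization of the chain $\equiv_n$. Both rest on the same prefix-compatibility facts --- that $ugv^\ast$ acts continuously on the clopen cone $vX^\omega$ and that distinct images remain distinct under small perturbation --- but the paper's version is purely combinatorial, never invoking the topology of $X^\omega$ explicitly, while yours isolates the topological mechanism (upper semicontinuity of the partition along convergent sequences) as a reusable lemma and is arguably more transparent about why the asymmetry between refinement directions is exactly what is needed.
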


\begin{Proof}
Since $\tightid K S \subseteq I$, by Theorem~\ref{t:simplicity} we obtain $I \subseteq \sing K S$. For the reverse inclusion, let $a=\sum_{s \in S} a_s s \in \sing K S$, and let $\ol w \in X^\omega$. We will show that $a \ol w=0$. Note that $a\ol w=0$ if and only if
\begin{equation}
\label{eq:bigger}
\sum_{s(\ol w)=t( \ol w)}a_s=0
\end{equation}
 for all $t \in \supp a$ with $t( \ol w) \neq 0$.

Denote the length $n$ prefix of $\ol w$ by $w_n$, and let $k$ be such that $k \geq |v|$ for any $ugv^\ast \in \supp a$. For any $ugv^\ast \in \supp a$ we have $(ugv^\ast)(\ol w) \neq 0$ if and only if $v$ is a prefix of $w_k$, if and only if $(ugv^\ast)(w_n) \neq 0$ for some $n\geq k$, which is if and only if $(ugv^\ast)(w_n) \neq 0$ for any $n \geq k$.

Notice that for any $s=ugv^\ast \in S$ and $z \in X^\ast$ we have $svz=ugz=ug(z)g|_z=s(vz)g|_z.$
So for any $w'\in X^\ast \cup X^\omega$, we have
\begin{equation}
\label{e:suffix}
s(vzw')=(svz)(w')=s(vz)g|_z(w').
\end{equation}
Thus if $w \in X^\ast$ is such that $s(w) \neq 0$ (i.e., $w$ is of the form $vz$), and $w'$ is a finite word, then $s (ww')$ is of the form $s(w) w''$ with $|w''|=|w'|$. It follows that if $s,t\in S$ are such that $s(w),t(w)\neq 0$ and  $s(ww')=t (ww')$, then $s(w)=t(w)$.

By~\eqref{e:suffix}, the action of $s=ugv^\ast$ on an infinite word $vzx_1x_2 \cdots$, where $z \in X^\ast, x_i \in X$, is given by
\[s(vzx_1x_2 \cdots)=s(vz)g|_{z}(x_1)g|_{zx_1}(x_2)g|_{zx_1x_2}(x_3) \cdots,\]
whereas
\[s(vzx_1 \cdots x_n)=s(vz)g|_{z}(x_1)g|_{zx_1}(x_2) \cdots g|_{zx_1 \cdots x_{n-1}}(x_n).\]
Thus if $w$ is such that $s(w)\neq 0$, then there exist $y_i \in X$ with
\[s(wx_1x_2 \cdots)=s(w)y_1y_2 y_3\cdots\] and
\[s(wx_1 \cdots x_n)=s(w)y_1 \cdots y_{n}.\]
It follows that if $s,t \in S$ are such that $s(w),t(w)\neq 0$ and $s(wx_1\cdots x_n)=t(wx_1\cdots x_n)$ for all $n$, then $s(wx_1x_2 \cdots)=t(wx_1x_2 \cdots)$.

Let $A=\{s \in \supp a: s(w_k) \neq 0\}$. For $n \geq k$ define the equivalence relation $\equiv_n$ on $A$ by $s \equiv_n t$ whenever $s (w_n)=t (w_n) $, and put $s \equiv_\omega t$ whenever $s(\ol w)=t(\ol w)$. Notice that $s\equiv_{n+1} t$ implies $s\equiv_n t$ by the observation following~\eqref{e:suffix}.
Since $A$ is finite, the decreasing chain of equivalences $\equiv_k\ \supseteq\  \equiv_{k+1}\ \supseteq \ldots$ eventually stabilizes, say at some index $N$. Then $\equiv_N$ implies $s(w_n)=t(w_n)$ for all $n \geq N$. Hence $s(\ol w)=t(\ol w)$ by the end if the previous paragraph (with the choice of $w=w_N$), thus $\equiv_N$ is contained in $\equiv_\omega$.

Since $a \in \sing K S$, there exists a word $u \in X^\ast$ with $aw_Nu=0$. Note that for any $t \in \supp a$, we have $tw_Nu \neq 0$ if and only if $t \in A$.
Thus from $aw_Nu=0$ we obtain that
\begin{equation}
\label{eq:small}
\sum_{sw_Nu=tw_Nu}a_s=0
\end{equation}
holds for all $t \in A$.
Consider the equivalence relation on $A$ defined by $s \equiv t$ if and only if $sw_Nu=tw_Nu$.
Note that if $s \equiv t$, then $s (w_Nu)=(sw_Nu)( \epsilon)=(tw_Nu) ( \epsilon)=t (w_Nu)$ and hence, by the observation after \eqref{e:suffix}, $s (w_N)=t(w_N)$, that is, $s \equiv_N t$, and therefore $s \equiv_\omega t$. Thus the $\equiv_\omega$-classes are unions of $\equiv$-classes. By~\eqref{eq:small}, we have that the sum of the coefficients $a_s$ is $0$ on each $\equiv$-class, and thus they must sum to $0$ on each $\equiv_\omega$-class as well, implying~\eqref{eq:bigger}. This proves $a \ol w=0$. The last two statements are immediate.
\end{Proof}

\begin{Rem}
Self-similar groups and their associated inverse semigroups $S$ can be analogously defined over infinite alphabets, see~\cite[Section 6]{simplicity} for details. In this case, $\tightid K S=0$, and so the tight algebra $K_0S/\tightid K S$ coincides with $K_0S$, and is simple if and only if $\sing K S=0$. The above proof never uses the finiteness of $X$ and thus shows that the kernel of the action of $K_0S$ on $KX^\omega$ is $\sing K S$ for any nontrivial alphabet $X$. In particular $K_0S$ is simple if and only if its representation on $KX^\omega$ is faithful.
\end{Rem}

We move on to show that if $\sing K S\setminus \tightid K S$ is non-empty, then it must intersect the group algebra of $G$, and in the contracting case it must contain an element supported on the nucleus.

The following proposition is immediate from
~\cite[Propostion~6.4]{simplicity},
together with the observation that $\tightid{K}{S}$ is an ideal.

\begin{Prop}
\label{p:KMtoKG}
Put $M=X^\ast G \leq S$, and let $a \in KM$. Then $a$ is uniquely of the form $\sum_{u \in X^\ast} ua_u$ with $a_u \in KG$, and $a$ is singular if and only if each $a_u$ is singular.  In particular, if $a \in \sing{K}{S} \setminus \tightid{K}{S}$, then  $a_u \in KG \cap \sing{K}{S} \setminus \tightid{K}{S}$ for some $u$.
\end{Prop}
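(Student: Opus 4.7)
The plan is to establish the uniqueness of the decomposition, then the singularity equivalence via a grading argument, and finally derive the ``in particular'' claim as a quick contrapositive from $\tightid{K}{S}$ being an ideal.

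The uniqueness of $a = \sum_{u \in X^\ast} u a_u$ with $a_u \in KG$ is immediate because every element of $M = X^\ast G$ is uniquely a product $ug$ with $u \in X^\ast$ and $g \in G$, so $\{ug : u \in X^\ast,\ g \in G\}$ is a $K$-basis of $KM$. For the singularity equivalence, the direction ``each $a_u$ singular $\Rightarrow a$ singular'' is immediate from the fact that $\sing{K}{S}$ is a two-sided ideal of $K_0 S$: each $ua_u$ lies in $\sing{K}{S}$, and a finite sum of singular elements is singular.

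For the converse, I would use the natural $\mathbb Z$-grading on $K_0 S$ in which $ugv^\ast$ has degree $|u|-|v|$. By Theorem~\ref{t:simplicity}, $a \in \sing{K}{S}$ exactly when for every $w \in X^\ast$ there exists $v \in X^\ast$ with $awv = 0$. Writing $a = \sum_n a^{(n)}$ as a sum of homogeneous components, the identity $awv = \sum_n a^{(n)} wv$ decomposes into pieces of pairwise distinct degrees $n + |wv|$, so $awv = 0$ forces each $a^{(n)} wv = 0$, and hence each $a^{(n)}$ is singular. This reduces matters to a single homogeneous slice $a^{(n)} = \sum_{|u|=n} u a_u$. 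In this case the expansion $u a_u wv = \sum_{g \in \supp a_u} c_{u,g}\, u g(wv)\, g|_{wv}$ produces elements of $M$ whose word parts begin with the specific length-$n$ word $u$; hence the supports of $u a_u wv$ for distinct length-$n$ words $u$ are pairwise disjoint in $M$. Therefore $a^{(n)} wv = 0$ forces $u a_u wv = 0$ for every $u$, and because left multiplication by $u$ is injective on $KM$, this gives $a_u wv = 0$, proving that each $a_u$ is singular.

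Finally, the ``in particular'' statement follows by contrapositive: given $a \in \sing{K}{S} \setminus \tightid{K}{S}$, the equivalence above supplies each $a_u \in KG \cap \sing{K}{S}$; and since $\tightid{K}{S}$ is an ideal, the assumption that every $a_u \in \tightid{K}{S}$ would yield each $u a_u \in \tightid{K}{S}$ and hence $a \in \tightid{K}{S}$, contradicting the hypothesis. The main technical obstacle I expect is verifying cleanly that $\sing{K}{S}$ is compatible with the $\mathbb Z$-grading and that, within a homogeneous slice, the prefix structure of $M$ genuinely separates the contributions of distinct $u$; once these are in hand the proof is essentially combinatorial.
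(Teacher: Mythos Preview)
Your argument is correct. The paper does not give a direct proof here: it simply cites \cite[Proposition~6.4]{simplicity} for the decomposition and the singularity equivalence, and then observes that $\tightid{K}{S}$ is an ideal for the final claim. Your proof is a self-contained verification of exactly what that citation supplies in this particular setting.

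The key ingredients you use---that $\sing{K}{S}$ is a two-sided ideal (for one direction), that right multiplication by a word is homogeneous of positive degree for the $\mathbb Z$-grading, and that the length-$n$ prefix of $ug(wv)g|_{wv}$ recovers $u$ when $|u|=n$---are all valid and together give a clean reduction from $a$ to each $a_u$. Your hedging at the end (``the main technical obstacle I expect\ldots'') is unnecessary: you have already dealt with both points, since $awv=0$ decomposes gradewise and then by prefix, and left cancellativity of $M$ lets you strip off $u$. The only stylistic suggestion is to state upfront that the single pair $(w,v)$ witnessing $awv=0$ simultaneously witnesses $a_uwv=0$ for every $u$, so no separate choice of $v$ per $u$ is needed.
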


Our next  proposition shows that when searching for singular elements that do not belong to the tight ideal, it suffices to look inside the group algebra.

\begin{Prop}
\label{p:supportinG}
Suppose $\sing{K}{S} \setminus \tightid{K}{S} \neq \emptyset$. Then $\sing{K}{S} \setminus \tightid{K}{S}$ intersects the group algebra $KG$.
\end{Prop}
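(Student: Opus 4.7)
The plan is to reduce the general case to Proposition~\ref{p:KMtoKG} by right-multiplying a putative element of $\sing K S\setminus\tightid K S$ by a sufficiently long finite word to land inside $KM$, while preserving both singularity and non-membership in the tight ideal.

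First, given $a\in\sing K S\setminus\tightid K S$, invoke Proposition~\ref{p:tightideal} to produce an infinite word $\ol w=x_1x_2\cdots\in X^\omega$ witnessing that $a\notin\tightid K S$, that is, such that $aw_n\neq 0$ for every finite prefix $w_n=x_1\cdots x_n$ of $\ol w$. Choose $N$ larger than $|v|$ for every basis element $ugv^\ast$ appearing in $\supp a$, and set $w=w_N$. Then for any $ugv^\ast\in\supp a$ either $v$ is not a prefix of $w$, in which case $ugv^\ast w=0$, or $w=vz$ for some $z\in X^\ast$, in which case $ugv^\ast w=ugz=ug(z)g|_z\in M$. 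Consequently $aw\in KM$, and of course $aw\neq 0$ by the choice of $\ol w$.

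Next, verify that $aw$ still lies in $\sing K S\setminus\tightid K S$. For singularity, use the characterization noted after Proposition~\ref{p:tightideal}: given any $u\in X^\ast$, singularity of $a$ applied to the word $wu$ supplies $v\in X^\ast$ with $a(wu)v=0$, so $(aw)uv=0$, proving $aw\in\sing K S$. For non-membership in $\tightid K S$, observe that the tail $\ol u=x_{N+1}x_{N+2}\cdots\in X^\omega$ is a witness: for any finite prefix $u_k=x_{N+1}\cdots x_{N+k}$ of $\ol u$ one has
\[
(aw)u_k=a(wu_k)=aw_{N+k}\neq 0,
\]
so $aw\notin\tightid K S$ by Proposition~\ref{p:tightideal} again.

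Finally, since $aw\in KM\cap(\sing K S\setminus\tightid K S)$, Proposition~\ref{p:KMtoKG} applies: writing $aw=\sum_{u\in X^\ast}u(aw)_u$ uniquely with $(aw)_u\in KG$, some coefficient $(aw)_u$ lies in $KG\cap\sing K S\setminus\tightid K S$, which is the desired element.

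The conceptual core of the argument is the trick of multiplying by a long prefix of a witnessing infinite word: this simultaneously kills the $v^\ast$-tails (landing in $KM$) and preserves the witness (so we stay outside $\tightid K S$). The routine-looking but essential check is that right multiplication by an element of $X^\ast$ preserves singularity, and the main bookkeeping step is verifying that the same infinite word (after truncation) still witnesses $aw\notin\tightid K S$; neither step presents a real obstacle, so the proof is essentially a one-paragraph reduction to Proposition~\ref{p:KMtoKG}.
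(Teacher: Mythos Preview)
Your proof is correct and follows essentially the same approach as the paper's: multiply by a sufficiently long prefix of a witnessing infinite word to land in $KM$, check that this preserves both singularity and non-membership in $\tightid K S$, and then invoke Proposition~\ref{p:KMtoKG}. The only cosmetic difference is that the paper cites the ideal property of $\sing K S$ for the singularity of $aw$, whereas you verify it directly from the elementwise characterization; the witness for $aw\notin\tightid K S$ is the same in both arguments.
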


\begin{Proof}
We begin by proving that $\sing{K}{S} \setminus \tightid{K}{S}$ intersects the $K$-algebra of the monoid $M=X^\ast G$.
Let $a \in \sing{K}{S} \setminus \tightid{K}{S} \neq \emptyset$, and let $\ol w$ be an infinite word which witnesses $a \notin \tightid{K}{S}$ by Proposition~\ref{p:tightideal}. Then for any prefix $w$ of $\ol w$, $aw \in \sing{K}{S} \setminus \tightid{K}{S}$ as well, as $aw \notin \tightid{K}{S}$ is witnessed by the word $w^\ast \ol w$.

Let $s=ugv^\ast \in \S$ and let $w \in X^\ast$ be a word longer than $v$. Then $ugv^\ast w=0$ if $v$ is not a prefix of $w$, and if $w=vw'$, then $ugv^\ast w=ugw'=ug(w')g|_{w'}$. In both cases, $ugv^\ast w \in M \cup \{0\}$.

Choose a prefix of $\ol w$ longer than any word $v$ with $ugv^\ast$ in $\supp a$. Then $aw \in KM$ and, as observed, $aw \in \sing{K}{S} \setminus \tightid{K}{S}$. Put $b=aw$, and apply Proposition~\ref{p:KMtoKG} to $b$. This guarantees an element $b_u \in KG \cap \sing{K}{S} \setminus \tightid{K}{S}$, which proves the claim.
\end{Proof}

We now turn to the case of a contracting group.

\begin{Prop}
\label{p:supportinN}
Suppose that the self-similar group $G$ is contracting with nucleus $N$. If $\sing{K}{S} \setminus \tightid{K}{S} \neq \emptyset$, then $\sing{K}{S} \setminus \tightid{K}{S}$ intersects $KN$.
\end{Prop}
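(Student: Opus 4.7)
The plan is to combine Proposition~\ref{p:supportinG} with the contracting property to push an element supported on $G$ to one supported on $N$, while preserving both singularity and the property of lying outside $\tightid{K}{S}$.

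By Proposition~\ref{p:supportinG}, pick $a \in KG \cap (\sing{K}{S} \setminus \tightid{K}{S})$ and write $a=\sum_{g \in F}a_g g$ with $F=\supp a$ finite. Since $G$ is contracting with nucleus $N$, I choose $m \geq 0$ large enough so that $g|_{X^m} \subseteq N$ for every $g \in F$. Next, since $a \notin \tightid{K}{S}$, Proposition~\ref{p:tightideal} provides an infinite word $\ol w \in X^\omega$ all of whose prefixes $w'$ satisfy $aw' \neq 0$. Let $w$ be the prefix of $\ol w$ of length $m$.

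Set $b=aw$. Two things to verify: that $b \in \sing{K}{S}$ (immediate since $\sing{K}{S}$ is a right ideal) and that $b \notin \tightid{K}{S}$ (witnessed by the infinite word obtained by deleting the prefix $w$ from $\ol w$; for any prefix $v$ of that suffix, $bv = a(wv) \neq 0$ as $wv$ is a prefix of $\ol w$). Expanding using $gw=g(w)g|_w$ in the monoid $M=X^\ast G$ gives
\[
b=aw=\sum_{g \in F}a_g g(w) g|_w = \sum_{u \in X^m} u \cdot b_u,
\quad \text{where} \quad b_u=\sum_{\substack{g \in F\\ g(w)=u}} a_g\, g|_w.
\]
This is the canonical decomposition of $b \in KM$ from Proposition~\ref{p:KMtoKG}, and crucially each $b_u$ is a $K$-linear combination of the elements $g|_w$ with $g \in F$, all of which lie in $N$. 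Hence $b_u \in KN$ for every $u \in X^m$.

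Now apply Proposition~\ref{p:KMtoKG} to $b \in \sing{K}{S} \setminus \tightid{K}{S}$: there exists $u \in X^m$ with $b_u \in KG \cap (\sing{K}{S} \setminus \tightid{K}{S})$. Since $b_u \in KN$ as well, we obtain the desired element of $KN \cap (\sing{K}{S} \setminus \tightid{K}{S})$. The only part requiring any care is the bookkeeping needed to confirm that $b=aw$ remains outside $\tightid{K}{S}$ after right multiplication by $w$; this is where the witness $\ol w$ for $a \notin \tightid{K}{S}$ is used, and it is the only substantive observation beyond the contracting hypothesis and the earlier propositions.
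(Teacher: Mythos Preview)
Your proof is correct and follows essentially the same approach as the paper: reduce to $KG$ via Proposition~\ref{p:supportinG}, right multiply by a suitable prefix of the witness word so that all sections land in $N$, then apply Proposition~\ref{p:KMtoKG}. Your version is slightly more explicit in writing out the decomposition $b=\sum_u u\cdot b_u$, but the argument is the same.
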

\begin{Proof}
If $\sing{K}{S} \setminus \tightid{K}{S} \neq \emptyset$, then by Proposition~\ref{p:supportinG} there is an element $a \in KG \cap \sing{K}{S} \setminus \tightid{K}{S}$, witnessed by an infinite word $\ol w$. Let $n \in \mathbb N$ be such that for any $g \in \supp a$,
$g|_{X^n} \subseteq N$, and take the prefix $w_n$ of $\ol w$ of length $n$. Then $\supp(aw_n) \subseteq X^\ast N$, and $aw_n \in \sing{K}{S} \setminus \tightid{K}{S}$, as witnessed by $w_n^\ast \ol w$. Apply Proposition~\ref{p:KMtoKG} to $b=aw_n$ to obtain an element $b_u \in \sing{K}{S} \setminus \tightid{K}{S}$ with $u \in X^\ast$. Notice that $\supp b_u \subseteq N$ by construction.
\end{Proof}

Proposition~\ref{p:supportinN} leads to the following criterion for simplicity for contracting groups.

\begin{Cor}
\label{c:simplereducedtoKN}
Let $G$ be a contracting self-similar group acting over a finite alphabet $X$ with nucleus $N$ and $S$ the associated inverse semigroup. Then the Nekrashevych algebra $\N{K}{G}{X}$ is simple if and only if $KN \cap \sing{K}{S} \setminus \tightid{K}{S}$ is empty.
\end{Cor}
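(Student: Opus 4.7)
The statement is essentially a direct consequence of the two results immediately preceding it, namely Theorem~\ref{t:simplicity} and Proposition~\ref{p:supportinN}, so the plan is to just assemble them. The main idea is that simplicity of $\N{K}{G}{X}=K_0S/\tightid{K}{S}$ is equivalent to the set $\sing{K}{S}\setminus\tightid{K}{S}$ being empty, and Proposition~\ref{p:supportinN} tells us that this set is empty precisely when it fails to meet the group algebra of the nucleus.

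In more detail, I would first handle the forward implication: if $\N{K}{G}{X}$ is simple, then by Theorem~\ref{t:simplicity} one has $\tightid{K}{S}=\sing{K}{S}$, so $\sing{K}{S}\setminus\tightid{K}{S}$ is empty and in particular its intersection with $KN$ is empty. This direction requires no use of the contracting hypothesis.

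For the converse, I would argue by contrapositive: suppose $\N{K}{G}{X}$ is not simple. Then Theorem~\ref{t:simplicity} gives $\sing{K}{S}\setminus\tightid{K}{S}\neq\emptyset$, and now Proposition~\ref{p:supportinN} -- which is exactly where the contracting hypothesis enters, via the ability to push an element of $\sing{K}{S}\setminus\tightid{K}{S}$ down to one supported on $N$ by multiplying on the right by a sufficiently long prefix of a witnessing infinite word -- yields an element of $KN\cap\sing{K}{S}\setminus\tightid{K}{S}$, showing this set is nonempty.

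There is essentially no obstacle here; the corollary is a bookkeeping combination of the cited results, and the substantive content has already been established in Proposition~\ref{p:supportinN} (reducing support to the nucleus) and Theorem~\ref{t:simplicity} (identifying the unique maximal ideal containing the tight ideal). The only thing worth being careful about is reading the set $KN\cap\sing{K}{S}\setminus\tightid{K}{S}$ unambiguously as $(KN\cap\sing{K}{S})\setminus\tightid{K}{S}$, i.e., the elements of $KN$ that are singular but not in the tight ideal; both implications are then formal.
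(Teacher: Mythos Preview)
Your proposal is correct and follows essentially the same approach as the paper: invoke Theorem~\ref{t:simplicity} to reduce simplicity to $\sing K S=\tightid K S$, then use Proposition~\ref{p:supportinN} for the nontrivial direction of the equivalence with $KN\cap\sing K S\setminus\tightid K S=\emptyset$. The paper's proof is just a two-sentence version of what you wrote.
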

\begin{Proof}
By Theorem~\ref{t:simplicity}, $\N{K}{G}{X}$ is simple if and only if $\tightid K S=\sing K S$. By Proposition~\ref{p:supportinN}, $\tightid K S \neq \sing K S$ if and only if there is an element $a \in \sing K S \setminus \tightid K S$ supported on $N$.
\end{Proof}

\begin{Rem}
Proposition~\ref{p:supportinG} implies that $\N{K}{G}{X}$ is simple if and only if the homogenous component of degree $0$ under the natural $\mathbb Z$-grading is simple.  We sketch the argument, which requires some familiarity with Steinberg algebras of groupoids and tight groupoids of inverse semigroups.  Let $S_0$ be the inverse subsemigroup of $S$ consisting of $0$ and all $ugv^\ast$ with $|u|=|v|$.  Then $S_0$ contains all the idempotents of $S$.  Hence $\sing{K}{S_0} = \sing{K}{S}\cap K_0S_0$ and $\tightid{K}{S_0}\subseteq \tightid{K}{S}\cap K_0S_0$.  In fact, $\tightid{K}{S_0}=\tightid{K}{S}\cap K_0S_0$ since if $a\in \tightid{K}{S}\cap K_0S_0$ and $aX^N=0$, then  $a=a(1-\sum_{w\in X^N}ww^\ast)\in \tightid{K}{S_0}$.  Thus $K_0S_0/\tightid{K}{S_0}$ embeds in $\N{K}{G}{X}$ as the homogeneous component of degree zero.

Since $E(S_0)=E(S)$, we have that $\G_T(S_0)$ is the open subgroupoid of $\G_T(S)$ consisting of all germs of elements of $S_0$ (note that this subgroupoid is the kernel of the natural continuous cocycle $\G_T(S)\to \mathbb Z$ sending the germ $[ugv^\ast,w]$ to $|u|-|v|$).
Being an open subgroupoid of an effective groupoid, $\G_T(S_0)$ is effective.  It is also minimal.  For if $w\in X^{\omega}$ and $uX^{\omega}$ is a basic open set of the unit space $X^{\omega}$, then if $v$ is the prefix of $w$ of length $|u|$, and so $w=vz$, then $[uv^\ast,w]$ is an arrow from $w$ to $uz$, whence $uX^{\omega}$ intersects the orbit of $w$.  Thus every orbit is dense in $X^\omega$ and so $\G_T(S_0)$ is minimal.  It now follows from the results of~\cite{simplicity} that $K_0S_0/\tightid{K}{S_0}$ is simple if and only if $\sing{K}{S_0}=\tightid{K}{S_0}$.  By our previous observations, we deduce that $K_0S_0/\tightid{K}{S_0}$ is simple if and only if no element of $\sing K {S_0}\setminus \tightid K {S_0}$ belongs to $K_0S_0$, which holds if and only if no element of $\sing K {S}\setminus \tightid K {S}$ belongs to $K_0S_0$. Since $KG\leq K_0S_0$, Proposition~\ref{p:supportinG} then implies that $\N{K}{G}{X}$ is simple if and only if $K_0S_0/\tightid{K}{S_0}$, the homogeneous component of degree $0$,  is simple.
\end{Rem}

\subsection{The simplicity graph} Let $G$ be a self-similar group and $A \subseteq G$ an automaton. Our goal is to provide a computable criterion to check whether there is an element of $\sing K S\setminus \tightid K S$ supported on $A$. We then apply this to the nucleus of a contracting group.

If an $X$-generated monoid acts on the left of a set $V$, then the \emph{Schreier graph} of the action is the edge-labeled digraph with vertex set $V$ and directed edges of the form
$v \xrightarrow{\,\,x\,\,} xv$ for $x\in X$ and $v\in V$. Note that, for each $x\in X$, there is exactly one edge labeled $x$ leaving any vertex, and hence the Schreier graph is finite if both $V$ and $X$ are finite.  One can similarly define the Schreier graph of a right action of an $X$-generated monoid on $V$.  In this case, edges are of the form $v \xrightarrow{\,\,x\,\,} vx$.   A Schreier graph has no sinks.

We define a family of equivalence relations $V=\{\eq w: w \in X^\ast\}$ on $A$, recursively. Let $\eq \empty$ be the equality relation. For any $w \in X^\ast$, $x\in X$ and $g,h \in A$, put
$g \eq {xw} h$ whenever $g(x)=h(x)$ and $g|_x \eq w h|_x$.
Notice that the set $\{\eq w: w \in X^\ast\}$ is finite as $A$ is finite.

The set $V$ arises as the orbit of the equality relation under a certain left action of  $X^\ast$ on the set of equivalence relations on $A$.  For $x\in X$ and $\equiv$ an equivalence relation on $A$,  put $g \mathrel{(x \cdot{\equiv})} h$ whenever $g(x) = h(x)$ and $g|_x \equiv h|_x$ for $g,h\in A$.  The action is then extended to words recursively by having the empty word act identically and putting $(xw)\cdot {\equiv}$ equal to $x\cdot (w\cdot{\equiv})$ for $x\in X$ and $w\in X^\ast$.  We observe that $\eq w$ is precisely $w\cdot{\eq\empty}$, and $\eq\empty$ is the equality relation.

We define $\Sg_A$ to be the Schreier graph of the left action of $X^\ast$ on $V$ obtained by restriction of the action on equivalence relations.
For $u,w\in X^\ast$, there is a path labeled by $\rho(u)$ from $\equiv_w$ to $\equiv_{uw}$ where $\rho(u)$ denotes the reversal of the word $u$.

\begin{Prop}
\label{p:recursive.def}
For any $g,h \in A$, $w \in X^\ast$ and equivalence relation $\equiv$ on $A$, we have that $g\mathrel{(w\cdot {\equiv})} h$ if and only if $g(w)=h(w)$ and $g|_w\equiv h|_w$.   In particular,  we have $g \eq w h$ if and only if $g(w)=h(w)$ and $g|_{w}=h|_{w}$, that is, if and only if $gw=hw$ in $M=X^\ast G\leq S$.
\end{Prop}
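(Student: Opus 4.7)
The plan is to prove the equivalence $g\,(w\cdot{\equiv})\,h \iff g(w)=h(w)\text{ and }g|_w\equiv h|_w$ by induction on $|w|$, and then deduce the particular case by specializing to the equality relation and using the multiplication formula in $M=X^\ast G$.

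For the base case $w=\varepsilon$, the empty word acts as the identity on equivalence relations, so $\varepsilon\cdot{\equiv}$ is just $\equiv$, while on the right-hand side $g(\varepsilon)=\varepsilon=h(\varepsilon)$ holds trivially and $g|_\varepsilon=g$, $h|_\varepsilon=h$, so both sides reduce to $g\equiv h$. For the inductive step, write $w=xu$ with $x\in X$ and $u\in X^\ast$ of length one less. By the recursive definition of the action, $(xu)\cdot{\equiv}=x\cdot(u\cdot{\equiv})$, and by the definition of how a single letter acts, $g\,((xu)\cdot{\equiv})\,h$ is equivalent to the conjunction $g(x)=h(x)$ and $g|_x\,(u\cdot{\equiv})\,h|_x$. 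Applying the inductive hypothesis to the pair $g|_x,h|_x$ and the word $u$, the second condition unpacks to $g|_x(u)=h|_x(u)$ together with $(g|_x)|_u\equiv (h|_x)|_u$. Since $(g|_x)|_u=g|_{xu}$ and similarly for $h$, and since $g(xu)=g(x)\,g|_x(u)$, $h(xu)=h(x)\,h|_x(u)$ are concatenations of a length-one prefix with a remaining suffix, the conjunction $g(x)=h(x)\wedge g|_x(u)=h|_x(u)$ is equivalent to $g(xu)=h(xu)$. This completes the induction.

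For the specialization, take $\equiv$ to be $\equiv_\varepsilon$, i.e.\ equality on $A$. Then $w\cdot{\eq\varepsilon}=\eq w$ by definition of $V$, and the general statement immediately yields $g\eq w h \iff g(w)=h(w)\text{ and }g|_w=h|_w$. For the final equivalence with $gw=hw$ in $M$, recall the multiplication rule $ug\cdot vh = u\,g(v)\,g|_v\,h$ in $M=X^\ast G$; specializing to $u=\varepsilon$, $v=w$, $h=1$ gives $gw=g(w)\,g|_w$ and likewise $hw=h(w)\,h|_w$. Since every element of $X^\ast G$ has a unique factorization as a word in $X^\ast$ times an element of $G$, equality $gw=hw$ in $M$ holds precisely when both factors agree, i.e.\ $g(w)=h(w)$ and $g|_w=h|_w$.

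There is no real obstacle here: the argument is a pure unwinding of the recursive definitions, with the only substantive points being the identity $(g|_x)|_u=g|_{xu}$ (already recalled in Section~\ref{s:ssg}), the length-preserving character of the action (which lets us read off the first letter from $g(xu)$), and the unique-factorization property of the monoid $M=X^\ast G$.
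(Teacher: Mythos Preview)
Your proof is correct and follows essentially the same approach as the paper's: both argue by induction on $|w|$, unwind the recursive definition of $x\cdot{\equiv}$, and use the identities $g(xu)=g(x)g|_x(u)$ and $g|_{xu}=(g|_x)|_u$ to reduce to the inductive hypothesis applied to the pair $g|_x,h|_x$. Your write-up is slightly more detailed in justifying the final equivalence with $gw=hw$ in $M$ (via the product formula and unique factorization), which the paper leaves implicit.
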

\begin{proof}
We prove this by induction on $|w|$. If $|w|=0$, then $g(\varepsilon) = \varepsilon = h(\varepsilon)$ and $g|_\varepsilon=g$, $h|_{\varepsilon} =h$, and so there is nothing to prove.  Assume the proposition is true for $w$ of length $n$ and let $x\in X$.  Then $g|_{xw} = (g|_x)|_w$, $h|_{xw} = (h|_x)|_w$ and $g(xw) = g(x)g|_x(w)$, $h(xw) = h(x)h|_x(w)$.  Thus $g(xw)=h(xw)$ and $g|_{xw}\equiv h|_{xw}$  is equivalent to $g(x)=h(x)$, $g|_x(w)=h|_x(w)$ and   $(g|_x)|_w \equiv (h|_x)|_w$.  By induction, this is equivalent to $g(x)=h(x)$ and $g|_x\mathrel{(w\cdot {\equiv})} h|_x$, that is, $g\mathrel{(xw\cdot {\equiv})} h$.  The final statement follows by taking $\equiv$ to be the equality relation $\eq \varepsilon$ and recalling that ${\eq w}$ is $w\cdot{\eq\varepsilon}$.
\end{proof}

The strongly connected components of $\Sg_A$ are partially ordered by reachability: we put $\mathcal C\leq \mathcal C'$ if there is a path from $\mathcal C'$ to $\mathcal C$. We call the minimal elements \emph{minimal components}; these are precisely the strongly connected components that no edges leave. Denote the set of vertices contained in minimal components by $\Vmin$ and the set of essential vertices by $\Vess$.  Note that $\Vmin\subseteq \Vess$.  We wish to show that there is exactly one minimal component of $\Sg_A$; moreover, we show that $\Sg_A$ is synchronizing, that is, there are words $z$ such that any path  labeled by $z$  ends at the same vertex.

Given any word $u \in X^\ast$,
consider the set
\[V_{u}=\{{\eq {uw}}: w\in X^\ast\}.\]  It is the image of $u$ under the left action of $X^\ast$ on $V$ discussed above.

\begin{Prop}\label{p:reset.word}
We have that \[1=\min_{u\in X^\ast} |V_u|.\]  Moreover,  if $|V_u|=1$, then $V_{uz}=V_u=\{\equiv_u\}$  for all $z\in X^\ast$ and $|V_w|=1$ for all $w\in X^\ast uX^{\ast}$.
\end{Prop}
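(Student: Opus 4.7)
The proof reduces to a single monotonicity property of the relations $\equiv_u$: right-extending $u$ can only coarsen them. With this in hand, the existence of some $u$ with $|V_u|=1$ is a ``maximal element in a finite poset'' argument, and the moreover clauses follow formally from the characterization of $\equiv_u$ in Proposition~\ref{p:recursive.def}.

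The first step is the key lemma: $\equiv_u\,\subseteq\,\equiv_{uv}$ as relations on $A$, for all $u,v\in X^\ast$. This is a direct cocycle computation: if $g(u)=h(u)$ and $g|_u=h|_u$, then
\[
g(uv)=g(u)g|_u(v)=h(u)h|_u(v)=h(uv)\quad\text{and}\quad g|_{uv}=(g|_u)|_v=(h|_u)|_v=h|_{uv},
\]
giving $g\equiv_{uv}h$. To obtain $\min_u|V_u|=1$, I will choose $w\in X^\ast$ such that $\equiv_w$ is maximal in $V$ with respect to inclusion; such $w$ exists since $V$ is finite and non-empty. For any $z\in X^\ast$, the lemma yields $\equiv_{wz}\supseteq \equiv_w$, and since $\equiv_{wz}\in V$, maximality forces $\equiv_{wz}=\equiv_w$. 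Hence $V_w=\{\equiv_{wz}:z\in X^\ast\}=\{\equiv_w\}$, so $|V_w|=1$.

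For the moreover clauses, assume $|V_u|=1$; then $V_u=\{\equiv_u\}$ since $\equiv_u\in V_u$, and consequently $\equiv_{uw'}=\equiv_u$ for every $w'\in X^\ast$. Hence $V_{uz}=\{\equiv_{uzy}:y\in X^\ast\}\subseteq\{\equiv_{uw'}:w'\in X^\ast\}=V_u$, and being non-empty, $V_{uz}=V_u$. For $w=v_1uv_2\in X^\ast uX^\ast$, Proposition~\ref{p:recursive.def} gives $g\equiv_{v_1uv_2y}h$ iff $g(v_1)=h(v_1)$ and $g|_{v_1}\equiv_{uv_2y}h|_{v_1}$, which, using $\equiv_{uv_2y}=\equiv_u$ from the preceding observation, simplifies to $g(v_1)=h(v_1)$ and $g|_{v_1}\equiv_u h|_{v_1}$, a condition independent of $y$. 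Hence $V_w$ is a singleton. The only real content is the monotonicity lemma; the rest is formal manipulation of the action on a finite set.
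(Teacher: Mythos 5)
Your proof is correct and follows essentially the same route as the paper's: the heart of both arguments is the monotonicity ${\equiv_u}\subseteq{\equiv_{uv}}$, which you verify via the cocycle identities and the paper verifies via the equivalent observation that $gu=hu$ implies $guv=huv$ in $M=X^\ast G$. The only cosmetic differences are that you extract the singleton $V_w$ from a maximal element of $(V,\subseteq)$ where the paper minimizes $|V_u|$ and compares least elements, and that you unfold Proposition~\ref{p:recursive.def} directly for $w\in X^\ast uX^\ast$ where the paper notes that $V_{zu}$ is the image of $V_u$ under the action of $z$; both are routine finiteness/formal steps.
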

\begin{proof}
Note that, for all $u,z\in X^\ast$, we have ${\equiv_u}\subseteq {\equiv_{uz}}$, as $g\equiv_u h$ implies $gu=hu$ in $M=X^\ast G$, and hence $guz=huz$ in $M$, i.e., $g\equiv_{uz} h$ by Proposition~\ref{p:recursive.def}. In particular, $\equiv_u$ is contained in every equivalence relation in $V_u$.   Also, note that $V_{uz}\subseteq V_u$ by definition.  Therefore, if $|V_u|$ is minimal, then $V_u=V_{uz}$, and so ${\equiv_u}= {\equiv_{uz}}$, as the unique smallest equivalence relations in $V_u$ and $V_{uz}$ are $\equiv_u$ and $\equiv_{uz}$, respectively.  Since this is true for all $z\in X^\ast$, we deduce that $V_u=\{\equiv_u\}$ is a singleton.    If $V_u$ is a singleton, then since, for $z\in X^\ast$, the set $V_{zu}$ is the image of $V_u$ under the action of $z$ on equivalence relations in $V$, we deduce that $V_{zu}$ is a singleton, too. This proves the final assertion.
\end{proof}

As a corollary, we show that $\Sg_A$ has a unique minimal component.

\begin{Cor}\label{c:one.min.comp}
There is exactly one minimal component in $\Sg_A$.  An equivalence relation $\equiv$ belongs to the minimal component if and only if ${\equiv} = {\equiv_u}$ for some word $u\in X^\ast$ with $|V_u|=1$.
\end{Cor}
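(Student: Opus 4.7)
The plan is to identify the unique minimal component explicitly as $C = \{\eq u : u \in W\}$, where $W = \{u \in X^\ast : |V_u|=1\}$. By Proposition~\ref{p:reset.word}, $W$ is non-empty, and since $|V_w|=1$ for every $w \in X^\ast u X^\ast$ whenever $u \in W$, the set $W$ is a two-sided ideal of the free monoid $X^\ast$. The corollary then reduces to showing that $C$ is a strongly connected subgraph of $\Sg_A$ with no outgoing edges, and that every minimal component must meet $C$.

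First, I would verify that $C$ has no outgoing edges in $\Sg_A$: the edge labeled $x \in X$ out of $\eq u$ goes to $x\cdot \eq u = \eq{xu}$, and $xu \in W$ because $W$ is a left ideal. Next, I would show $C$ is strongly connected. Given $u, u' \in W$, the path in $\Sg_A$ starting at $\eq u$ with label $\rho(u')$ terminates at $\eq {u'u}$. The containment $V_{u'u} \subseteq V_{u'}$ (already noted in the proof of Proposition~\ref{p:reset.word}) combined with $|V_{u'}|=1$ forces $V_{u'u} = V_{u'} = \{\eq{u'}\}$, so $\eq{u'u} = \eq{u'}$. Hence any two vertices of $C$ are mutually reachable, so $C$ is a strongly connected subgraph with no outgoing edges; such a subgraph is necessarily a maximal strongly connected subgraph of $\Sg_A$, i.e., a minimal component.

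For uniqueness, suppose $C'$ is any minimal component and pick $\eq u \in C'$. Choosing any $v \in W$, the path labeled $\rho(v)$ starting at $\eq u$ ends at $\eq{vu}$, which lies in $C$ since $vu \in W$. Because $C'$ has no outgoing edges, $\eq{vu} \in C'$ as well, so $C \cap C' \neq \emptyset$, whence $C' = C$ as distinct strongly connected components are disjoint. Part (b) of the statement then reads off directly from the description of $C$: an equivalence relation $\equiv$ lies in the minimal component if and only if $\equiv = \eq u$ for some $u \in W$, that is, for some $u$ with $|V_u|=1$.

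The only slightly delicate point is the strong connectivity argument, which hinges on translating the reversed label $\rho(u')$ back into the left action to produce $\eq{u'u}$ and then using the collapse $V_{u'u} = V_{u'}$ guaranteed by $u' \in W$; everything else is essentially bookkeeping based on Proposition~\ref{p:reset.word} and the definition of $\Sg_A$.
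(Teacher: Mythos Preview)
Your proof is correct and uses essentially the same mechanism as the paper: both arguments hinge on Proposition~\ref{p:reset.word} to produce a word $u$ with $|V_u|=1$ and then exploit that reading $\rho(u)$ from any vertex lands at $\equiv_u$. The only difference is organizational---the paper fixes one such $u$ and shows $\equiv_u$ is reachable from every vertex (so its component sits below all others), whereas you first describe the minimal component as $C=\{\equiv_u:|V_u|=1\}$ and verify directly that it is a sink strongly connected component; both routes are valid and equally short.
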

\begin{proof}
Let $u\in X^\ast$ with $|V_u|=1$, as per Proposition~\ref{p:reset.word}.  Then, for any vertex $\equiv_w$, we have that ${\equiv_{uw}}\in V_u = \{\equiv_u\}$, and so $\rho(u)$ labels a path from $\equiv_w$ to $\equiv_u$.  It follows that the strongly connected component of $\equiv_u$ is below every strongly connected component and hence is the unique minimum component.  The second statement follows because if $\equiv$ is some element of the minimum component, then ${\equiv}={\equiv_{zu}}$ for some $z\in X^\ast$ (with $\rho(z)$ labelling a path from $\equiv_u$ to $\equiv$), and so $|V_{zu}|=1$ by Proposition~\ref{p:reset.word}.
\end{proof}

For each relation ${\equiv} \in V$, we introduce the following system of linear equations over $\mathbb Z$ in variables $c_g$ with $g\in A$:
\[E_{\equiv}=\left\{\sum_{g \equiv h}c_g=0: h \in A\right\},\]
and let
\[E_{S}=\bigcup_{{\equiv} \in \Vmin} E_{\equiv}.\]
For brevity, we denote $E_{\eq w}$ by $E_{w}$.

Given any field $K$ and a finite homogeneous linear system of equations $E$ over $\mathbb Z$, denote the image of $E$ in the prime field of $K$ by $E_K$. Then $E_K$ has a solution in $K$ if and only if it has a solution in the prime field, and so the existence of solutions only depends on the characteristic of $K$.

If $E$ is any linear system of equations over $\mathbb Z$ in variables $c_g$ with $g\in A$, we say that $a =\sum_{g \in A}a_g g \in KG\leq K_0S$ satisfies $E_K$ if putting $c_g=a_g$ yields a solution of $E_K$.

For any equivalence relation $\equiv$ on $A$, there is a natural $K$-linear map $\pi_{\equiv}\colon KA \to K[A/{\equiv}]$, induced by the projection $A\to A/{\equiv}$, defined by $a=\sum_{g \in A}a_g g \mapsto \sum_{g \in A}a_g [g]_{\equiv}$ where $[g]_{\equiv}$ is the equivalence class of $g$. Then $a$ is in kernel of this map if and only if
$\sum_{g \equiv h}a_g=0$ for all $h \in A$, that is, if $a$ satisfies $E_{\equiv, K}$.

Notice that $A/{\equiv}_w$ is in bijection with $Aw$ via $[g]_{\equiv_w} \mapsto gw$ by Proposition~\ref{p:recursive.def}. Under this identification, $\pi_{\equiv_w}$ is just right multiplication by $w$, and so its kernel consists of those elements $a$ with $aw=0$. We have thus proved the following lemma:

\begin{Lem}
\label{lem:aw=0}
If $a=\sum_{g \in A}a_g g \in KA$, then $aw=0$ if and only if $a$ satisfies $E_{w,K}$.
\end{Lem}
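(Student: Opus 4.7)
The plan is to make the identification established in the paragraph immediately preceding the lemma fully rigorous and then read off both implications at once. First, I would record that by Proposition~\ref{p:recursive.def}, the map $g \mapsto gw$ descends to a well-defined bijection $A/{\eq w} \to Aw \subseteq M$, since $g \eq w h$ in $A$ if and only if $gw = hw$ in $M = X^\ast G$. Under this bijection, the $K$-linear projection $\pi_{\eq w}\colon KA \to K[A/{\eq w}]$ discussed in the text is identified with the map $KA \to KM$ sending $a$ to $aw$.

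Next, I would expand directly:
$$aw \;=\; \sum_{g \in A} a_g\, gw \;=\; \sum_{[h] \in A/{\eq w}} \Big(\sum_{g \eq w h} a_g\Big)\, hw,$$
where the outer sum ranges over a set of representatives $h \in A$ of the $\eq w$-classes. Since the distinct elements of $M$ lie in the basis $\S$ of $K_0S$, they are $K$-linearly independent, so $aw = 0$ if and only if every bracketed coefficient $\sum_{g \eq w h} a_g$ vanishes for each $h \in A$. By definition of $E_{\eq w}$ (and hence of $E_{w,K}$ after passing to the prime field of $K$), this is precisely the condition that $a$ satisfies $E_{w,K}$.

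No serious obstacle arises here; the lemma is essentially a coordinate-level restatement of Proposition~\ref{p:recursive.def} combined with the linear independence of the basis $\S$ of $K_0S$. The only point to check carefully is that the coalescence of terms in the expansion of $aw$ is governed by $\eq w$ rather than some coarser relation — equivalently, that $gw = hw$ in $M$ forces $g \eq w h$ — and this is exactly what Proposition~\ref{p:recursive.def} provides via the characterization $g \eq w h \iff g(w)=h(w) \text{ and } g|_w = h|_w$.
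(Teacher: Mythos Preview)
Your proposal is correct and follows essentially the same approach as the paper, which identifies $\pi_{\equiv_w}$ with right multiplication by $w$ via the bijection $A/{\equiv_w}\cong Aw$ furnished by Proposition~\ref{p:recursive.def}. Your explicit expansion of $aw$ grouped by $\equiv_w$-classes simply unpacks that identification, and your appeal to the linear independence of $\S$ in $K_0S$ is exactly what underlies the paper's claim that the kernel of $\pi_{\equiv_w}$ consists of those $a$ with $aw=0$.
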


\begin{Thm}
\label{t:singeqn}
An element $a=\sum_{g \in A}a_g g$ of $KA$ is singular in $K_0S$ if and only if it satisfies $E_{S,K}$.
\end{Thm}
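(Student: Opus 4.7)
The plan is to combine Lemma~\ref{lem:aw=0} with the minimality structure of $\Sg_A$ encoded by Proposition~\ref{p:reset.word} and Corollary~\ref{c:one.min.comp}. The starting observation is that, for $a \in KA$, singularity in $K_0S$ unpacks to the condition that for every $u \in X^\ast$ there exists $v \in X^\ast$ with $auv = 0$, and by Lemma~\ref{lem:aw=0} this is the same as $a$ satisfying $E_{uv, K}$. Since $E_{w,K}$ depends only on the equivalence relation $\equiv_w$, singularity is a statement about which vertices of $\Sg_A$ are eventually reached from $\equiv_u$ for an arbitrary $u$, which is exactly the kind of condition that $\Vmin$ controls.

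For the ``if'' direction, I would assume $a$ satisfies $E_{S,K}$, i.e., $a$ satisfies $E_{\equiv,K}$ for every $\equiv \in \Vmin$. Given $u \in X^\ast$, I would invoke Corollary~\ref{c:one.min.comp} to find $v \in X^\ast$ such that $\equiv_{uv}$ lies in the unique minimum component; indeed, the corollary supplies some $u_0$ with $|V_{u_0}| = 1$, and then the word $\rho(u_0)$ labels a path from $\equiv_u$ into the minimum component, i.e., $\equiv_{u_0 u}$ is in $\Vmin$, so we may take $v$ with $uv = u_0 u$ (or more cleanly use Proposition~\ref{p:reset.word} directly to produce $v$). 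By hypothesis $a$ satisfies $E_{\equiv_{uv}, K} = E_{uv,K}$, hence $auv = 0$ by Lemma~\ref{lem:aw=0}, proving $a$ singular.

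For the ``only if'' direction, I would fix $\equiv \in \Vmin$ and use Corollary~\ref{c:one.min.comp} to write ${\equiv} = {\equiv_u}$ for some $u \in X^\ast$ with $|V_u| = 1$. Singularity of $a$ then produces $v \in X^\ast$ with $auv = 0$, so $a$ satisfies $E_{uv, K}$. By Proposition~\ref{p:reset.word}, $|V_u| = 1$ forces $\equiv_{uv} = \equiv_u = \equiv$, so $E_{uv,K} = E_{\equiv, K}$ and $a$ satisfies $E_{\equiv, K}$. Since $\equiv$ was arbitrary in $\Vmin$, $a$ satisfies $E_{S,K}$.

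There is no substantial obstacle; the whole content is bundled into the two preceding combinatorial facts, namely that the singleton-orbit words $u$ parametrize (and hit) precisely the vertices of the unique minimum component, and that $E_{w,K}$ factors through the equivalence relation $\equiv_w$. The only point that requires a bit of care is making sure that the $v$ extracted from singularity at an appropriate word $u$ really lands in $\Vmin$ (respectively, that any prescribed $\equiv \in \Vmin$ can be witnessed by a singleton-orbit $u$), but both follow directly from Proposition~\ref{p:reset.word} and Corollary~\ref{c:one.min.comp}.
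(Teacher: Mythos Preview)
Your overall approach is the same as the paper's, and the ``only if'' direction is essentially identical to the paper's argument.

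There is, however, a genuine slip in your ``if'' direction. Having found $u_0$ with $|V_{u_0}|=1$, you correctly observe that $\equiv_{u_0 u}\in\Vmin$, but then write ``we may take $v$ with $uv=u_0u$.'' No such $v$ need exist: $u$ is not in general a prefix of $u_0u$. The point is that following edges in $\Sg_A$ \emph{prepends} letters to the indexing word, so reaching $\Vmin$ from $\equiv_u$ gives you relations of the form $\equiv_{zu}$, not $\equiv_{uv}$.

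Your parenthetical remark actually contains the correct fix, though you do not spell it out: take $v=u_0$. Then $uv=uu_0\in X^\ast u_0 X^\ast$, so Proposition~\ref{p:reset.word} gives $|V_{uu_0}|=1$, and Corollary~\ref{c:one.min.comp} yields $\equiv_{uv}\in\Vmin$. Alternatively (and this is what the paper does), pick any $w$ with $\equiv_w\in\Vmin$ and note that $\equiv_{uw}=u\cdot{\equiv_w}$ stays in $\Vmin$ because no edges leave the minimal component; then $v=w$ works. Either way the repair is immediate, but as written your construction of $v$ does not succeed.
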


\begin{Proof}
First assume $a$ satisfies $E_{S,K}$.
Let $u \in X^\ast$ be any word. We need to show that there exists $w \in X^\ast$ such that $auw=0$. By Lemma~\ref{lem:aw=0}, it suffices to find $w \in X^\ast$ with $\equiv_{uw}$ in $\Vmin$. Take any $w \in X^\ast$ with $\equiv_w$ in $\Vmin$. Then there is a path labeled by $\rho(u)$ from $\equiv_w$ to $\equiv_{uw}$, and so $\equiv_{uw}\ \in \Vmin$ as well.

For the converse, suppose that $a$ is singular.
By Corollary~\ref{c:one.min.comp}, to show that $a$ satisfies $E_{S,K}$ it suffices to show that $a$ satisfies $E_{u,K}$ whenever $|V_u|=1$.  Choose $w \in X^\ast$ such that $auw=0$; this exists as $a$ is singular.  Then  ${\equiv_{uw}}\in V_u=\{\equiv_u\}$, and so ${\equiv_{uw}}={\equiv_u}$.  Thus $E_{u,K}=E_{uw,K}$, and the latter is satisfied by $a$ by Lemma~\ref{lem:aw=0}.  This completes the proof.
\end{Proof}

It may happen that a singular element $a$ satisfies the equations of more vertices than those in $\Vmin$.  In order to get $a$ not to belong to $\tightid{K}{S}$ we have to make sure that it does not satisfy the equations of too many vertices.

\begin{Prop}
\label{p:butisittight}
An element $a=\sum_{g \in A}a_g g$ of $KA$ belongs to $\tightid K S$ if and only if it satisfies $E_{\equiv, K}$ for every essential vertex $\equiv$ of $\Sg_A$.
\end{Prop}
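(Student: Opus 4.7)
The plan is to combine Proposition~\ref{p:tightideal}(4) with Lemma~\ref{lem:aw=0} to translate membership in $\tightid{K}{S}$ into combinatorial conditions on the Schreier graph $\Sg_A$. Together, these results say that $a \in \tightid{K}{S}$ if and only if there exists $N \geq 0$ such that $a$ satisfies $E_{\equiv_w, K}$ for every $w \in X^N$. The task then reduces to comparing this uniform condition against the condition that $a$ satisfies $E_{\equiv, K}$ for every essential vertex $\equiv$. The main bookkeeping is that in $\Sg_A$ a path labeled $x_1 \cdots x_k$ starting at $\equiv_{v_0}$ ends at $\equiv_{x_k \cdots x_1 v_0}$, so in particular every vertex $\equiv_w$ is reached from $\equiv_\varepsilon$ by a path of length exactly $|w|$ (labeled by the reversal $\rho(w)$).

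For the forward direction, I will assume $aw = 0$ for all $w \in X^N$, whence by associativity $av = 0$ for every $v \in X^\ast$ of length at least $N$. Given any essential vertex $\equiv$, Lemma~\ref{l:essential}(6) yields a directed path in $\Sg_A$ of length at least $\max(N, |V|)$ ending at $\equiv$; the bookkeeping above identifies the endpoint as $\equiv_v$ for some $v \in X^\ast$ with $|v| \geq N$. Therefore $av = 0$, and Lemma~\ref{lem:aw=0} gives that $a$ satisfies $E_{\equiv_v, K} = E_{\equiv, K}$, as required.

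For the converse, I will take $N = |V|$. For every $w \in X^N$, the path of length $N$ from $\equiv_\varepsilon$ labeled $\rho(w)$ ends at $\equiv_w$, so by Lemma~\ref{l:essential}(6) the vertex $\equiv_w$ is essential. By hypothesis $a$ satisfies $E_{\equiv_w, K} = E_{w, K}$, hence $aw = 0$ by Lemma~\ref{lem:aw=0}. Since this holds for every $w \in X^N$, Proposition~\ref{p:tightideal}(4) yields $a \in \tightid{K}{S}$. No serious obstacle arises; the only care required is keeping the orientation convention on $\Sg_A$ consistent so that word length and path length match up throughout the argument.
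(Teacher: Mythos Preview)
Your proof is correct and follows essentially the same approach as the paper's: translate membership in $\tightid K S$ into $aX^N=0$ via Proposition~\ref{p:tightideal}(4), then use Lemma~\ref{lem:aw=0} together with the path structure of $\Sg_A$ and the characterization of essential vertices in Lemma~\ref{l:essential}. One minor citation slip: in the forward direction you want Lemma~\ref{l:essential}(4) (for each $k$ there is a path of length $k$ ending at an essential vertex) rather than~(6), which is the existential direction; also the $\max(N,|V|)$ is harmless but only $N$ is needed.
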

\begin{Proof}
If $a\in \tightid K S$, then $aX^k=0$ for some $k\geq 0$ by Proposition~\ref{p:tightideal}.  By Lemma~\ref{lem:aw=0}, we deduce that $a$ satisfies $E_{w,K}$ for all $w$ with $|w|\geq k$.   If $\equiv$ is essential, then there is a directed path of length $k$ from a vertex $\eq u$ to  $\equiv$  by Lemma~\ref{l:essential}.  Thus ${\equiv}={\eq {vu}}$ for some $v\in X^k$, and so $a$ satisfies $E_{\equiv, K}=E_{vu,K}$ as $|vu|\geq k$.  Conversely, suppose that $a$ satisfies $E_{\equiv, K}$ for every essential vertex $\equiv$ and let $n$ be the number of vertices of $\Sg_A$.  Then any vertex $\eq w$ with $|w|=n$ is reachable from the equality relation by a path of length $n$, and hence is essential by Lemma~\ref{l:essential}(5).  Thus $a$ satisfies $E_{w,K}$ by assumption, whence $aw=0$ by Lemma~\ref{lem:aw=0}. Therefore, $a\in \tightid K S$ by Proposition~\ref{p:tightideal}(4).
\end{Proof}

We say that a homogeneous system of linear equations $E'$ over a field $K$ is a consequence of a homogeneous system of linear equations $E$ if each solution of $E$ is also a solution of $E'$.  This means that augmenting the coefficient matrix of $E$ by the coefficient matrix of $E'$ does not change the rank.

\begin{Cor}
\label{c:find.sing}
Let $G$ be a self-similar group and let $A \subseteq G$ be an automaton. Then $KA \cap \sing K S \setminus \tightid K S$ is non-empty if and only if there is a vertex  ${\equiv}\in \Vess\setminus \Vmin$ such that $E_{\equiv,K}$ is not a consequence of $E_{S,K}$.
\end{Cor}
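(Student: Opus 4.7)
The plan is to combine Theorem~\ref{t:singeqn}, which characterizes when an element supported on $A$ is singular, with Proposition~\ref{p:butisittight}, which characterizes when such an element lies in $\tightid{K}{S}$, and then unwind the definitions carefully. By Theorem~\ref{t:singeqn}, an element $a = \sum_{g \in A} a_g g \in KA$ lies in $\sing{K}{S}$ precisely when $a$ solves the system $E_{S,K} = \bigcup_{\equiv \in \Vmin} E_{\equiv, K}$. By Proposition~\ref{p:butisittight}, $a$ lies in $\tightid{K}{S}$ precisely when $a$ solves $E_{\equiv, K}$ for every $\equiv \in \Vess$. Since $\Vmin \subseteq \Vess$, the condition defining $\tightid{K}{S}$ is automatically a strengthening of the one defining $\sing{K}{S}$, which matches the earlier observation that $\tightid{K}{S} \subseteq \sing{K}{S}$.

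For the forward direction, I would start with $a \in KA \cap \sing{K}{S} \setminus \tightid{K}{S}$. By Proposition~\ref{p:butisittight} there is some essential vertex $\equiv$ at which $E_{\equiv, K}$ fails for $a$. Since $a$ solves $E_{S,K}$, and this system already includes every equation coming from vertices in $\Vmin$, the failing vertex cannot lie in $\Vmin$; hence $\equiv \in \Vess \setminus \Vmin$. Moreover, $a$ itself is a solution of $E_{S,K}$ that is not a solution of $E_{\equiv, K}$, which exactly says that $E_{\equiv, K}$ is not a consequence of $E_{S,K}$.

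For the converse, suppose $\equiv \in \Vess \setminus \Vmin$ and $E_{\equiv, K}$ is not a consequence of $E_{S,K}$. Then there is a solution $(a_g)_{g \in A}$ of $E_{S,K}$ which fails at least one equation in $E_{\equiv, K}$. Forming $a = \sum_{g \in A} a_g g \in KA$, Theorem~\ref{t:singeqn} gives $a \in \sing{K}{S}$, and Proposition~\ref{p:butisittight} gives $a \notin \tightid{K}{S}$, as required.

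I do not anticipate a real obstacle here; the corollary is essentially a bookkeeping consequence of the two preceding results, together with the inclusion $\Vmin \subseteq \Vess$. The only point that deserves care is ensuring that when $a$ solves $E_{S,K}$ but fails some $E_{\equiv, K}$ with $\equiv$ essential, this $\equiv$ cannot accidentally belong to $\Vmin$; this is immediate because all equations indexed by minimal vertices are contained in $E_{S,K}$ by definition.
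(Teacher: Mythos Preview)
Your proof is correct and follows essentially the same approach as the paper: both directions are obtained by combining Theorem~\ref{t:singeqn} with Proposition~\ref{p:butisittight} and using that $E_{S,K}$ contains all equations indexed by $\Vmin$. The paper's argument is slightly more terse but otherwise identical.
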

\begin{proof}
If $a\in KA\cap \sing K S\setminus \tightid K S$, then by Proposition~\ref{p:butisittight}, there is an essential vertex $\equiv$ of $\Sg_A$ such that $a$ does not satisfy $E_{\equiv,K}$.   But since $a\in \sing K S$, we have that $a$ satisfies $E_{S,K}$ by Theorem~\ref{t:singeqn}, and so ${\equiv}\notin \Vmin$ and  $E_{\equiv, K}$ is not a consequence of $E_{S,K}$.

Conversely, if there exists an essential vertex $\equiv$ as in the statement of the corollary, then there exists $a\in KA$ that satisfies $E_{S,K}$ but not $E_{\equiv, K}$.  Then $a\in KA\cap \sing K S\setminus \tightid K S$ by Theorem~\ref{t:singeqn} and Proposition~\ref{p:butisittight}.
\end{proof}

Corollary~\ref{c:find.sing} and Corollary~\ref{c:simplereducedtoKN} then lead to the following criterion for simplicity of the Nekrashevych algebra of a contracting group.

\begin{Thm}
\label{t:contract.crit}
Let $G$ be a contracting self-similar group over the alphabet $X$, $K$ a field and $A$ an automaton containing the nucleus of $G$.  Then $\N{K}{G}{X}$ is simple if and only if $E_{\equiv,K}$ is a consequence of $E_{S,K}$ for each  ${\equiv}\in \Vess\setminus \Vmin$.
\end{Thm}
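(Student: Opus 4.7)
The plan is to obtain the theorem as an immediate consequence of Corollary~\ref{c:simplereducedtoKN} and Corollary~\ref{c:find.sing}, exploiting the inclusion $KN \subseteq KA$ provided by the hypothesis $N \subseteq A$. Since both of those corollaries have already been proved in the excerpt, the real work here is just bookkeeping: Corollary~\ref{c:find.sing} characterizes when $KA \cap \sing{K}{S} \setminus \tightid{K}{S}$ is non-empty in terms of $\Sg_A$, while Corollary~\ref{c:simplereducedtoKN} tells us that simplicity is detected by the restricted intersection $KN \cap \sing{K}{S} \setminus \tightid{K}{S}$.

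For the ``if'' direction, I would assume that $E_{\equiv,K}$ is a consequence of $E_{S,K}$ for every ${\equiv}\in\Vess\setminus\Vmin$ in $\Sg_A$. Corollary~\ref{c:find.sing} then gives $KA \cap \sing{K}{S}\setminus \tightid{K}{S} = \emptyset$. Since $N \subseteq A$ implies $KN \subseteq KA$, we inherit $KN \cap \sing{K}{S}\setminus \tightid{K}{S} = \emptyset$, and Corollary~\ref{c:simplereducedtoKN} delivers simplicity of $\N{K}{G}{X}$.

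For the ``only if'' direction, it is cleanest to argue by contrapositive. Suppose some ${\equiv}\in\Vess\setminus\Vmin$ in $\Sg_A$ fails the property that $E_{\equiv,K}$ is a consequence of $E_{S,K}$. Then Corollary~\ref{c:find.sing} produces an element of $KA \cap \sing{K}{S}\setminus \tightid{K}{S}$, so $\sing{K}{S} \neq \tightid{K}{S}$, and Theorem~\ref{t:simplicity} forces $\N{K}{G}{X}$ to be non-simple. Note that the containment $N \subseteq A$ is not actually required in this direction; any witness to the failure of the combinatorial condition yields non-simplicity regardless of whether the ambient automaton is the nucleus.

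I do not expect any serious obstacle: the theorem is essentially a packaging of the two corollaries, with the observation that enlarging $N$ to a larger automaton $A$ preserves the criterion because $KN \subseteq KA$. The only subtle point worth flagging is that the containment $KN \cap \sing K S \setminus \tightid K S \subseteq KA \cap \sing K S \setminus \tightid K S$ suffices for one direction precisely because Corollary~\ref{c:simplereducedtoKN}---itself resting on the contracting hypothesis via Proposition~\ref{p:supportinN}---tells us the nucleus already sees everything. The practical value of stating the theorem for an arbitrary $A \supseteq N$, rather than only for $A = N$, is that one often has a generating automaton in hand without needing to compute $N$ explicitly, and the criterion can be applied directly to $\Sg_A$.
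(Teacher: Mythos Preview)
Your proposal is correct and matches the paper's own approach: the paper simply states that the theorem follows from Corollary~\ref{c:find.sing} and Corollary~\ref{c:simplereducedtoKN}, and you have spelled out exactly that deduction, including the observation that $KN\subseteq KA$ handles the ``if'' direction while the ``only if'' direction goes through Theorem~\ref{t:simplicity} without needing the containment $N\subseteq A$.
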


The following fact about systems of equations over $\mathbb Z$ is well known, but we include here the argument for completeness.

\begin{Prop}\label{p:smith}
Let $A$ be an $m\times n$ integer matrix.  Then the $\mathbb Q$-rank of $A$ agrees with the $\mathbb Z_p$-rank of $A$ for all but finitely many primes $p$ and one can compute this finite set of primes and the corresponding rank in polynomial time.
\end{Prop}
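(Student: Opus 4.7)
The plan is to invoke the Smith normal form theorem. For any $m \times n$ integer matrix $A$, there exist unimodular matrices $U \in GL_m(\mathbb Z)$ and $V \in GL_n(\mathbb Z)$ such that $UAV = D$ is a diagonal matrix with diagonal entries $d_1, d_2, \ldots, d_r, 0, \ldots, 0$, where $d_1 \mid d_2 \mid \cdots \mid d_r$ are positive integers and $r$ is the $\mathbb Q$-rank of $A$. I would cite an algorithmic version of this result: the Smith normal form, and hence the elementary divisors $d_i$, can be computed in polynomial time in the bit size of $A$ by standard techniques such as the algorithm of Kannan--Bachem or of Storjohann. Hadamard-type bounds on minors of $A$ also ensure that each $d_i$ has polynomial bit size.

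The reduction to the statement is then immediate. Since $U$ and $V$ are unimodular, their reductions modulo any prime $p$ are invertible over $\mathbb F_p$, so the rank of $A$ over any field $K$ equals the rank of $D$ over $K$. Over $\mathbb Q$ this rank is $r$; over $\mathbb F_p$ it equals the number of indices $i \in \{1,\ldots,r\}$ with $p \nmid d_i$. Because of the divisibility chain $d_1 \mid d_2 \mid \cdots \mid d_r$, some $d_i$ is divisible by $p$ if and only if $d_r$ is divisible by $p$. Hence the $\mathbb Z_p$-rank of $A$ coincides with its $\mathbb Q$-rank precisely for those primes $p$ that do not divide $d_r$, yielding a finite exceptional set consisting of the prime divisors of $d_r$; moreover, for each such exceptional prime the exact $\mathbb Z_p$-rank is obtained by counting how many $d_i$ are divisible by $p$, which one reads off the Smith normal form directly.

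The only non-trivial step is securing the polynomial-time computation of the Smith normal form, since a naive Gauss-style elementary operations approach can suffer from intermediate expression swell and cause exponential entry growth; this is why one must appeal to modular or LLL-based algorithms from the integer matrix literature rather than a hand-rolled reduction. Once the elementary divisors $d_1, \ldots, d_r$ are computed, extracting $r$ and identifying the exceptional primes as the prime divisors of $d_r$ (an integer of polynomial bit size) completes the argument with no further combinatorial content.
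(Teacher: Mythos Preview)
Your proposal is correct and follows essentially the same approach as the paper: compute the Smith normal form, read off the invariant factors $d_1\mid\cdots\mid d_r$, and observe that the $\mathbb Z_p$-rank drops below the $\mathbb Q$-rank $r$ precisely when $p\mid d_r$. Your write-up is in fact more detailed than the paper's (which is four sentences), since you explain why unimodular transformations preserve rank over every field, name specific polynomial-time Smith normal form algorithms, and flag the coefficient-growth issue that rules out naive elimination.
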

\begin{Proof}
Any integer matrix can be brought using elementary row and column operations over $\mathbb Z$ into Smith normal form in polynomial time.  Smith normal form is diagonal with non-zero diagonal entries $d_1\mid d_2\mid\cdots \mid d_r$ and $r$ is the $\mathbb Q$-rank of $A$.  If $p$ does not divide $d_r$, then the $\mathbb Z_p$-rank of $A$ is $r$.  Otherwise, the $\mathbb Z_p$-rank of $A$ is $i-1$ where $i$ is minimum with $p\mid d_i$.
\end{Proof}

The following algorithm lets one determine if an automaton supports a singular element not belonging to $\tightid K S$.

\begin{Thm}
\label{t:algorithmA}
Let $A$ be an automaton given by a state diagram.  Then for any self-similar group $G$ containing $A$ with associated inverse semigroup $S$,  either $KA \cap \sing K S \setminus \tightid K S$ is non-empty for every field $K$, or $KA\cap \sing K S \setminus \tightid K S=\emptyset$ for all fields $K$ except for those with characteristic belonging to a finite set of primes. Moreover, there is an algorithm that on input $A$, outputs which of the two cases holds and in the second case outputs the finite set of characteristics of fields $K$ such that $KA \cap \sing K S \setminus \tightid K S$ is non-empty.
\end{Thm}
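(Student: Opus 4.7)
My plan combines Corollary~\ref{c:find.sing}, which translates non-emptiness of $KA \cap \sing{K}{S} \setminus \tightid{K}{S}$ into a linear-algebraic question about 0/1 integer matrices, with Proposition~\ref{p:smith}, which controls how ranks of integer matrices vary with the characteristic of $K$.

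First I would algorithmically construct the Schreier graph $\Sg_A$: starting from the equality relation $\equiv_\varepsilon$ on the finite set $A$ and iterating the left action $\equiv \mapsto x \cdot \equiv$, one enumerates all reachable equivalence relations in $V$ together with the edges of $\Sg_A$ in finite time, since $A$ carries only finitely many equivalence relations. Lemma~\ref{l:essential} and Corollary~\ref{c:one.min.comp} then furnish standard graph-theoretic procedures for extracting $\Vess$ and the unique minimal component $\Vmin$. Next I encode the equations: for each $\equiv \in V$ form the 0/1 integer matrix $M_\equiv$ with columns indexed by $A$ and one row per $\equiv$-class (a $1$ when the column index belongs to the row's class); let $M_S$ be the vertical concatenation of the $M_\equiv$ for $\equiv \in \Vmin$, and for each $\equiv \in \Vess \setminus \Vmin$ let $N_\equiv$ be the augmentation of $M_S$ by $M_\equiv$. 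By Corollary~\ref{c:find.sing}, $KA \cap \sing{K}{S} \setminus \tightid{K}{S} \neq \emptyset$ if and only if $\operatorname{rank}_K N_\equiv > \operatorname{rank}_K M_S$ for some $\equiv \in \Vess \setminus \Vmin$.

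Applying Proposition~\ref{p:smith} to $M_S$ and to each $N_\equiv$, I compute their $\mathbb{Q}$-ranks in polynomial time together with a finite, explicitly computable set $P$ of primes outside of which every $\mathbb{Z}_p$-rank agrees with its $\mathbb{Q}$-rank. For any field $K$ whose characteristic is either $0$ or a prime not in $P$, whether the strict rank inequality holds for a given $\equiv$ is therefore determined entirely by the $\mathbb{Q}$-computation. Two situations arise. If no $\equiv$ yields a strict inequality over $\mathbb{Q}$, then $KA \cap \sing{K}{S} \setminus \tightid{K}{S} = \emptyset$ for every field with characteristic outside $P$, and direct inspection at each $p \in P$ identifies the exact finite set of prime characteristics where it is non-empty: this is the second case of the dichotomy. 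Otherwise, some $\equiv$ realizes the strict inequality over $\mathbb{Q}$, so non-emptiness holds for every field with characteristic outside $P$, and a finite check at each $p \in P$ completes the picture, placing us in the first case.

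\textbf{The main obstacle} is making the second alternative land sharply in Case~I rather than in some ``cofinite-but-not-all'' intermediate regime: one must show that whenever the $\mathbb{Q}$-rank inequality is satisfied by some $\equiv \in \Vess \setminus \Vmin$, then for \emph{every} prime $p$ there is some $\equiv$ (possibly different) realizing the $\mathbb{Z}_p$-rank inequality. The strategy is to package the maps $\phi_\equiv$ for $\equiv \in \Vess \setminus \Vmin$ as a single $\mathbb{Z}$-linear map $\Phi$ from the lattice of integer solutions of $E_S$ to $\bigoplus_{\equiv \in \Vess \setminus \Vmin} \mathbb{Z}^{A/\equiv}$, apply Smith normal form to $\Phi$, and thereby select an integer solution $v$ of $E_S$ whose $\Phi$-image has entries with greatest common divisor $1$; the reduction of $v$ modulo any prime $p$ is then a witness that $\operatorname{rank}_{\mathbb{Z}_p} N_\equiv > \operatorname{rank}_{\mathbb{Z}_p} M_S$ for the $\equiv$ singled out by the nonzero coordinate.
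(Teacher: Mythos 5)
Your algorithmic skeleton---building $\Sg_A$, extracting $\Vess$ and the unique minimal component, encoding Corollary~\ref{c:find.sing} as a rank comparison of integer matrices, and invoking Proposition~\ref{p:smith} to isolate finitely many exceptional primes---is exactly the paper's, and that part is fine (whether you concatenate all the extra equations into one matrix $B'$ or test each $N_\equiv$ separately is immaterial). The gap is in the step you yourself flag as the main obstacle, and your proposed fix does not work. You claim Smith normal form lets you ``select an integer solution $v$ of $E_S$ whose $\Phi$-image has entries with greatest common divisor $1$.'' That requires the first elementary divisor $d_1$ of $\Phi$ to equal $1$; if $d_1>1$ then every vector in $\Phi(L)$ has content divisible by $d_1$, and reduction modulo a prime $p\mid d_1$ kills all of $\Phi(L)$. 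Nothing in your argument rules this out, and the underlying linear-algebra assertion is false for general systems of equivalence-class equations: take $A=\{1,2,3,4\}$, let $E_S$ come from the two partitions $\{\{1,2\},\{3,4\}\}$ and $\{\{1,3\},\{2,4\}\}$, and let the extra relation be $\{\{1,4\},\{2,3\}\}$. Over $\mathbb Q$ the row $(1,0,0,1)$ is not in the span of $(1,1,0,0),(0,0,1,1),(1,0,1,0),(0,1,0,1)$ (it pairs to $2\neq 0$ with $(1,-1,-1,1)$), so the strict rank inequality holds; over $\mathbb Z_2$ it equals $(0,0,1,1)+(1,0,1,0)$, so the inequality fails. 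Thus the dichotomy ``non-empty over $\mathbb Q$ implies non-empty over every $\mathbb Z_p$'' is not a formal consequence of the shape of the equations, and you use no special property of the relations $\equiv_w$ that would exclude such configurations. (A secondary issue: the kernel of $B$ over $\mathbb Z_p$ need not coincide with the reduction of the integer kernel $L$, so reasoning only about reductions of integer solutions does not even account for all of $KA\cap\sing{K}{S}$ when $\ch K=p$.)

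The paper closes exactly this gap with an input external to the matrices: the proof of \cite[Corollary~5.14]{simplicity} shows that any $a\in \sing{\mathbb Q}{S}\setminus\tightid{\mathbb Q}{S}$ admits a scalar multiple $da$ with integer coefficients whose image modulo $p$ lies in $\sing{\mathbb Z_p}{S}\setminus\tightid{\mathbb Z_p}{S}$, for every prime $p$. The delicate point there is that the reduction stays \emph{outside the tight ideal}, which is a statement about the ideals $\sing{K}{S}$ and $\tightid{K}{S}$ themselves rather than about ranks of $B$ and $B'$, and it is precisely what forces the first alternative to be ``non-empty over every field'' rather than merely ``non-empty outside a finite set of primes.'' You need either to import that result or to identify and exploit a genuine structural feature of the relations $\equiv_w$ arising from automata; Smith normal form alone will not do it.
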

\begin{Proof}
First note that
the proof of~\cite[Corollary~5.14]{simplicity} shows
that if $a\in \sing {\mathbb Q} S\setminus \tightid {\mathbb Q} S$, then for each prime $p$, there is a multiple $da\in \mathbb Z_0S$ with $d\in \mathbb Q\setminus \{0\}$ such that $da$ maps to an element of $\sing {\mathbb Z_p} S\setminus \tightid {\mathbb Z_p} S$.

The state diagram of the automaton of $A$ lets us compute the (left) action of elements of $A$ on $X$ and the (right) action of $X^\ast$ on $A$. The algorithm then is as follows:

Step 1: Build the graph $\Sg_A$ recursively starting from the vertex of the equality relation. For the recursion, for each vertex $\equiv$ that we have constructed and each letter $x \in X$, we compute $x \cdot{\equiv}$; if this is an existing vertex, we add the respective edge ${\equiv} \xrightarrow{\,\,x\,\,}\ x \cdot{\equiv}$, otherwise we add both the new vertex and the edge. This procedure eventually stops as $\Sg_A$ is finite.

Step 2: Compute the vertices $\Vmin$ of the minimal component
 and from these the set of linear equations $E_{S}$.

Step 3: List all the vertices of $\Vess\setminus \Vmin$: $\equiv_1, \ldots, \equiv_m$.  Let $B$ be the coefficient matrix for the system $E_{S}$ and let $B'$ be the coefficient matrix for the system $\bigcup_{i=1}^m E_{\equiv_i}\cup E_{S}$ (over $\mathbb Z$); these are integer matrices. It follows from Corollary~\ref{c:find.sing} that $KA\cap \sing K S \setminus \tightid K S$ is non-empty if and only if the rank of $B'$ over $K$ is greater than the rank of $B$ over $K$ (where we view integer matrices over any field by projecting into the prime field).

If the $\mathbb Q$-rank of $B'$ is larger than the $\mathbb Q$-rank of $B$, then, for any field $K$ of characteristic zero, we have  $KA\cap \sing K S \setminus \tightid K S$ is non-empty  and also, by the observation at the beginning of the proof, we have $\mathbb Z_pA\cap \sing {\mathbb Z_p} S\setminus \tightid {\mathbb Z_p} S$ for every prime $p$ and so $KA\cap \sing K S\setminus \tightid K S$ is non-empty for every field $K$.

On the other hand, if $B$ and $B'$ have the same rank over $\mathbb Q$, then by Proposition~\ref{p:smith}, there is a finite set of primes where the ranks of $B$ and $B'$ differ from their $\mathbb Q$-ranks, and, moreover, we can compute this finite set of primes and their ranks over each of these primes.  Thus we can output the finite set of primes $p$ for which the $\mathbb Z_p$-rank of $B'$ is bigger than that of $B$.
\end{Proof}

In practice the equations $E_{S,K}$ often have no non-trivial solutions and so one should check that first.  Note that both Steps~2 and~3 can be done in polynomial time in the size of $\Sg_A$; but Step~1 is more complicated, as there can be many equivalence relations on $A$, and so we currently have no non-trivial time bound on Step~1 or size bound on $\Sg_A$.

\begin{Rem}
The algorithm is constructive in the sense that if $KA \cap \sing K S \setminus \tightid K S$ is non-empty, we can also find an element by finding a solution of $E_{S, K}$ which does not solve some $E_{S,\equiv_i}$.
\end{Rem}

We now arrive at one of our main results:

\begin{Thm}
\label{t:contracting}
Let $G$ be a contracting self-similar group acting over a finite alphabet $X$ given by the state diagram of a finite automaton $A$ generating $G$. Then then the simplicity of the Nekrashevych algebra $\N{K}{G}{X}$ depends only on the characteristic of $K$: it is either non-simple over all characteristics, or it is simple over all fields but those of finitely many positive characteristics. Furthermore, there is an algorithm which decides whether we are in the first case or, in the second case, outputs the finite set of characteristics  such that $\N{K}{G}{X}$ is not simple.
\end{Thm}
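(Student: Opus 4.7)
The plan is to assemble this theorem from three ingredients already at our disposal, essentially as a corollary. The first ingredient is the algorithmic computation of the nucleus: as recalled in Section~\ref{s:ssg}, starting from any automaton $A$ generating a contracting group $G$, one iteratively sets $A_{n+1}=A_n \cup B_n$ where $B_n$ is the set of essential states of $A_n^2\setminus A_n$, with $A_0=A\cup A^{-1}$, and this sequence stabilizes because the nucleus is finite. Both the individual steps (computing sections, testing essentialness via Lemma~\ref{l:essential}) and the termination test (via the decidable characterization of nuclei) are algorithmic. Hence from the input state diagram of $A$ we can effectively produce the nucleus $N$ of $G$.

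The second ingredient is Corollary~\ref{c:simplereducedtoKN}, which reduces the question of simplicity of $\N{K}{G}{X}$ to checking whether $KN \cap \sing{K}{S} \setminus \tightid{K}{S}$ is empty. This converts a question about an infinite-dimensional algebra into one about elements supported on the finite set $N$, which is precisely the setup in which the combinatorial machinery of the simplicity graph $\Sg_A$ and the linear systems $E_\equiv$, $E_S$ applies.

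The third and decisive ingredient is Theorem~\ref{t:algorithmA}. Applied to the nucleus $N$ in place of the automaton $A$, that theorem gives exactly the required dichotomy: either $KN \cap \sing{K}{S} \setminus \tightid{K}{S}$ is non-empty for every field $K$, or it is non-empty precisely for those fields $K$ whose characteristic lies in a computable finite set of primes $\mathcal{P}$. By Corollary~\ref{c:simplereducedtoKN}, the first alternative means $\N{K}{G}{X}$ is non-simple for every $K$, while the second means $\N{K}{G}{X}$ is simple exactly when $\mathrm{char}(K)\notin \mathcal{P}$. In particular, simplicity depends only on $\mathrm{char}(K)$, and in the second case the exceptional characteristics form a finite set of positive primes (characteristic zero is never exceptional, as a difference in $\mathbb{Q}$-ranks would propagate to every prime by the argument opening the proof of Theorem~\ref{t:algorithmA}).

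The overall algorithm is therefore the composition of the nucleus-computation procedure with the algorithm of Theorem~\ref{t:algorithmA} applied to $N$. There is no real obstacle to overcome: all the substantive work has already been carried out in building $\Sg_N$, setting up the linear systems $E_\equiv$ indexed by $\Vess \setminus \Vmin$ and $E_S$ indexed by $\Vmin$, and invoking Smith normal form (Proposition~\ref{p:smith}) to identify the finitely many characteristics at which the rank comparison between the two coefficient matrices can jump. The only thing left to write is a short paragraph chaining these statements together.
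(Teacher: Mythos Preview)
Your proposal is correct and follows essentially the same route as the paper: compute the nucleus from the generating automaton using the procedure recalled in Section~\ref{s:ssg}, then invoke Corollary~\ref{c:simplereducedtoKN} together with Theorem~\ref{t:algorithmA} applied to the nucleus. The paper's proof is a two-sentence version of exactly what you wrote.
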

\begin{proof}
There is an well-known algorithm (described in Section~\ref{s:ssg}) that computes the nucleus of $G$ from $A$ whenever $G$ is contracting.  The result then follows from Corollary~\ref{c:simplereducedtoKN} and Theorem~\ref{t:algorithmA}.
\end{proof}

We do not know of an example of  a (non-contracting) self-similar group over a finite alphabet whose Nekrashevych algebra is simple in characteristic $0$ but is non-simple over fields of infinitely many positive characteristics.

A natural question arises as to whether the Nekrashevych $C^\ast$-algebra of a self-replicating, contracting group $G$ over a finite alphabet is simple if and only if the Nekrashevych algebra of $G$ over the complex numbers is simple (simplicity of the complex Nekrashevych algebra is known to be a necessary condition~\cite{nonhausdorffsimple}).  Note that the tight groupoid of the inverse semigroup $S$ is amenable in this case and so its reduced $C^\ast$-algebra is the Nekrashevych $C^\ast$-algebra.  In~\cite{nonhausdorffsimple} it was shown that the Nekrashevych $C^\ast$-algebra of the Grigorchuk group is simple.  More generally, Yoshida~\cite{multispinalCstar} has shown the question has a positive answer for the class of multispinal groups defined in Section~\ref{s:multispinal}.

To close this section, we characterize the Hausdorff property of the groupoid associated to a contracting group $G$ in terms of the simplicity graph. In particular, if the groupoid is Hausdorff, then all essential vertices of the simplicity graph belong to the minimal component, and thus Theorem~\ref{t:contract.crit} recovers the simplicity of the Nekrashevych algebra. The converse of this statement does not hold (we shall see an example later coming from Gupta-Sidki groups).

\begin{Prop}\label{p:NHd}
Let $G$ be a contracting self-similar group and let $A$ be a finite automaton containing the nucleus $N$ of $G$.   Then the groupoid associated to $G$ is Hausdorff if and only if, there exists $n\geq 0$, such that for all $w\in X^\ast$ of length $n$,  we have $|V_w|=1$ in $\Sg_A$.  In particular, if the groupoid is Hausdorff, then $\Vmin=\Vess$.
\end{Prop}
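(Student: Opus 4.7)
The plan is to translate between the two characterizations by means of Proposition~\ref{p:recursive.def}: for $g, h \in A$, the relation $g \eq{w} h$ is equivalent to $gw = hw$ in $M$, and hence to $k := h^{-1}g$ strongly fixing $w$. This recasts the Schreier-graph condition into the strongly-fixed-word framework of Proposition~\ref{p:Hd}, so both directions of the equivalence reduce to matching finiteness of the sets $F_k$ with the right-extension stability of $\eq{w}$.

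For the $(\Leftarrow)$ direction I would apply this translation with $h = 1$ and $g$ ranging over $N \subseteq A$. The set $\{w : gw = w\}$ is upward closed under right concatenation: if $g$ strongly fixes $w$ then $g|_w = 1$, so $gwy = wy$. By Proposition~\ref{p:reset.word} and Corollary~\ref{c:one.min.comp}, the hypothesis $|V_w| = 1$ for all $|w| = n$ (and hence, by Proposition~\ref{p:reset.word}, for all $|w| \geq n$) forces $\eq{w}$ to coincide with the unique singleton minimum-component relation $R$ whenever $|w| \geq n$. Thus whether $(g, 1) \in \eq{w}$ for $|w| \geq n$ depends only on whether $(g, 1) \in R$: either every $w$ of length $\geq n$ is strongly fixed (and then the minimal prefixes of $\{w : gw = w\}$ all have length at most $n$, so $F_g$ is finite), or no such $w$ is strongly fixed (and then upward closure forces $\{w : gw = w\} = \emptyset$). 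In either case $F_g$ is finite for every $g \in N$, which by the reduction in the first paragraph of the proof of Proposition~\ref{p:Hd} is enough to conclude Hausdorffness.

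For the $(\Rightarrow)$ direction, Hausdorffness gives a finite $F_k$ for each $k \in G$, so $n := \max\{|u| : u \in F_{h^{-1}g},\, g, h \in A\}$ is finite. For $|w| \geq n$ and any $y \in X^\ast$, the length-$n$ prefixes of $w$ and of $wy$ coincide, so they simultaneously lie in or avoid $F_{h^{-1}g} X^\ast$; hence $\eq{w} = \eq{wy}$ for all $y$, giving $|V_w| = 1$.

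For the ``in particular'' clause, Corollary~\ref{c:one.min.comp} combined with Proposition~\ref{p:reset.word} shows that $\Vmin$ is always a singleton $\{R\}$, whose unique vertex carries loops for every letter and no other outgoing edges (since $|V_u|=1$ precludes reaching any new vertex). Under Hausdorffness, any directed cycle in $\Sg_A$ based at some $\equiv'$, concatenated with a path from $\eq{\varepsilon}$ to $\equiv'$, yields arbitrarily long paths from $\eq{\varepsilon}$ ending at $\equiv'$; once the length is at least $n$, the endpoint must equal $R$, forcing $\equiv' = R$. Every essential vertex is reachable from such a cycle, and since $R$ has no outgoing non-loop edges the essential vertex must coincide with $R$, giving $\Vess = \Vmin$. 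The main obstacle I anticipate is the case analysis in the $(\Leftarrow)$ direction: distinguishing the positive case (strongly fixed set contains $X^{\geq n}$) from the negative case (strongly fixed set empty) hinges on carefully combining upward closure with the absence of long strongly fixed words.
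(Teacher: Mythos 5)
Your forward direction is fine, and in fact it is a cleaner, direct argument than a contradiction/compactness argument: taking $n$ to be the maximal length of a word in any $F_{h^{-1}g}$ with $g,h\in A$, and observing that for $|w|\geq n$ membership of $w$ in $F_{h^{-1}g}X^\ast$ depends only on the length-$n$ prefix of $w$, does give ${\eq w}={\eq{wy}}$ for all $y$, hence $|V_w|=1$. The translation you use, that $g\eq{w}h$ iff $h^{-1}g$ strongly fixes $w$, is also correct via Proposition~\ref{p:recursive.def}.

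The backward direction, however, rests on a false claim: that $\Vmin$ is a singleton $\{R\}$ and that ${\eq w}=R$ for every $w$ of length at least $n$. The condition $|V_u|=1$ says $V_u=\{{\eq u}\}$, i.e.\ that $\rho(u)$ is a synchronizing word for $\Sg_A$ (all paths labeled $\rho(u)$ end at the common vertex ${\eq u}$); it does not say that ${\eq u}$ is independent of $u$, and Corollary~\ref{c:one.min.comp} asserts uniqueness of the minimal strongly connected \emph{component}, not that this component is a single vertex. The Basilica group is a counterexample inside this very paper: $|V_w|=1$ for all $|w|=3$, yet $\Vmin$ has six vertices (Figure~\ref{f:basilica}). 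Consequently your dichotomy ``either every $w$ of length $\geq n$ is strongly fixed by $g$, or none is'' fails --- for the Basilica generator $a$, the length-$3$ words beginning with $0$ are strongly fixed while those beginning with $1$ are not --- so your case analysis does not produce a finite $F_g$. What the hypothesis actually yields, via Proposition~\ref{p:reset.word}, is only the right-extension stability of each individual relation: ${\eq{wv}}={\eq w}$ for each fixed $w$ of length $n$ and every $v$. Combined with Proposition~\ref{p:recursive.def} this shows that any strongly fixed word of length greater than $n$ has its length-$n$ prefix strongly fixed, so one may take $F_g$ to be the set of strongly fixed words of length at most $n$; this is the missing step. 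The same misconception infects your ``in particular'' argument (``$R$ has no outgoing non-loop edges'' is false in general); it is repaired by concluding only that the base vertex of a cycle, being the endpoint of arbitrarily long paths from equality, lies in $\Vmin$, and then using that no edges leave the minimal component, so every vertex reachable from a cycle also lies in $\Vmin$.
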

\begin{proof}
Suppose first that $|V_w|=1$ whenever $|w|=n$.  By the proof of Proposition~\ref{p:Hd}, it suffices to show that, for each $g\in N$, there is a finite set $F_g$ such that $F_gX^\ast$ is the set of words strongly fixed by $g$.  Let $F_g$ be the set of words of length at most $n$ strongly fixed by $g$. It is clear that any word in $F_gX^\ast$ is strongly fixed by $g$, so we only need to show the converse. If $u$ of length greater than $n$ is strongly fixed by $g$, write $u=wx$ with $|w|=n$.  By assumption,  $|V_w|=1$ and so ${\equiv_w}={\equiv_u}$ by Proposition~\ref{p:reset.word}.  By Proposition~\ref{p:recursive.def}, $gu=u$ in $M=X^\ast G$ implies that $g\equiv_u 1$, and hence $g\equiv_w 1$. Therefore,  $gw=w$ in $M$ by another application of Proposition~\ref{p:recursive.def}.  Thus $w\in F_g$ and $u\in F_gX^\ast$.  Since $F_g$ is finite, we deduce that the groupoid associated to $G$ is Hausdorff.

Next suppose that the associated groupoid is Hausdorff and that, by way of contradiction, we can find words $w_1,w_2,\cdots$ in $X^\ast$ with $|w_1|<|w_2|<\cdots$ and $|V_{w_n}|>1$ for all $n\geq 1$.  For each $n$, choose $z_n$ with ${\equiv_{w_nz_n}}\neq {\equiv_{w_n}}$.  Since ${\equiv_{w_n}}\subseteq {\equiv_{w_nz_n}}$ by the proof of Proposition~\ref{p:reset.word}, we can find $g_n,h_n\in N$ with $g_n\equiv_{w_nz_n} h_n$, but $g_n\not\equiv_{w_n} h_n$.  Moreover, we may assume that $z_n$ is the shortest word such that $g_n\equiv_{w_nz_n} h_n$.  Since $A$ is finite, by passing to  subsequence and reindexing we may assume that there are $g,h\in A$ with $g=g_n$ and $h=h_n$ for all $n$.  Then $gw_nz_n=hw_nz_n$  in $M=X^\ast G$, for all $n\geq 1$, but $gw_nz'\neq hw_nz'$ for any proper prefix $z'$ of $z_n$ by Proposition~\ref{p:recursive.def}. Thus $h^{-1}g$ strongly fixes each $w_nz_n$, but no proper prefix of $w_nz_n$.  But since the groupoid is Hausdorff, there is by~\cite[Theorem~12.2]{ExPadKatsura} a finite set $F_{h^{-1}g}$ with $F_{h^{-1}g}X^\ast$ the set of words strongly fixed by $h^{-1}g$.  We must have $w_nz_n\in F_{h^{-1}g}$ for each $n$ (as no proper prefix of $w_nz_n$ is strongly fixed), but since $|w_n|\to \infty$, this is a contradiction.

For the final statement, assume that the associated groupoid is Hausdorff and that $n\geq 0$ is such that $|V_w|=1$ for all $w$ of length $n$.  Let $\equiv$ be an essential state.  By Lemma~\ref{l:essential} there is a directed path of length $n$ ending at $\equiv$ in $\Sg_A$. Since all vertices are reachable from equality, we may find a word $w$ with $|w|\geq n$ and ${\equiv}={\equiv_w}$.  Then $w=uv$ with $|u|=n$ and hence $|V_w|=1$ by Proposition~\ref{p:reset.word}.  Thus $\equiv$ is minimal by Corollary~\ref{c:one.min.comp}.
\end{proof}

We remark that if $\Sg_A$ has $m$ vertices, one can take $n=m^m$ in Proposition~\ref{p:NHd} by a standard argument of finite semigroup theory.

Note that Proposition~\ref{p:NHd} and Theorem~\ref{t:contract.crit} give an alternative proof that the Nekrashevych algebra of a contracting group with Hausdorff groupoid is simple over any field.

\section{Elementary examples}\label{s:ex}
In this section we first consider the simplicity of the Nekrashevych algebra of the Basilica group.   The corresponding groupoid is Hausdorff (as was pointed out to us by Nekrashevych); nonetheless, we compute its  simplicity graph as an illustration of Proposition~\ref{p:NHd}. We also present a general construction of self-similar groups with a non-simple Nekrashevych algebra over every field.

\subsection{The Basilica group}

The \emph{Basilica group} $B$ is the iterated monodromy group of the polynomial $z^2-1$.  It  is a self-similar group acting on $X=\{0,1\}$ and was first studied by Grigorchuk and \.{Z}uk~\cite{BasilicaGZ}.  It is generated by the automaton with states the identity $1_B$ and $a,c$ given by $a(0w) =0w$, $a(1w) = 1c(w)$, $c(0w) = 1w$, $c(1w) = 0a(w)$ for $w\in \{0,1\}^\ast$.  The nucleus $N$ is well known to consist of the elements  $\{1_B,a,b,c,d,e,f\}$, where $b=a^{-1}$, $d=c^{-1}$, $e=ca^{-1}$ and $f=ac^{-1}$.  Note that $1_B,a,b$ act on $X$ identically and $c,d,e,f$ by the transposition.
The state diagram of the nucleus is in Figure~\ref{f:nucleus.basilica}.
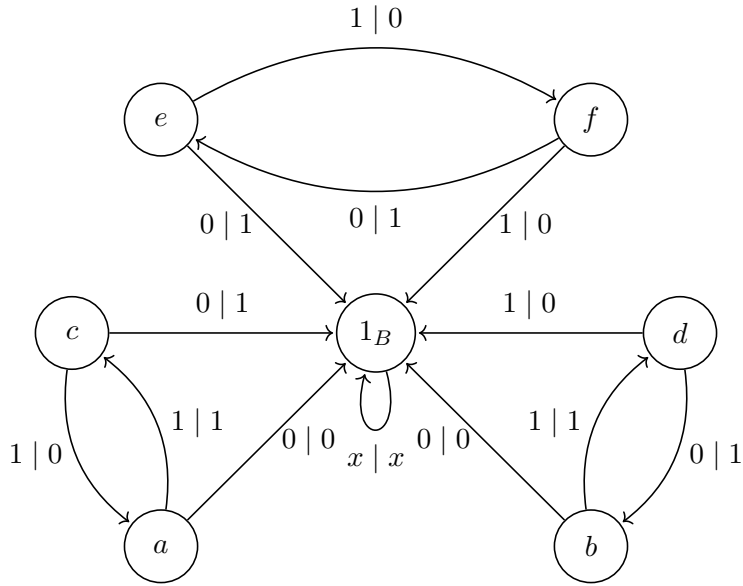
\begin{figure}[htbp]
\begin{center}
\begin{tikzpicture}[->,shorten >=1pt,%
auto,node distance=4cm,semithick,
inner sep=5pt,bend angle=30]
\tikzset{every loop/.style={min distance=10mm,looseness=10}}
\node[state] (I) {$1_B$};
\node[state] (E) [above left of=I] {$e$};
\node[state] (F) [above right of=I] {$f$};
\node[state] (C) [left of=I]{$c$};
\node[state] (A) [below left of=I]{$a$};
\node[state] (D) [right of=I]{$d$};
\node[state] (B) [below right of=I]{$b$};
\path   (I) edge [loop below] node [below] {$x\mid x$} (I)
        (A) edge [bend right] node [right] {$1\mid 1$} (C)
        (A) edge [right]      node [right] {$0\mid 0$} (I)
        (C) edge [bend right] node [left] {$1\mid 0$} (A)
        (C) edge [right]      node [above] {$0\mid 1$} (I)
        (B) edge [bend left] node [left] {$1\mid 1$} (D)
        (B) edge [left]      node [left] {$0\mid 0$} (I)
        (D) edge [bend left] node [right] {$0\mid 1$} (B)
        (D) edge [left]      node [above] {$1\mid 0$} (I)
        (E) edge [bend left] node [above] {$1\mid 0$} (F)
        (E) edge [left]      node [left] {$0\mid 1$} (I)
        (F) edge [bend left] node [below] {$0\mid 1$} (E)
        (F) edge [left]      node [right] {$1\mid 0$} (I);
\end{tikzpicture}
\end{center}
\caption{State diagram for the nucleus of the Basilica group~\label{f:nucleus.basilica}}
\end{figure}

It can easily be seen from Proposition~\ref{p:Hd} that the associated groupoid is Hausdorff: the graph $\mathcal H$ from that proposition is induced by the vertices $a, b$ and $1_B$, and contains no cycles other than the loops around $1_B$. Thus the Nekrashevych algebra is simple over any field  and   $\Vmin=\Vess$ by Proposition~\ref{p:NHd}.  Figure~\ref{f:basilica} displays the edge-labeled graph $\Sg_N$ (which can be readily verified by direct computation).

\begin{figure}[htbp]
\begin{center}
\begin{tikzpicture}[scale=1.5,semithick]
\tikzstyle{vertex}=[circle, draw, minimum size=32pt]
\tikzstyle{edge}=[->] 

\foreach \name/\pos/\label in {{=/(-4,0)/$=$}, {0/(-2,1)/$\eq 0$}, {1/(-2,-1)/$\eq 1$}, {00/(0,2)/$\eq{00}$}, {10/(0,0)/$\eq{10}$}, {100/(2,1)/$\eq{100}$}, {110/(2,-1)/$\eq{110}$}}
\node[vertex] (\name) at \pos {\label}; 

\foreach \from/\to/\label in {{=/0/0},{0/00/0}, {0/10/1}, {00/100/1}, {10/110/1}}
\path[edge] (\from) edge[bend left=15] node[above] {$\label$} (\to); 

\foreach \from/\to/\label in {{10/0/0}, {110/1/1}}
\path[edge] (\from) edge[bend left=15] node[below] {$\label$} (\to);

\path[edge] (=) edge[bend right=15] node[below] {$1$} (1);
\path[edge] (100) edge[bend left=15] node[right] {$1$} (110);
\path[edge] (100) edge[bend right=10] node[above] {$0$} (0);
\path[edge] (110) edge[bend left=30] node[below] {$0$} (0);
\path[edge] (1) edge[bend left=15] node[left] {$0$} (0);

\path[edge] (1) edge[in=250, out=290, looseness=10] node[below] {$1$} (1);
\path[edge] (00) edge[in=70, out=110, looseness=10] node[above] {$0$} (00);

\end{tikzpicture}
\end{center}
\caption{The graph $\Sg_N$ for the Basilica group\label{f:basilica}}
\end{figure}
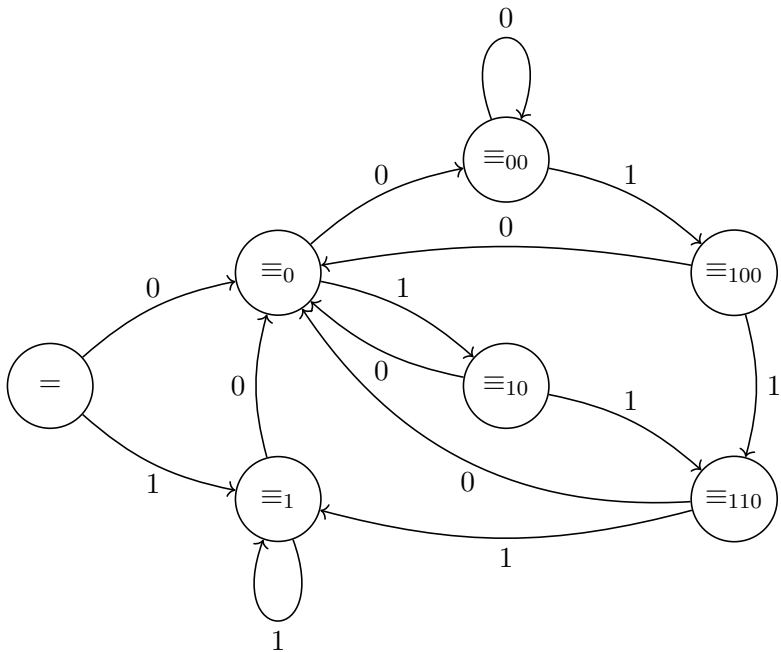

Notice that $\Vmin=V\setminus\{=\}$ and ${=}$ is inessential.    One can directly verify that $|V_w|=1$ for all words $w$ of length $3$, providing another proof that the groupoid associated to the Basilica group is Hausdorff via Proposition~\ref{p:NHd}.  Thus $\N{K}{B}{\{0,1\}}$ is simple over any field $K$ by Theorem~\ref{t:contract.crit} (or because its groupoid is Hausdorff).

\begin{Thm}
The Basilica group has a simple Nekrashevych algebra over every field.
\end{Thm}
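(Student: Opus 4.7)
The plan is to leverage the fact, already observed in the preceding discussion, that the groupoid associated to the Basilica group is Hausdorff, and to read off simplicity directly from Theorem~\ref{t:contract.crit} via the simplicity graph $\Sg_N$ displayed in Figure~\ref{f:basilica}.

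First I would invoke Proposition~\ref{p:Hd}: from the state diagram in Figure~\ref{f:nucleus.basilica}, the only edges with label of the form $x\mid x$ occur at $1_B$ (both loops), at $a$ (the edge $0\mid 0$ to $1_B$ and the edge $1\mid 1$ to $c$), and at $b$ (the edges $0\mid 0$ to $1_B$ and $1\mid 1$ to $d$); the states $c,d,e,f$ each act on $X$ by the non-identity permutation and so contribute no edges of this form. Restricting to vertices from which $1_B$ is reachable in the remaining graph yields $\mathcal{H}$ on the vertex set $\{1_B,a,b\}$ with the two loops at $1_B$ and two edges into $1_B$; the only directed cycles are the loops at $1_B$, so the hypothesis of Proposition~\ref{p:Hd} is satisfied and the associated groupoid is Hausdorff.

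Next I would alternatively, and as a cross-check, work directly with the simplicity graph. From Figure~\ref{f:basilica}, the vertex ${=}$ has no incoming edges, so by Lemma~\ref{l:essential}(1) it is inessential; all remaining vertices belong to the unique minimal strongly connected component (equivalently, by tracing the arrows, one checks that $|V_w|=1$ for every $w\in X^3$, so Proposition~\ref{p:NHd} again gives Hausdorffness and the equality $\Vmin=\Vess$). Hence $\Vess\setminus\Vmin=\emptyset$.

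Finally, applying Theorem~\ref{t:contract.crit} with $A=N$, the condition that $E_{\equiv,K}$ be a consequence of $E_{S,K}$ for every $\equiv\in\Vess\setminus\Vmin$ is vacuously satisfied for every field $K$, and therefore $\N{K}{B}{\{0,1\}}$ is simple over every field $K$. There is no substantive obstacle here: once the graph $\Sg_N$ has been computed (the only mildly tedious step, essentially a finite case check on the seven states of $N$ against each letter of $X$), the conclusion is immediate from Theorem~\ref{t:contract.crit}, and is consistent with the stronger Hausdorffness of the groupoid noted above.
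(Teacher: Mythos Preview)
Your proposal is correct and follows essentially the same route as the paper: verify Hausdorffness via Proposition~\ref{p:Hd} (or equivalently via $|V_w|=1$ for $|w|=3$ and Proposition~\ref{p:NHd}), read off from $\Sg_N$ that the equality vertex is the unique inessential vertex and that $\Vess\setminus\Vmin=\emptyset$, and conclude by Theorem~\ref{t:contract.crit}. The paper's discussion preceding the theorem makes exactly these observations.
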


\subsection{Some non-simple Nekrashevych algebras}
The first example of a non-simple Nekrashevych algebra over some field was given in~\cite{Nekrashevychgpd,nonhausdorffsimple}: the Nekrashevych algebra of the Grigorchuk group~\cite{grigroup} is not simple over fields of characteristic $2$, but is simple over fields of any other characteristic.  Nekrashevych showed (unpublished) that the Nekrashevych algebra of the Grigorchuk-Erschler group~\cite{Grigdegree,Erschler} is not simple over any field; more on this later.

Here is a straightforward method of constructing examples of self-similar groups with non-simple Nekrashevych algebras over any field.

If $G$ is a self-similar group over a finite alphabet $X$, define the \emph{trivial one-step inflation} of $G$ to be the self-similar action of $G$ on $X\coprod \{z\}$ obtained by extending the action on $X$ by putting $g(z)=z$ and $g|_z=g$ for any $g\in G$.

\begin{Prop}\label{p:trivial.inflate}
Let $G$ have a self-similar action over the finite alphabet $X$ and suppose that $KG\cap \tightid K S\neq 0$, where $S$ is the associated inverse semigroup.  Putting $Y=X\coprod \{z\}$ with the trivial one-step inflation action, $\N{K}{G}{Y}$ is not simple.
\end{Prop}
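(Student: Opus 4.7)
The plan is to fix a nonzero element $a\in KG \cap \tightid{K}{S}$ (which exists by hypothesis) and show that, when regarded as an element of $K_0S'$ (where $S'$ is the inverse semigroup associated to the inflated action of $G$ on $Y^\ast$), one has $a \in \sing{K}{S'} \setminus \tightid{K}{S'}$. By Theorem~\ref{t:simplicity}, this immediately implies that $\N{K}{G}{Y}$ is not simple. The hypothesis $a\in \tightid{K}{S}$, combined with Proposition~\ref{p:tightideal}, gives an $N\geq 0$ with $aX^N=0$ in $K_0S$; this is the only input used.

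The key observation driving everything is the triviality of $z$: since $g(z)=z$ and $g|_z=g$ for all $g\in G$, a routine induction on word length shows that for any $u\in Y^\ast$, writing $u'\in X^\ast$ for the subword obtained by deleting every occurrence of $z$, one has $g|_u=g|_{u'}$ (computed in the original action) and $g(u)$ is the word obtained from $u$ by fixing the $z$-positions and replacing each $X$-letter according to $g$'s action on $u'$. In particular, $gz=zg$ in $S'$ for every $g\in G$.

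To show $a\notin \tightid{K}{S'}$, the plan is to exhibit $z^\omega\in Y^\omega$ as a witness. For each $k\geq 0$, the commutation above gives $az^k=z^k a=\sum_{g\in \supp a}a_g\, z^k g$, and the elements $z^k g\in S'$ are pairwise distinct as $g$ varies, so $az^k\neq 0$. Thus no prefix of $z^\omega$ annihilates $a$, and Proposition~\ref{p:tightideal}(3) yields $a\notin \tightid{K}{S'}$.

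To show $a\in \sing{K}{S'}$, given $u\in Y^\ast$ pick any $v\in X^\ast$ with $|u'v|\geq N$. Then $au'v=0$ in $K_0S$, which, after expanding $\sum_{g}a_g\,g(u'v)\,g|_{u'v}$, is equivalent to $\sum_{g\in C} a_g=0$ for every equivalence class $C$ of the relation ``$g(u'v)=g'(u'v)$ and $g|_{u'v}=g'|_{u'v}$'' on $\supp a$. By the key observation, this relation implies the analogous one with $uv$ in place of $u'v$, so the equivalence classes controlling when $guv=g'uv$ in $S'$ are unions of the $u'v$-classes. Consequently the coefficient sums over the $uv$-classes also vanish, giving $auv=0$ in $K_0S'$, as required. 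The only delicate step is the inductive verification of how sections and images pass through inserted $z$'s, but this is purely a matter of unwinding the definitions.
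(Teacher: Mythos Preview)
Your proof is correct and follows essentially the same approach as the paper's. The paper also fixes $0\neq a\in KG\cap \tightid{K}{S}$ with $aX^N=0$, uses $gz=zg$ to get $az^n=z^na\neq 0$ (hence $a\notin \tightid{K}{S'}$), and then asserts $auX^N=0$ for every $u\in Y^\ast$ to conclude $a$ is singular; your equivalence-class argument simply makes rigorous the ``thus $auX^N=0$'' step that the paper leaves implicit.
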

\begin{proof}
Let $0\neq a\in KG\cap \tightid K S$.  Then $aX^N=0$ for some $N\geq 0$ by Proposition~\ref{p:tightideal}.  By construction of the action $gz=zg$ for all $g\in G$ and hence $az=za$.  Thus $auX^N=0$ for any $u\in Y^\ast$.  We conclude that $a$ is singular.  But $az^n=z^na\neq 0$ for all $n\geq 0$ and so $a$ is not in the tight ideal over the larger alphabet $Y$ by Proposition~\ref{p:tightideal}(4).  Thus $\N{K}{G}{Y}$ is not simple.
\end{proof}

The following construction is inspired by two of the constructions in~
\cite[Section~6]{simplicity}.

\begin{Cor}
\label{c:direct.new}
Let $H_1,H_2$ be non-trivial self-similar groups over the respective alphabets $X_1,X_2$.  Define a self-similar action of $G=H_1 \times H_2$ on $X_1 \coprod X_2$ by $(h_1,h_2)(x) = h_i(x)$ if $x\in X_i$, for $i=1,2$, and
 \[(h_1,h_2)|_x = \begin{cases} (h_1|_x,1), & \text{if}\ x\in X_1\\ (1,h_2|_x), & \text{if}\ x\in X_2\end{cases}.\]  If $Y= X_1\coprod X_2\coprod \{z\}$ with the trivial one-step inflation action, then $\N{K}{G}{Y}$ is not simple for any field $K$.
\end{Cor}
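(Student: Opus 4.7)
The plan is to apply Proposition~\ref{p:trivial.inflate}: once I produce a non-zero element $a \in KG \cap \tightid{K}{S}$, where $S$ is the inverse semigroup attached to the action of $G = H_1 \times H_2$ on $X = X_1 \coprod X_2$, the non-simplicity of $\N{K}{G}{Y}$ follows immediately, since in the trivial one-step inflation $z$ commutes with $G$ in the associated monoid.

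The crucial structural feature of the direct-product action is that the sections decouple: for $x \in X_1$, one has $(1, h_2)(x) = x$ and $(1, h_2)|_x = (1,1)$, so $(1,h_2)\cdot x = x$ in the monoid $M = X^\ast G$; symmetrically $(h_1,1)\cdot x = x$ for every $x \in X_2$. With this in mind, I pick nontrivial $h_1 \in H_1$ and $h_2 \in H_2$ (which exist since both $H_i$ are non-trivial) and set
\[
a = \bigl((h_1,1) - 1\bigr)\bigl((1,h_2) - 1\bigr) = (h_1,h_2) - (h_1,1) - (1,h_2) + 1 \in KG.
\]
By the decoupling observation, the right factor annihilates every $x \in X_1$ and the left factor annihilates every $x \in X_2$, so $ax = 0$ for all $x \in X$. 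Hence $aX^1 = 0$, and $a \in \tightid{K}{S}$ by Proposition~\ref{p:tightideal}(4).

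It remains to verify $a \neq 0$ in $KG$, which reduces to the elementary observation that $(h_1,h_2)$, $(h_1,1)$, $(1,h_2)$ and $1$ are four pairwise distinct elements of $H_1 \times H_2$ when $h_1, h_2 \neq 1$. Applying Proposition~\ref{p:trivial.inflate} to the trivial one-step inflation $Y = X \coprod \{z\}$ then yields that $\N{K}{G}{Y}$ is not simple for every field $K$.

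The main content of the argument is the choice of $a$: the factors $(h_1,1) - 1$ and $(1,h_2) - 1$ individually fail to be singular, each surviving on words composed of letters from the alphabet on which the corresponding $H_i$ acts non-trivially; but their product is killed by every single letter of $X$ because the two alphabets partition $X$ and each factor takes care of one half of the partition. I expect no serious obstacle once this element is written down: the verification rests entirely on the definition of the direct-product action and the defining relation $gx = g(x)g|_x$ for the inverse semigroup $S$.
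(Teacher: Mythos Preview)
Your proof is correct and essentially identical to the paper's: you choose the same element $a=((h_1,1)-1)((1,h_2)-1)=((1,1)-(h_1,1))((1,1)-(1,h_2))$, verify $aX=0$ via the decoupling of sections, and invoke Proposition~\ref{p:trivial.inflate}. The only cosmetic difference is that the paper explicitly records the commutativity of the two factors to justify that the left factor kills $x\in X_2$, whereas you implicitly rely on $(h_1,1)$ fixing every word beginning with a letter of $X_2$; both are valid.
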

\begin{proof}
By Proposition~\ref{p:trivial.inflate}, it suffices to show that if $S$ is the inverse semigroup associated to the action of $G$ on $X=X_1\coprod X_2$, then $KG\cap \tightid K S\neq 0$.  Let $a=((1,1)-(h_1,1))((1,1)-(1,h_2))= ((1,1)-(1,h_2))((1,1)-(h_1,1))\in KG$ where $1\neq h_1\in H_1$ and $1\neq h_2\in H_2$.  Then $((1,1)-(h_1,1))x=0$ for all $x\in X_2$ and $((1,1)-(1,h_2))x=0$ for all $x\in X_1$ by construction.  Thus $aX=0$, and so $0\neq a\in KG\cap \tightid K S$, as required.
\end{proof}

For the above construction, we can just take $H_1=H_2=\mathbb Z_2$ acting over $\{0,1\}$ by acting on the first letter only.  Note that the tight groupoids of the inverse semigroups associated to the groups in Corollary~\ref{c:direct.new} are minimal and effective, but have non-simple algebras over every field.

\section{Multispinal groups}\label{s:multispinal}

In this section, we consider a construction of contracting self-similar groups that generalizes self-replicating spinal automaton groups~\cite{spinalgroups} and, in particular, generalizes a construction of \v{S}uni\'{c}~\cite{sunicgroups}, which produces a natural family of self-similar groups containing the  Grigorchuk group~\cite{grigroup}.  We call self-similar groups obtained in this fashion \emph{multispinal groups}.  Multispinal groups also include the Gupta-Sidki $p$-groups~\cite{GuptaSidki}, \textsf{GGS}-groups~\cite{BGSbranch} and multi-edge spinal groups~\cite{multiedgespinal}.

\subsection{Construction}

If $G,H$ are groups, recall that $\mathrm{Aut}(G)$ acts on the right of $\mathrm{Hom}(G,H)$ via precomposition.
Let $X$ be a finite set.  A group $H$ acts \emph{freely} on $X$ if the stabilizer of each point is trivial; in particular, a free action is faithful.    The data needed to build a multispinal group over the alphabet $X$ are a finite group $G$, a finite group $H$ with a free left action on $X$ and a mapping $\Phi\colon X\to \mathrm{Aut}(G)\cup \mathrm{Hom}(G,H)$ meeting the following requirements:
\begin{enumerate}
  \item [(S1)] $\Phi(X)\cap \mathrm{Aut}(G)\neq \emptyset$;
  \item [(S2)] $\Phi(X)\cap \mathrm{Hom}(G,H)\neq \emptyset$;
  \item [(S3)] If $A=\langle \Phi(X)\cap \mathrm{Aut}(G)\rangle$ and $B=(\Phi(X)\cap \mathrm{Hom}(G,H))\cdot A$ (via the right action of $\mathrm{Aut}(G)$ on $\mathrm{Hom}(G,H)$), then $\bigcap_{\lambda\in B}\ker \lambda=\{1\}$.
\end{enumerate}

We define an automaton $\mathcal{A}$ with state set $(G\coprod H)/{\sim}$ where $\sim$ is the equivalence relation identifying the identities $1_G$ of $G$ and $1_H$ of $H$ into a class that we denote by $1$.  The action and sections are defined by the followings rules. If $h\in H$, then it acts on $X$ via the given action of $H$ on $X$ and $h|_x=1$ for all $x\in X$.  If $g\in G$, we put
\begin{gather*}
g(x) = x,\quad g|_x = \Phi(x)(g),\quad  \forall x\in X.
\end{gather*}
Notice that if $g=1$, then this gives the identity map on $X^\ast$, and so there is no ambiguity in the definition of the state $1$.
When $\Phi(x) \in \mathrm{Aut}(G)$, we have $g|_x \in G$, whereas if $\Phi(x) \in \mathrm{Hom}(G,H)$, then $g|_x \in H$.
Let $\mathfrak G$ be the automaton group generated by the states of $\mathcal{A}$.

For example, the \v{S}uni\'{c} groups~\cite{sunicgroups} are the special case where, for a prime $p$,  $X=\mathbb Z_p$, $G=\mathbb Z_p^n$, $H=\mathbb Z_p$ acting on the left of itself and $\Phi(X)$ contains exactly one automorphism  of $G$ and one non-trivial homomorphism from $G$ to $H$
(with suitable restrictions to make (S3) hold).  In particular, the Grigorchuk group~\cite{grigroup} is the multispinal group with $G=\mathbb Z_2\times \mathbb Z_2$, $H=\mathbb Z_2$ and
\[\Phi(0)(x,y) = y,\quad \Phi(1)=\begin{pmatrix} 0 & 1\\ 1& 1\end{pmatrix}.\]

\begin{Prop}\label{p:contains.nuc}
The multispinal group $\mathfrak G$ is contracting with nucleus contained in $\mathcal{A}$.  The states $G$ and $H$ generate isomorphic copies of $G$ and $H$ in $\mathfrak G$, respectively.  The nucleus is $N= G\cup \bigcup_{x\in \Phi^{-1}(\mathrm{Hom}(G,H))}\Phi(x)(G)$.
\end{Prop}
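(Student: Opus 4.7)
The plan is to verify the three assertions in order, with (S1) and (S3) doing the crucial work. First I would establish the embeddings $G, H \hookrightarrow \mathfrak{G}$. For $H$, freeness of the $H$-action on $X$ makes any nontrivial $h \in H$ act nontrivially already on the first letter. For $G$, suppose $g \in G$ acts trivially on $X^\ast$ and trace sections along a word $w = x_1 \cdots x_n$. The section $g|_{x_1 \cdots x_i}$ stays in $G$ as long as every $\Phi(x_j)$ encountered so far is an automorphism, and at the first index $i$ with $\Phi(x_i) \in \mathrm{Hom}(G,H)$ it becomes $\Phi(x_i) \circ \Phi(x_{i-1}) \circ \cdots \circ \Phi(x_1)(g) \in H$; this element then acts trivially on the remaining letters and hence equals $1_H$ by freeness of the $H$-action. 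As the prefixes vary, the compositions $\Phi(x_i) \circ (\Phi(x_{i-1}) \cdots \Phi(x_1))$ that arise are exactly the elements of $B$, so (S3) forces $g = 1$.

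Next I would prove that $\mathfrak{G}$ is contracting with nucleus contained in $\mathcal{A}$, via the length function $\ell(g)$ equal to the minimum number of factors from $G \cup H$ needed to express $g$. In a minimum decomposition $g = s_1 \cdots s_\ell$, consecutive factors must lie in different groups (otherwise they would combine to shorten the expression), so the expression alternates and the number $k_G$ of $G$-factors satisfies $k_G \leq \lceil \ell/2 \rceil$. Computing $g|_x = s_1|_{v_1} \cdots s_\ell|_{v_\ell}$, the $H$-factors contribute the trivial section $1$ and drop out, while each $G$-factor contributes $\Phi(v_i)(s_i) \in G \cup H \subseteq \mathcal{A}$. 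Hence $\ell(g|_x) \leq k_G \leq \lceil \ell/2 \rceil$, which is strictly less than $\ell$ for $\ell \geq 3$, and $g|_x \in \mathcal{A}$ already for $\ell \leq 2$. Iterating, $g|_{X^n} \subseteq \mathcal{A}$ for all sufficiently large $n$.

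For the nucleus equality, I would verify both inclusions. For $\mathrm{nuc}(\mathfrak{G}) \subseteq N$, sections of any element of $\mathcal{A}$ already lie in $N$: for $g \in G$, $g|_x = \Phi(x)(g) \in G \cup \Phi(x)(G) \subseteq N$, and for $h \in H$, $h|_x = 1 \in G \subseteq N$; thus one extra step beyond $g|_{X^n} \subseteq \mathcal{A}$ gives $g|_{X^{n+1}} \subseteq N$, so $N$ is a nucleus candidate. For $N \subseteq \mathrm{nuc}(\mathfrak{G})$, I would produce, for each $m \in N$, a fixed element of $\mathfrak{G}$ whose sections at infinitely many depths contain $m$, which forces $m$ into every nucleus candidate. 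Pick $x_1 \in \Phi^{-1}(\mathrm{Aut}(G))$ (nonempty by (S1)) and let $\alpha = \Phi(x_1)$, of finite order $d$ in $\mathrm{Aut}(G)$. Then $g'|_{x_1^{md}} = \alpha^{md}(g') = g'$ for every $g' \in G$ and $m \geq 0$; similarly, for $\lambda(g_0) \in \Phi(x_0)(G)$ with $\Phi(x_0) = \lambda$, the element $g_0$ satisfies $g_0|_{x_1^{md} x_0} = \Phi(x_0)(\alpha^{md}(g_0)) = \lambda(g_0)$ for all $m \geq 0$.

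The main obstacle I anticipate is keeping the contracting argument rigorous: after one section, the alternating structure of a minimum decomposition is rearranged (since $G$-factors can become $H$-elements via a $\Phi$ in $\mathrm{Hom}(G,H)$), so one cannot reason directly on a new reduced form. Bounding $\ell(g|_x)$ by the unreduced count $k_G$ sidesteps this and delivers strict decrease for $\ell \geq 3$ with a clean landing in $\mathcal{A}$ for $\ell \leq 2$, so the induction goes through without having to analyze the new alternation pattern.
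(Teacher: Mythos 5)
Your proof is correct, and for the contracting and nucleus-identification steps it takes a genuinely different route from the paper. The paper disposes of those steps by verifying the automaton-theoretic criterion recalled in Section~\ref{s:ssg}: it checks $N=N^{-1}$, $N|_X=N$ and $N^2|_X\subseteq N$ and then invokes Nekrashevych's Lemma~2.11.2 and the discussion following it to conclude that $N$ is the nucleus. You instead prove contraction directly, via the length function $\ell$ and the estimate $\ell(g|_x)\leq\lceil \ell(g)/2\rceil$ coming from the alternating form of a minimal decomposition, and you establish the lower bound $N\subseteq \text{nucleus}$ by exhibiting, for each element of $N$, recurrent sections along the words $x_1^{md}$ and $x_1^{md}x_0$, where $\Phi(x_1)\in\mathrm{Aut}(G)$ has order $d$ (this is where (S1) enters). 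What your route buys is self-containedness: the inclusion $N\subseteq\text{nucleus}$, which the paper delegates entirely to the cited lemma, is made explicit, and the subtlety of whether $\langle N\rangle=\mathfrak G$ never arises because you work with the nucleus of $\mathfrak G$ throughout; the cost is a somewhat longer argument. The faithfulness step via (S3) and freeness of the $H$-action is essentially the same in both proofs (yours is the contrapositive of the paper's, with the additional correct observation that the positive products of the generators of the finite group $A$ exhaust $A$, so the compositions arising from prefixes are exactly the elements of $B$). One small point you use without stating: both the claim that the states of $G$ generate an isomorphic copy of $G$ and the claim that two consecutive $G$-factors in a minimal decomposition combine into one require first checking that $g\mapsto (\text{state }g)$ is a homomorphism, i.e., that $(gg')(xw)=g(g'(xw))$, which follows from $(gg')|_x=\Phi(x)(gg')=g|_xg'|_x$ and $g(x)=x$; the paper records this verification and you should too.
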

\begin{Proof}
The states from $H$ generate a copy of $H$, as $H$ acts faithfully on $X$.  Since $\Phi(x)$ is a homomorphism, for each $x\in X$, we have that $(gg')|_x=\Phi(x)(gg') = \Phi(x)(g)\Phi(x)(g')= g|_x g'|_x$ for all $g,g'\in G$.  Thus $(gg')(xw) = x(gg')|_x(w) = x(g|_xg'|_x)(w)=xg|_x(g'|_x(w)) = g(xg'|_x(w)) = g(g'(xw))$, and so the assignment of $g\in G$ to its corresponding state is a homomorphism.  This assignment is injective because if $1\neq g\in G$, then we can find $\lambda\in B$ with $\lambda(g)\neq 1$ by (S3).  But, by construction, there is $w\in X^*$ with $g|_w=\lambda(g)\neq 1$.  Thus $G$ acts faithfully on $X^*$.  It now follows that $\mathcal{A}$ is closed under inversion with the inverse of the state $k\in G\cup H$ being given by $k^{-1}$.

We next verify that $N$ is the nucleus.   We begin by observing that $N=G|_X$:  for if $\Phi(x)\in \mathrm{Aut}(G)$, then, for any $g\in G$, $g|_x\in G$ and $g= \Phi(x)(g')=g'|_x$ for some $g'\in G$, and so $G|_{\Phi^{-1}(\mathrm{Aut}(G))}=G$. Therefore,
\[G|_X=G|_{\Phi^{-1}(\mathrm{Aut}(G))} \cup G|_{\Phi^{-1}(\mathrm{Hom}(G,H))}=G \cup \bigcup_{x\in \Phi^{-1}(\mathrm{Hom}(G,H))}\Phi(x)(G)=N.\]
As $H|_X=\{1\}$, it follows now that $N|_X=N$.      By~\cite[Lemma~2.11.2]{selfsimilar} and the discussion thereafter, to show that $N$ is the nucleus, it suffices to show that $N=N^{-1}$ and $N^2|_X\subseteq N$.   Note that $N$ is closed under inversion by definition. Since $G^2=G$, $H^2=H$, $G|_X=N$ and $H|_X=\{1\}$, it remains to show that if $g \in G$, $h \in H$, then $gh$ and $hg$ have sections in $N$. Indeed, for $x\in X$, we have that $(gh)|_x=g|_{h(x)}h|_x=g|_{h(x)} \in N$, by definition, and $(hg)|_x=h|_{g(x)}g|_x=g|_x \in N$ again.   This completes the proof.
\end{Proof}

In most, but not all, examples that we consider $\mathcal A$ will be the nucleus.  In any event, since $\mathcal A$ contains the nucleus, Theorem~\ref{t:contract.crit}  will allows us to use $\Sg_\mathcal A$ to determine simplicity of the Nekrashevych algebra of a multispinal group.

We remark that if $H$ is transitive on $X$ and $H=\langle \bigcup_{x\in \Phi^{-1}(\mathrm{Hom}(G,H))}\Phi(x)(G)\rangle$, then $\mathfrak G$ will be self-replicating (and hence infinite) and its associated ample groupoid will be amenable by~\cite{Nekcstar}.  If $|\Phi(X)\cap \mathrm{Aut}(G)|=1$, then $\mathfrak G$ is a spinal group~\cite{spinalgroups} and all self-replicating spinal automaton groups are of this form.  In particular, when $\mathfrak G$ is spinal, $\mathcal A$ is a bounded automaton and so $\mathfrak G$ is amenable by~\cite{boundedaut}.  Multispinal groups for which $\Phi(X)$ contains exactly one automorphism and one non-trivial homomorphism are called \textsf{G}-groups~\cite{BGSbranch}.

\subsection{Representation theory}
Representation theory will play a key role in our analysis of the simplicity of Nekrashevych algebras of multispinal groups.  The reader is referred to~\cite[Chapter~3.8]{LamBook} as a reference on the representation theory of finite groups.
Let $G$ be a finite group and $K$ an algebraically closed field whose characteristic does not divide $|G|$.  Then by Maschke's theorem $KG$ is semisimple~\cite[Theorem~6.1]{LamBook}.  Let $\widehat{G}$ be the set of equivalence classes of irreducible representations of $G$ over $K$. Any representation $\rho\colon G\to M_n(K)$ can be extended to a $K$-algebra homomorphism $\rho\colon KG\to M_n(K)$ in the usual way. The character $\chi_{\rho}\colon G\to K$ of $\rho$ is the mapping taking $g$ to the trace of $\rho(g)$.  The character determines the representation up to equivalence.

 We shall fix a representative of each class from $\widehat{G}$ and identify the class with its representative.  Then Wedderburn's theorem~\cite{LamBook} yields an isomorphism
\begin{gather*}
\Psi_G\colon KG\to \prod_{\rho\in \widehat{G}} M_{n_{\rho}}(K)\\  a\mapsto (\rho(a))_{\rho\in \widehat{G}}
\end{gather*}
where $n_{\rho}$ is the degree of the representation $\rho$.  Notice that $G$ is abelian if and only if $KG$ is commutative, if and only if $n_{\rho}=1$ for all $\rho\in \widehat{G}$.

\begin{Rem}\label{r:explicit}
If $\rho$ is an irreducible representation of $G$ and  \[e_{\rho} = \frac{n_{\rho}}{|G|}\sum_{g\in G}\chi_{\rho}(g^{-1})g,\]  then it is well known~\cite[Proposition~8.15]{LamBook} that $\rho(e_{\rho}) = I_{n_{\rho}}$ and $\psi(e_{\rho})=0$ for $\psi\in \widehat{G}\setminus \{\rho\}$.  Hence $\Psi_G(e_{\rho})$ is the identity of the Wedderburn component $M_{n_\rho}(K)$.
\end{Rem}

Let $N\lhd G$ be a normal subgroup and $\pi_N\colon G\to G/N$ the quotient map.  If $\rho\in \widehat{G/N}$, then $\rho\circ\pi_N\in \widehat{G}$ and so we can view $\widehat{G/N}$ as embedded in $\widehat{G}$ as those irreducible representations whose kernel contains $N$.  Then the following diagram commutes by definition:

\begin{equation*}\label{fourier}
\begin{tikzcd}
KG\ar{r}{\Psi_G}\ar{d}[swap]{\pi_N} & \displaystyle{\prod_{\rho\in \widehat{G}} M_{n_{\rho}}(K)\ar{d}{\nu_N}} \\
K[G/N]\ar{r}{\Psi_{G/N}} & \displaystyle{\prod_{\rho\in \widehat{G/N}}}M_{n_{\rho}}(K)
\end{tikzcd}
\end{equation*}
where $\nu_N$ is the projection and $\Psi_G$, $\Psi_{G/N}$ are the Wedderburn isomorphisms. Therefore, $\ker \pi_N = \Psi_G^{-1}\left(\prod_{\rho\notin \widehat{G/N}}M_{n_{\rho}}(K)\right)$.

\begin{Prop}
\label{p:separate.points}
Let $G$ be a finite group and $K$ an algebraically closed field whose characteristic does not divide $|G|$.  Let $B$ be a collection of normal subgroups of $G$ and let $\pi_N\colon KG\to K[G/N]$ be the projection for $N\in B$.  Then the following are equivalent:
\begin{enumerate}
\item $\bigcap_{N\in B} \ker \pi_N=0$;
\item for each irreducible representation $\rho\in \widehat{G}$, there exists $N\in B$ with $N\leq \ker \rho$.
\end{enumerate}
\end{Prop}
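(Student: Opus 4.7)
The plan is to read off the result directly from the commutative diagram displayed just before the proposition, once one records what the maps do on the factors of the Wedderburn decomposition.

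First I would observe that the commutative square, together with the fact that $\nu_N$ is the projection onto the factors indexed by $\widehat{G/N}\subseteq \widehat{G}$, gives the explicit description
\[
\ker \pi_N \;=\; \Psi_G^{-1}\!\left(\prod_{\rho\notin \widehat{G/N}} M_{n_\rho}(K)\right)
\;=\; \{\,a\in KG : \rho(a)=0 \ \text{for every } \rho\in \widehat{G}\ \text{with}\ N\leq \ker\rho\,\}.
\]
Intersecting over $N\in B$ yields
\[
\bigcap_{N\in B}\ker\pi_N \;=\; \Psi_G^{-1}\!\left(\prod_{\rho\in \widehat{G}\setminus \widehat B}\, M_{n_\rho}(K)\right),
\]
where $\widehat B := \{\rho\in\widehat{G} : N\leq \ker\rho \text{ for some } N\in B\}$. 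Since $\Psi_G$ is an isomorphism, the intersection is $0$ iff $\widehat B = \widehat G$, which is precisely condition (2).

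To spell out the two implications cleanly: for (2)$\Rightarrow$(1), if every $\rho\in\widehat G$ factors through some $G/N$ with $N\in B$, then an element $a$ of the intersection satisfies $\rho(a)=0$ for all $\rho\in\widehat G$, hence $\Psi_G(a)=0$, hence $a=0$. For (1)$\Rightarrow$(2), suppose some $\rho\in\widehat G$ has $N\not\leq \ker \rho$ for every $N\in B$; I would invoke the central idempotent $e_\rho$ recalled in Remark~\ref{r:explicit} (which exists because the characteristic of $K$ does not divide $|G|$), noting that $\Psi_G(e_\rho)$ is the identity in the $\rho$-component and zero in every other component. Then for each $N\in B$, $\rho\notin \widehat{G/N}$, so the $\rho$-component is killed by $\nu_N$, giving $\pi_N(e_\rho)=0$. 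Thus $0\neq e_\rho\in\bigcap_{N\in B}\ker\pi_N$, contradicting (1).

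The only subtlety is making sure the identification of $\widehat{G/N}$ with a subset of $\widehat G$ via $\rho\mapsto \rho\circ \pi_N$ matches the claim ``$N\leq \ker\rho$'', which is immediate from the universal property of the quotient. There is no real obstacle here — the heavy lifting (Maschke, Wedderburn, existence of the central idempotents $e_\rho$) is already in place, and the proof is essentially the translation of a kernel-of-intersection statement into the language of Wedderburn components.
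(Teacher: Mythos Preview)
Your argument is correct and follows essentially the same route as the paper: both compute $\bigcap_{N\in B}\ker\pi_N$ via the Wedderburn isomorphism as $\Psi_G^{-1}\bigl(\prod_{\rho\notin\bigcup_{N\in B}\widehat{G/N}}M_{n_\rho}(K)\bigr)$ and observe this vanishes iff $\widehat G=\bigcup_{N\in B}\widehat{G/N}$. Your additional step invoking $e_\rho$ to exhibit an explicit nonzero element in the failure case is a nice elaboration but not needed, since the product over an empty index set is already zero and over a nonempty one is not.
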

\begin{proof}
It follows from the above discussion that \[\bigcap_{N\in B}\ker \pi_N = \bigcap_{N\in B}\Psi_G^{-1}\left(\prod_{\rho\notin \widehat{G/N}}M_{n_{\rho}}(K)\right)=\Psi_G^{-1}\left(\prod_{\rho\in \widehat{G}\setminus \bigcup_{N\in B}\widehat{G/N}}M_{n_{\rho}}(K)\right).\]  This will be $0$ if and only if $\widehat{G}=\bigcup_{N\in B}\widehat{G/N}$, i.e., (2) holds.
\end{proof}

\subsection{Simplicity for Nekrashevych algebras of multispinal groups}
The main theorem of this subsection describes simplicity of Nekrashevych algebras of multispinal groups. We will then use the theorem to give examples of finitely generated, infinite $p$-groups of intermediate growth, which have simple Nekrashevych algebras over all fields of characteristic different than $p$, but non-simple algebras in characteristic $p$.  When $p=2$, this includes the Grigorchuk group~\cite{grigroup}, which was first handled in~\cite{nonhausdorffsimple,Nekrashevychgpd}. We also show that, for any finite set $\mathcal P$ of primes, there is a multispinal group whose Nekrashevych algebra is simple over precisely those fields whose characteristic does not belong to $\mathcal P$.  Since contracting groups with simple Nekrashevych algebras in characteristic $0$ can only fail to have simple algebras over finitely many prime characteristics by Theorem~\ref{t:contracting}, this shows that one cannot constrain these characteristics in any further way.

We begin with a lemma describing $\Sg_{\mathcal A}$ for a multispinal group.

\begin{Lem}\label{l:gen.sunic.graph}
Let $G$ and $H$ be finite groups with $H$ acting freely on $X$ and $\Phi\colon X\to \mathrm{Aut}(G)\cup \mathrm{Hom}(G,H)$ be  the data defining a multispinal group.  Let $A,B$ be as in \textrm{(S3)} and $\mathcal A=(G\coprod H)/\{1_G\sim 1_H\}$ be the associated automaton.   If $L\lhd G$, let $\equiv_L$ be the equivalence relation on $G$ into cosets of $L$, which we extend to $\mathcal A$ by putting each element of $H\setminus \{1\}$ into a singleton class.
\begin{enumerate}
\item  The vertices of $\Sg_{\mathcal A}$ are the equality relation and $\{{\equiv_{\ker \lambda}}:\lambda\in B\}$.
\item  If $\Phi(x)\in \mathrm{Hom}(G,H)$, then there is an edge ${\equiv}\xrightarrow{\,\, x\,\,}{\equiv_{\ker\Phi(x)}}$ for any vertex $\equiv$ of $\Sg_{\mathcal A}$.
\item If $\Phi(x)\in \mathrm{Aut}(G)$, then $x$ labels a loop at the equality relation.
\item  The labeled subgraph of $\Sg_{\mathcal A}$ with vertices $\{{\equiv_{\ker\lambda}}:\lambda\in B\}$ and edges labeled by $X_1=\Phi^{-1}(\mathrm{Aut}(G))$ is the quotient of the Schreier graph of $A$ acting on the right of $B$ with respect to the generating set $X_1$ by the label preserving map $\lambda\mapsto {\equiv_{\ker \lambda}}$.
\end{enumerate}
In particular, all vertices of $\Sg_{\mathcal A}$ are essential and $\Vmin = \{{\equiv_{\ker \lambda}}:\lambda\in B\}$.
\end{Lem}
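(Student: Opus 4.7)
The plan is a direct computation of $x \cdot {\equiv}$ for each letter $x \in X$ and each candidate equivalence relation $\equiv \in \{\equiv_\empty\} \cup \{\equiv_{\ker \lambda}: \lambda \in B\}$, using Proposition~\ref{p:recursive.def} and a case analysis on whether $\Phi(x)$ lies in $\mathrm{Aut}(G)$ or $\mathrm{Hom}(G,H)$. Parts (1)--(4) then fall out in order, and the ``in particular'' assertion follows from essentiality criteria together with a reachability analysis.

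The first step I would carry out is the observation that, because $H$ acts freely on $X$, for any distinct $h,h'\in H$ we have $h(x)\neq h'(x)$, and for any nontrivial $h\in H$ and any $g\in G$ we have $h(x)\neq x=g(x)$.  Consequently, in any relation $x\cdot\equiv$ with $\equiv$ one of our candidates, distinct elements of $H$ remain in distinct singleton classes and no element of $H\setminus\{1\}$ is equivalent to any element of $G$. Thus $x\cdot\equiv$ is determined by its restriction to $G$, which by Proposition~\ref{p:recursive.def} amounts to $\Phi(x)(g)\equiv\Phi(x)(g')$ for $g,g'\in G$.  If $\Phi(x)\in \mathrm{Aut}(G)$, then $\Phi(x)(g),\Phi(x)(g')\in G$ and one reads off $x\cdot \equiv_\empty = \equiv_\empty$, proving (3), and $x\cdot\equiv_{\ker\lambda}=\equiv_{\ker(\lambda\circ\Phi(x))}$; the latter is of the required form since $\lambda\circ\Phi(x)=\lambda\cdot\Phi(x)\in B$ as $\lambda\in B$ and $\Phi(x)\in A$.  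If $\Phi(x)\in \mathrm{Hom}(G,H)$, then $\Phi(x)(g),\Phi(x)(g')\in H$ are equivalent under $\equiv$ iff they are equal, so $x\cdot\equiv = \equiv_{\ker \Phi(x)}$ independently of the source vertex $\equiv$, proving (2).

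This shows $\{\equiv_\empty\}\cup\{\equiv_{\ker\lambda}:\lambda\in B\}$ is closed under the $X^\ast$-action on vertices.  For the reverse inclusion needed in (1), I would write a given $\lambda\in B$ as $\lambda=\Phi(y)\circ\alpha$ with $\Phi(y)\in \mathrm{Hom}(G,H)\cap \Phi(X)$ and $\alpha\in A$; since $A$ is a finite group, $\alpha = \Phi(x_1)\circ\cdots\circ\Phi(x_k)$ for some $x_1,\ldots,x_k\in X_1$ (converting any negative powers to positive ones via $\beta^{-1}=\beta^{|\beta|-1}$).  By the edge formula of the previous paragraph, the word $x_k\cdots x_1y$ labels a directed path from $\equiv_\empty$ through $\equiv_{\ker\Phi(y)}$ to $\equiv_{\ker\lambda}$.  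Part (4) is then a direct rereading of the formula $x\cdot\equiv_{\ker\lambda}=\equiv_{\ker(\lambda\circ\Phi(x))}$ for $x\in X_1$ as the image under $\lambda\mapsto \equiv_{\ker\lambda}$ of the Schreier-graph edge $\lambda\xrightarrow{\,x\,}\lambda\cdot\Phi(x)$.

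For the ``in particular'' claim: by (S1) and (3), $\equiv_\empty$ has a self-loop labeled by any $x$ with $\Phi(x)\in \mathrm{Aut}(G)$, while by (S2) and (2), each $\equiv_{\ker\Phi(y)}$ with $\Phi(y)\in\Phi(X)\cap\mathrm{Hom}(G,H)$ has a self-loop labeled $y$; as (2) shows every vertex has an outgoing edge landing in one such $\equiv_{\ker\Phi(y)}$, Lemma~\ref{l:essential} yields that all vertices are essential.  The reachability argument above shows $\{\equiv_{\ker\lambda}:\lambda\in B\}$ is strongly connected, while the case analysis shows every outgoing edge from this set remains inside it; hence it is the unique minimal strongly connected component, giving $\Vmin=\{\equiv_{\ker\lambda}:\lambda\in B\}$, whereas $\equiv_\empty$ is not minimal since (S2) supplies a letter $x$ whose outgoing edge leaves it.  There is no major obstacle; the main care needed is correctly extending $\equiv_L$ from $G$ to $\mathcal{A}=(G\sqcup H)/\sim$ and using freeness of the $H$-action to keep the $H$-part rigid under every iteration of $x\cdot$.
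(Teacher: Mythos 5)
Your computations for parts (1)--(4) are correct and follow essentially the same route as the paper: freeness of the $H$-action keeps $H\setminus\{1\}$ rigid in singleton classes, the case split on $\Phi(x)$ gives $x\cdot{\equiv}={\equiv_{\ker\Phi(x)}}$ when $\Phi(x)\in\mathrm{Hom}(G,H)$ and $x\cdot{\equiv_{\ker\lambda}}={\equiv_{\ker(\lambda\circ\Phi(x))}}$ when $\Phi(x)\in\mathrm{Aut}(G)$, and the factorization $B=(\Phi(X)\cap\mathrm{Hom}(G,H))\cdot A$ with $A$ a finite group supplies reachability of every $\equiv_{\ker\lambda}$ from the equality relation. (A cosmetic point: with the paper's conventions the path you describe reads the label $yx_1\cdots x_k$ in traversal order and ends at $\equiv_{x_k\cdots x_1y}$; the content is unaffected.)

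The ``in particular'' paragraph, however, contains a genuine logical slip. Essentiality in Lemma~\ref{l:essential} means reachable \emph{from} a directed cycle, i.e., it is witnessed by arbitrarily long paths \emph{ending} at the vertex. Your justification --- ``every vertex has an outgoing edge landing in one such $\equiv_{\ker\Phi(y)}$'' --- points the wrong way: an outgoing edge into a looped vertex says nothing about paths into the source (a source vertex feeding a looped vertex is not essential). The correct argument is already implicit in what you wrote: $\equiv_\empty$ carries a self-loop by (S1) and (3), and every vertex of $\Sg_{\mathcal A}$ is reachable from $\equiv_\empty$ (your reachability paragraph exhibits the paths), so every vertex is reachable from a cycle and hence essential. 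A second slip: the claim that ``$\equiv_\empty$ is not minimal since (S2) supplies a letter $x$ whose outgoing edge leaves it'' is false in general. If some $\lambda\in B$ is injective --- as happens, for instance, for the Gupta--Sidki groups --- then $\equiv_\empty={\equiv_{\ker\lambda}}$ and the equality relation \emph{does} belong to $\Vmin$; moreover, an edge leaving a vertex is irrelevant to minimality, since what matters is whether an edge leaves its strongly connected component. The identity $\Vmin=\{{\equiv_{\ker\lambda}}:\lambda\in B\}$ nevertheless holds in every case, because $\{{\equiv_{\ker\lambda}}:\lambda\in B\}$ is strongly connected with no outgoing edges and $\Sg_{\mathcal A}$ has a unique minimal component by Corollary~\ref{c:one.min.comp}; but the clause about $\equiv_\empty$ should be deleted or restricted to the case where no $\lambda\in B$ is injective.
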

\begin{proof}
First note that if $h\in H\setminus \{1\}$, then $h(x)\neq k(x)$ for any $x\in X$, $k\in \mathcal A\setminus \{h\}$ as $H$ acts freely on $X$.  It follows that $h$ is not equivalent to any other state under $x\cdot {\equiv}$ for any equivalence relation $\equiv$ on $\mathcal A$ and any $x\in X$.  We deduce that, for any word $w\in X^\ast$, we have that $\equiv_w$ places $H\setminus \{1\}$ into singleton classes.    From now on we can focus on the equivalence relation that $\equiv_w$ induces on $G$, as we already understand what it looks like on $H\setminus \{1\}$.

Notice that $\{1\}\lhd G$ and $\equiv_{\{1\}}$ is the equality relation.  Next observe that if $L\lhd G$ and $\Phi(x)\in \mathrm{Hom}(G,H)$, then $x\cdot{\equiv_L} = {\equiv_{\ker\Phi(x)}}$.  Indeed, elements of $H\setminus \{1\}$ are in singleton classes by the above discussion and we have, for $g_1,g_2\in G$, that it is always the case that $g_1(x)=x=g_2(x)$.  Hence,  $g_1\mathrel{(x\cdot {\equiv_L})} g_2$ if and only if $\Phi(x)(g_1)=g_1|_x\equiv_L g_2|_x=\Phi(x)(g_2)$.  But since $\equiv_L$ restricts to the equality relation on $H$, this is equivalent to $\Phi(x)(g_1)=\Phi(x)(g_2)$.  It follows that $x\cdot {\equiv_L} = {\equiv_{\ker \Phi(x)}}$.

Observe that if $\Phi(x)\in \mathrm{Aut}(G)$ and $L\lhd G$, then $x\cdot {\equiv_L} = {\equiv_{\Phi(x)^{-1}(L)}}$.  Again the elements of $H\setminus \{1\}$ are in singleton classes by the first paragraph of the proof.  If $g_1,g_2\in G$, then $g_1(x)=x=g_2(x)$, and so we have that  $g_1\mathrel{(x\cdot {\equiv_L})} g_2$ if and only if $\Phi(x)(g_1)=g_1|_x\equiv_L g_2|_x=\Phi(x)(g_2)$, that is, if and only if $g_1$ and $g_2$ are in the same coset of $\Phi(x)^{-1}(L)$.  Thus, $x\cdot {\equiv_L} ={\equiv_{\Phi(x)^{-1}(L)}}$.   As the equality relation is $\equiv_{\{1\}}$, we deduce that $x$ labels a loop at the equality relation.  Moreover, if $\lambda\in B$, then $x\cdot {\equiv_{\ker\lambda}} = {\equiv_{\ker \lambda\circ \Phi(x)}}$.

 It now follows that the set of equivalence relations in (1) is invariant under the action of $X^\ast$, each $\equiv_{\ker \Phi(x)}$ with $\Phi(x)\in \mathrm{Hom}(G,H)$ is reachable from any of these equivalence relations (including the equality relation) by a single edge (labeled by $x$), and the Schreier graph of the action of $X_1^\ast$ on $\{\equiv_{\ker \lambda}:\lambda\in B\}$ is isomorphic as a labeled graph to the quotient of the Schreier graph of $A$ acting on the right of $B$ via the map sending $\lambda$ to $\equiv_{\ker \lambda}$.  Hence every vertex from $\{{\equiv_{\ker \lambda}}:\lambda\in B\}$ is reachable from some $\equiv_{\ker \Phi(x)}$ with $x\in X\setminus X_1$.  Therefore, every equivalence relation in (1) is reachable from the equality relation by the action of $X^\ast$.  Claims (1)--(4) follow.  The final statement is immediate from~(2),~(4) and the preceding discussion.
\end{proof}

We remark that the equality relation belongs to $\Vmin$ if and only if $\ker \lambda$ is trivial for some $\lambda\in B$.
The next proposition shows that most multispinal groups give rise to non-Hausdorff groupoids, and therefore multispinal groups provide an interesting class of examples to study in connection with simplicity.

\begin{Prop}\label{p:ms.haus}
The groupoid associated to the multispinal group $\mathfrak G$ is Hausdorff if and only if all maps in $B$ are injective.
\end{Prop}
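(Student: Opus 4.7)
The plan is to invoke Proposition~\ref{p:NHd} applied to the automaton $\mathcal{A}$, which contains the nucleus of $\mathfrak{G}$ by Proposition~\ref{p:contains.nuc}.  This reduces Hausdorffness of the associated groupoid to the existence of some $n\geq 0$ such that $|V_w|=1$ in $\Sg_{\mathcal{A}}$ for every word $w$ of length $n$.  To evaluate this, I will split words in $X^\ast$ according to whether they contain a letter of $X_2:=\Phi^{-1}(\mathrm{Hom}(G,H))$ or lie entirely in $X_1^\ast$, where $X_1:=\Phi^{-1}(\Aut(G))$; both $X_1$ and $X_2$ are non-empty by (S1) and (S2).

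First I would handle the easy case: if $w=w_1 x w_2$ with $x\in X_2$, then for any $v\in X^\ast$ the recursive identity $(uz)\cdot{\equiv}=u\cdot(z\cdot{\equiv})$ together with Lemma~\ref{l:gen.sunic.graph}(2) gives $\equiv_{wv}\,=\,w_1\cdot \equiv_{\ker\Phi(x)}$, which is independent of $v$; hence $V_w$ is a singleton.  The substantive case is $w=x_1\cdots x_n\in X_1^\ast$: by Lemma~\ref{l:gen.sunic.graph}(3), $w$ fixes $\equiv_\varepsilon$, so $\equiv_\varepsilon\in V_w$.  Moreover, since every vertex $\equiv_{\ker\lambda}$ of $\Sg_{\mathcal{A}}$ (with $\lambda\in B$) is of the form $\equiv_v$ for some $v\in X^\ast$ (all vertices of $\Sg_{\mathcal A}$ being reachable from the equality relation), $V_w$ also contains $w\cdot\equiv_{\ker\lambda}$ for every $\lambda\in B$.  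Extracting the computation from the proof of Lemma~\ref{l:gen.sunic.graph}, one gets $w\cdot \equiv_{\ker\lambda}\,=\,\equiv_{\alpha^{-1}(\ker\lambda)}$, where $\alpha=\Phi(x_n)\circ\cdots\circ\Phi(x_1)\in A$; since $\alpha$ is an automorphism, this coincides with $\equiv_\varepsilon$ precisely when $\ker\lambda=\{1\}$.

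Combining the two cases, if every $\lambda\in B$ is injective then $|V_w|=1$ for every $w$ of length $\geq 1$, so the groupoid is Hausdorff.  Conversely, if some $\lambda\in B$ has non-trivial kernel, then for any $x\in X_1$ and any $n\geq 1$ the word $w=x^n$ satisfies $V_w\supseteq\{\equiv_\varepsilon,\,\equiv_{\Phi(x)^{-n}(\ker\lambda)}\}$, and the second relation differs from the first as $\Phi(x)$ is an automorphism; hence $|V_w|>1$ for arbitrarily long words and the groupoid cannot be Hausdorff by Proposition~\ref{p:NHd}.  The main point requiring care, rather than a genuine obstacle, is ensuring that for $w\in X_1^\ast$ the set $V_w$ is tested against every $\lambda\in B$ and not merely those arising from a single letter of $X_2$; this depends on reachability of all vertices of $\Sg_{\mathcal{A}}$ from $\equiv_\varepsilon$.
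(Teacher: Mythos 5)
Your proof is correct and follows essentially the same route as the paper: both directions reduce to Proposition~\ref{p:NHd} via Lemma~\ref{l:gen.sunic.graph}, with the key computation being that a word $x^n$ with $\Phi(x)\in\Aut(G)$ carries $\equiv_{\ker\lambda}$ to $\equiv_{\Phi(x)^{-n}(\ker\lambda)}$ while fixing the equality relation. The only cosmetic difference is that for the ``all maps injective'' direction the paper simply observes that $\Sg_{\mathcal A}$ then collapses to the single equality vertex, which makes your two-case analysis of words unnecessary there.
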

\begin{proof}
If the groupoid is Hausdorff, then by Proposition~\ref{p:NHd}, we can find $n\geq 0$ so that all words $w\in X^\ast$ of length $n$ satisfy $|V_w|=1$.  Let $\Phi(x)\in \mathrm{Aut}(G)$.  Then,  by Lemma~\ref{l:gen.sunic.graph}, $x$ labels a loop at equality and hence $\equiv_{x^n}$ is equality.  By choice of $n$, we have $V_{x^n} = \{=\}$, and so $\equiv_{x^nw}$ is equality for any $w\in X^\ast$, i.e., all paths in $\Sg_{\mathcal A}$ labeled by $x^n$ end at equality.  But if $\lambda\in B$, then Lemma~\ref{l:gen.sunic.graph}(4) implies that $x^n$ labels a path from $\equiv_{\ker \lambda}$ to $\equiv_{\ker \lambda \circ \Phi(x)^n}$, and so $\lambda$ must be injective in order for the latter to be equality.  Conversely, if all elements of $B$ are injective, then $\Sg_{\mathcal A}$ has only the equality  vertex by Lemma~\ref{l:gen.sunic.graph} and hence $|V_w|=1$ for all words $w\in X^\ast$.  Thus the groupoid is Hausdorff by Proposition~\ref{p:NHd}.
\end{proof}

\begin{Thm}\label{t:sunic.groups}
Let $G$ and $H$ be finite groups with $H$ acting freely on $X$ and $\Phi\colon X\to \mathrm{Aut}(G)\cup \mathrm{Hom}(G,H)$
be  the data defining a multispinal group.  Let $A,B$ be as in \textrm{(S3)}.  Let $\mathfrak G$ be the corresponding self-similar multispinal group and $K$ a field.  If $\lambda\colon G\to H$ is a homomorphism, then $\widetilde\lambda\colon KG\to KH$ denotes the induced homomorphism.
\begin{enumerate}
  \item $\N{K}{\mathfrak G}{X}$ is simple if and only if $\bigcap_{\lambda\in B}\ker \widetilde\lambda =0$.
  \item If the characteristic of $K$ does not divide $|G|$, then $\N{K}{\mathfrak G}{X}$ is simple if and only if, for each irreducible representation $\rho$ of $G$ over the algebraic closure of $K$, there exists $\lambda\in B$ with $\ker \lambda\leq \ker \rho$.
  \item If the characteristic of $K$ divides $|\ker \lambda|$ for all $\lambda\in \Phi(X)\cap \mathrm{Hom}(G,H)$, then $\N{K}{\mathfrak G}{X}$ is not simple.
\end{enumerate}
\end{Thm}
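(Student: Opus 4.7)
The plan is to apply Theorem~\ref{t:contract.crit} with the automaton $\mathcal{A}$, which contains the nucleus by Proposition~\ref{p:contains.nuc}. By Lemma~\ref{l:gen.sunic.graph}, the simplicity graph $\Sg_{\mathcal{A}}$ has as vertices the equality relation together with $\{\equiv_{\ker\lambda}:\lambda\in B\}$; every vertex is essential, and $\Vmin=\{\equiv_{\ker\lambda}:\lambda\in B\}$. Hence $\Vess\setminus\Vmin$ is either empty (precisely when some $\lambda\in B$ is injective, i.e., $\ker\lambda=\{1\}$) or consists only of the equality relation. Since $E_{=}$ is simply the system $c_k=0$ for every $k\in\mathcal{A}$, by Theorem~\ref{t:contract.crit} simplicity reduces to whether $E_S$ alone forces the zero solution.

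For (1), I first identify $E_S$ in algebraic terms. Because $H$ acts freely on $X$, the proof of Lemma~\ref{l:gen.sunic.graph} shows that each $h\in H\setminus\{1\}$ occupies a singleton class under every $\equiv_{\ker\lambda}$, so $E_{\equiv_{\ker\lambda}}$ forces $c_h=0$ for such $h$; meanwhile the $G$-classes are the cosets of $\ker\lambda$, and the equation $\sum_{g'\in g\ker\lambda}c_{g'}=0$ for each coset is exactly the condition $\widetilde\lambda(a|_G)=0$, where $a|_G=\sum_{g\in G}a_g g\in KG$. Thus $a=\sum_k a_k k\in K\mathcal{A}$ satisfies $E_S$ if and only if $a\in KG$ and $a\in\bigcap_{\lambda\in B}\ker\widetilde\lambda$. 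Since $E_=$ reads $a=0$, Theorem~\ref{t:contract.crit} yields simplicity precisely when $\bigcap_{\lambda\in B}\ker\widetilde\lambda=0$, proving (1). (When some $\lambda\in B$ is injective, $\widetilde\lambda$ is itself injective, so the intersection is automatically zero, consistent with $\Vess=\Vmin$ in that case.)

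For (2), every $\lambda\in B$ factors as $G\twoheadrightarrow G/\ker\lambda\hookrightarrow H$, whence $\widetilde\lambda$ factors as $KG\twoheadrightarrow K[G/\ker\lambda]\hookrightarrow KH$, giving $\ker\widetilde\lambda=\ker\pi_{\ker\lambda}$. The criterion from (1) therefore becomes $\bigcap_{\lambda\in B}\ker\pi_{\ker\lambda}=0$; this is a statement about finitely many $K$-subspaces of $KG$ and is preserved under extension of scalars to the algebraic closure $\bar K$. Applying Proposition~\ref{p:separate.points} over $\bar K$ (whose characteristic also does not divide $|G|$) to the collection $\{\ker\lambda:\lambda\in B\}$ then yields (2).

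For (3), I will exhibit a nonzero element $\sigma_G:=\sum_{g\in G}g\in KG$ lying in $\bigcap_{\lambda\in B}\ker\widetilde\lambda$. Every $\mu\in B$ has the form $\lambda\circ\alpha$ with $\lambda\in\Phi(X)\cap\mathrm{Hom}(G,H)$ and $\alpha\in A\subseteq\mathrm{Aut}(G)$, so $\ker\mu=\alpha^{-1}(\ker\lambda)$ has the same cardinality as $\ker\lambda$; thus the hypothesis that $\mathrm{char}\,K$ divides $|\ker\lambda|$ for all $\lambda\in\Phi(X)\cap\mathrm{Hom}(G,H)$ extends to all $\mu\in B$. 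A direct computation gives $\widetilde\mu(\sigma_G)=|\ker\mu|\sum_{h\in\mu(G)}h$, which vanishes in $KH$ under the hypothesis. Hence $\sigma_G$ is a nonzero element of $\bigcap_{\lambda\in B}\ker\widetilde\lambda$, and (1) gives non-simplicity. The main conceptual step throughout is the identification of $E_{\equiv_{\ker\lambda}}$ with $\ker\widetilde\lambda$ carried out in paragraph two, which leans crucially on $H$ acting freely on $X$ to guarantee that every $h\in H\setminus\{1\}$ is in a singleton class; once this identification is in place, (1)--(3) follow by routine manipulations.
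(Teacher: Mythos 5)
Your proposal is correct and follows essentially the same route as the paper: reduce to Theorem~\ref{t:contract.crit} via Proposition~\ref{p:contains.nuc} and Lemma~\ref{l:gen.sunic.graph}, identify the solutions of $E_{S,K}$ with $KG\cap\bigcap_{\lambda\in B}\ker\widetilde\lambda$, invoke Proposition~\ref{p:separate.points} over the algebraic closure for (2), and use the element $\sum_{g\in G}g$ for (3). Your explicit justifications (the identification $\ker\widetilde\lambda=\ker\pi_{\ker\lambda}$, invariance under scalar extension, and the passage from $\Phi(X)\cap\mathrm{Hom}(G,H)$ to all of $B$ via precomposition by automorphisms) are details the paper leaves implicit, but the argument is the same.
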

\begin{Proof}
Since $\mathcal A=(G\coprod H)/\{1_G\sim 1_H\}$ is an automaton containing the nucleus of $\mathfrak G$ by Proposition~\ref{p:contains.nuc}, it suffices to verify the conditions in Theorem~\ref{t:contract.crit}.
By Lemma~\ref{l:gen.sunic.graph}, $E_{S,K}$ consists of the equations $E_{{\equiv_{\ker \lambda}}, K}$ with $\lambda \in B$. In particular, $E_{S,K}$ contains the equation $c_h=0$ for each $h\in H\setminus \{1\}$.  So any solution to $E_{S,K}$ belongs to $KG$.    But $a\in KG$ satisfies $E_{{\equiv_{\ker \lambda}}, K}$ if and only if $a\in \ker\widetilde\lambda$.  Thus $a$ satisfies $E_{S,K}$ if and only if $a\in\bigcap_{\lambda\in B}\ker \widetilde\lambda$. Since, by Lemma~\ref{l:gen.sunic.graph}, all the vertices of $\Sg_{\mathcal A}$ are essential, including the equality relation,  we deduce from  Theorem~\ref{t:contract.crit} that $\N{K}{\mathfrak G}{X}$ is simple if and only if the equations $c_g=0$, for all $g\in G$, are a consequence of the equations $E_{S,K}$, that is, $a=0$ is a consequence of $a\in\bigcap_{\lambda\in B}\ker \widetilde\lambda$.  This proves (1).

For (2), since the simplicity of $\N{K}{\mathfrak G}{X}$ depends only on the characteristic of $K$, we may assume without loss of generality that $K$ is algebraically closed.  Then the equivalence of the conditions in (1) and (2) is Proposition~\ref{p:separate.points}.

Note that if the characteristic $p$ of $K$ divides all $|\ker \lambda|$ with $\lambda\in \Phi(X)\cap \mathrm{Hom}(G,H)$, then $a=\sum_{g\in G}g$ belongs to $\ker \widetilde\lambda$ for all $\lambda\in B$ as $\widetilde\lambda(a) = |\ker \lambda|\sum_{t\in G/N}t=0$.  Thus (3) follows from (1).
\end{Proof}

Note that the kernels of the complex irreducible representations of a group $G$ can be read off the character table, and so this might be the easiest way to verify condition (2) for $K$ the field of complex numbers.  Remark~\ref{r:explicit} provides an explicit singular element in the case that there is some irreducible representation $\rho$ whose kernel contains no $\ker \lambda$ with $\lambda\in B$.

\begin{Ex}[Direct product construction]\label{ex:direct}
Let $H$ be any non-trivial finite group (acting regularly on the left of itself, that is, $X=H$) and put $G=H\times H$.  Fix a non-trivial element $h\in H$ and define $\Phi\colon H\to \mathrm{Aut}(G)\cup \mathrm{Hom}(G,H)$ by $\Phi(1)(a,b) = (b,a)$, $\Phi(h)(a,b)=b$ and $\Phi(h')$ is the trivial homomorphism $G\to H$ for $h'\neq 1,h$.  Then the corresponding multispinal group $\mathfrak G$, which is spinal and self-replicating, has a non-simple Nekrashevych algebra over every field.  To see this, note that $B$ consists of the two projections $(a,b)\mapsto a$ and $(a,b)\mapsto b$; these clearly separate points.  Consider $0\neq((1,1)-(h,1))((1,1)-(1,h))\in KG$. This belongs to the kernels of both homomorphisms $KG\to KH$ induced by the two projections $G\to H$, and so $\N{K}{\mathfrak G}{H}$ is not simple by Theorem~\ref{t:sunic.groups}(1).

When $H=\mathbb Z_2$, this construction produces the Grigorchuk-Erschler group~\cite{Grigdegree,Erschler}, which was shown by Nekrashevych (unpublished) to have a non-simple Nekrashevych algebra over every field.
\end{Ex}

\subsection{Gupta-Sidki groups, \textsf{GGS}-groups and multi-edge spinal groups}
Let  $m\geq 2$ be an integer.  Then a \textsf{GGS}-group is a multispinal group with $G=C_m$, a cyclic group of order $m$ generated by $t$, and $H=\mathbb Z_m$ (acting on itself, so $X=H$) with $\Phi(m-1)$ the identity automorphism of $G$ and $\Phi(k)\in \mathrm{Hom}(G,H)$ for all $0\leq k\leq m-2$.  Put $e_k = \Phi(k)(t)$ for $0\leq k\leq m-2$; then (S3) is satisfied if and only if $\gcd(e_0,\ldots, e_{m-2},m)=1$, which we assume from now on (cf.~\cite{BGSbranch}).     For example, when $m$ is an odd prime $p$, we obtain the Gupta-Sidki $p$-groups~\cite{GuptaSidki} by putting $e_0=1$, $e_1=-1$ and $e_k=0$ for $2\leq k\leq p-2$; these are finitely generated, infinite $p$-groups.  More generally, it is known that if $m=p^n$ with $p$ a prime, then  a \textsf{GGS}-group is a $p$-group if and only if, for each $0\leq k\leq n-1$, one has that $\sum_{j=1}^{p^{n-k} -1}e_{jp^k-1}\equiv 0\bmod p^{k+1}$.  See~\cite{BGSbranch}.  Notice that Gupta-Sidki $p$-groups with $p>3$ have associated groupoids that are not Hausdorff (by Proposition~\ref{p:ms.haus}), but in which all vertices of the simplicity graph are minimal (since $B$ contains an injective map); when $p=3$, the associated groupoid is Hausdorff by Proposition~\ref{p:ms.haus}.

\begin{Thm}
\label{t:GGS}
Let $\mathfrak G$ be a \textsf{GGS}-group over the alphabet $\mathbb Z_m$ with $m\geq 2$.  If $\Phi(k)$ is an isomorphism for some $0\leq k\leq m-2$, then $\N{K}{\mathfrak G}{\mathbb Z_m}$ is simple for all fields $K$.  Otherwise, $\N{K}{\mathfrak G}{\mathbb Z_m}$ is simple over no field $K$.  In particular, if $m$ is a prime power, e.g., in the case of Gupta-Sidki $p$-groups, the Nekrashevych algebra is simple over any field.
\end{Thm}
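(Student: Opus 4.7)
The plan is to apply Theorem~\ref{t:sunic.groups}(1). For a \textsf{GGS}-group, $\Phi(m-1)$ is the identity automorphism of $G = C_m$ and the remaining $\Phi(k)$ lie in $\mathrm{Hom}(G,H)$, so $A$ is trivial and $B = \{\Phi(0),\ldots,\Phi(m-2)\}$. Writing $t$ for a generator of $G$ and $d_k = \gcd(m, e_k)$, the kernel $\ker \Phi(k)$ is the cyclic subgroup $N_k = \langle t^{m/d_k}\rangle$ of order $d_k$, and a direct computation in the commutative ring $KG$ shows that $\ker\widetilde{\Phi(k)} = \mathrm{Ann}_{KG}(\mathbf{1}_{N_k})$, where $\mathbf{1}_{N_k} = \sum_{n\in N_k} n$. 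Thus simplicity of $\N{K}{\mathfrak{G}}{\mathbb{Z}_m}$ is equivalent to $\bigcap_{k=0}^{m-2} \mathrm{Ann}_{KG}(\mathbf{1}_{N_k}) = 0$.

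The easy direction is when some $\Phi(k)\colon G\to H$ is an isomorphism, i.e.\ $d_k = 1$: then $\widetilde{\Phi(k)}$ is a ring isomorphism, its kernel is $0$, the intersection above vanishes, and $\N{K}{\mathfrak{G}}{\mathbb{Z}_m}$ is simple over every $K$. For the converse, I assume every $d_k > 1$ and exhibit a nonzero element of the intersection. Setting $D = \{d_0,\ldots,d_{m-2}\}$, a set of divisors of $m$ all strictly greater than $1$, I propose
\[ a \;=\; \prod_{d \in D}\bigl(1 - t^{m/d}\bigr) \in KG. \]
For each $d \in D$ the factor $1 - t^{m/d}$ annihilates $\mathbf{1}_{\langle t^{m/d}\rangle}$ because $t^{m/d}$ permutes its summands, so commutativity of $KG$ places $a$ in every $\mathrm{Ann}_{KG}(\mathbf{1}_{N_k})$. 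The main task is then to prove $a \neq 0$ in $K[t]/(t^m - 1)$ for every field $K$.

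For the non-vanishing of $a$, I write $p = \ch(K)$ and $m = p^s m'$ with $\gcd(p,m') = 1$ (taking $s = 0$ if $p = 0$ or $p\nmid m$), so that $t^m - 1 = (t^{m'} - 1)^{p^s}$ in $K[t]$ (by Frobenius if $p > 0$, trivially otherwise). It suffices to find a root of $t^m-1$ in $\ol K$ whose multiplicity in $a$ is strictly less than its multiplicity in $t^m-1$, and any primitive $m'$-th root of unity $\zeta \in \ol K$ is a candidate with multiplicity exactly $p^s$ in $t^m - 1$. Writing $m/d = p^{u_d}v_d$ with $\gcd(p,v_d) = 1$, we have $1 - t^{m/d} = -(t^{v_d}-1)^{p^{u_d}}$, and $\zeta$ is a root of this factor only when $v_d = m'$, which forces $d = p^{s-u_d}$ to be a power of $p$. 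Since $1 \notin D$, the total multiplicity of $\zeta$ in $a$ is at most $\sum_{j=1}^{s} p^{s-j} = (p^s - 1)/(p - 1) < p^s$ (the empty sum when $s = 0$). Hence $t^m - 1 \nmid a$ in $\ol K[t]$, so $a \neq 0$ in $KG$, and the Nekrashevych algebra fails to be simple. The prime-power case is immediate: when $m = p^n$, the constraint $\gcd(e_0,\ldots,e_{m-2},m) = 1$ forces some $e_k$ coprime to $p$, hence to $m$, so we land in the isomorphism case.
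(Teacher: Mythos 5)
Your proposal is correct, and the non-simplicity direction takes a genuinely different route from the paper. The paper handles the case where no $\Phi(k)$ is an isomorphism by invoking Theorem~\ref{t:sunic.groups}(2) over $\mathbb{C}$: the cyclic group $C_m$ has a faithful degree-one complex character, whose (trivial) kernel contains no $\ker\lambda$ with $\lambda\in B$ since all of these are nontrivial, so $\N{\mathbb C}{\mathfrak G}{\mathbb Z_m}$ is not simple; non-simplicity over every field then follows from the general transfer principle of Theorem~\ref{t:contracting}. You instead stay entirely within Theorem~\ref{t:sunic.groups}(1) and produce, over an arbitrary field $K$, an explicit nonzero element $\prod_{d\in D}(1-t^{m/d})$ of $\bigcap_{\lambda\in B}\ker\widetilde\lambda$, verifying nonvanishing in $K[t]/(t^m-1)$ by comparing root multiplicities at a primitive $m'$-th root of unity (your identification $\ker\widetilde{\Phi(k)}=\mathrm{Ann}_{KG}(\mathbf 1_{N_k})$ and the multiplicity count $\sum_{j=1}^{s}p^{s-j}<p^s$ both check out, and taking $D$ to be a \emph{set} of divisors is exactly what makes the bound work). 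What each approach buys: the paper's argument is shorter and leans on the representation-theoretic criterion plus the characteristic-independence theorem, but it only exhibits a singular element indirectly (and Remark~\ref{r:explicit} gives one only in the semisimple case $\ch K\nmid |G|$); yours is uniform in $K$, avoids Theorem~\ref{t:contracting} entirely, and hands you a concrete singular element even in characteristics dividing $m$, at the cost of a more computational polynomial argument. The easy direction and the prime-power specialization are handled identically in both.
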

\begin{proof}
If $\Phi(k)$ is an isomorphism for some $k$,
then $\ker\widetilde{\Phi(k)}=0$ and so $\N{K}{\mathfrak G}{\mathbb Z_m}$ is simple by Theorem~\ref{t:sunic.groups}(1).
 This will occur, in particular, if $m$ is a prime power $p^n$ since $\gcd(e_0,\ldots, e_{m-2},m)=1$ is equivalent to one of the $e_i$ not being divisible by $p$ in this case.

On the other hand, suppose that no $\Phi(k)$ with $0\leq k\leq m-2$ is an isomorphism. Note that $B=\{\Phi(k): 0\leq k\leq m-2\}$.  The group $G$ has a faithful degree one irreducible complex representation $G\to \mathbb C^\times$ given by $\rho(t^k) = e^{2\pi ik/m}$.  Since each $\ker \lambda$ with $\lambda\in B$ is non-trivial, it follows that $\N{\mathbb C}{\mathfrak G}{\mathbb Z_m}$ is not simple by Theorem~\ref{t:sunic.groups}(2) and hence  $\N{K}{\mathfrak G}{\mathbb Z_m}$ is simple over no field $K$ by Theorem~\ref{t:contracting}.
\end{proof}

\textsf{GGS}-groups are generated by bounded automata and hence are amenable by~\cite{boundedaut}.  Therefore, the ample groupoids associated to \textsf{GGS}-groups are minimal, effective and amenable but if none of the $\Phi(k)$ are isomorphisms, then they have simple Steinberg algebras over no field.

In~\cite{multiedgespinal} and elsewhere (see the references therein) a generalization of Gupta-Sidki groups and \textsf{GGS}-groups (in the case $m$ is prime) is considered called multi-edge spinal groups.  Let $p$ be a prime and let $1\leq r\leq p-1$.  Let $G=\mathbb Z_p^r$ and $H=\mathbb Z_p$ (acting on the left of itself by the left regular representation), and let $\Phi(p-1)$ be the identity automorphism of $G$ and $\Phi(k)\in \mathrm{Hom}(G,H)$ for $0\leq k\leq p-2$.  Writing $v_1,\ldots, v_r$ for the standard basis for $G$, the condition (S3) is equivalent to the vectors $e_i=(\Phi(0)(v_i),\Phi(1)(v_i),\ldots, \Phi(p-2)(v_i))\in \mathbb Z_p^{p-1}$, for $i=1,\ldots, r$, being linearly independent.  Note that the vectors $e_1,\ldots, e_r$ determine $\Phi(0),\ldots, \Phi(p-2)$.  The associated multispinal group $\mathfrak G$ is a \emph{multi-edge spinal group}.  It is known for exactly which vectors $e_1,\ldots, e_r$ the multi-edge spinal group is a $p$-group  (they are always finitely generated and infinite, being self-replicating), cf.~\cite{multiedgespinal}.  Multi-edge spinal groups are generated by bounded automata and hence are amenable~\cite{boundedaut}.

\begin{Thm}
Let $p$  be a prime and $e_1,\ldots, e_r$ be linearly independent vectors over $\mathbb Z_p^{p-1}$.  Then the corresponding multi-edge spinal group $\mathfrak G$ has a simple Nekrashevych algebra over every field if $r=1$, and otherwise has a non-simple Nekrashevych algebra over every field.
\end{Thm}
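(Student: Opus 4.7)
The plan is a direct application of Theorem~\ref{t:sunic.groups}. In the multi-edge spinal setting one has $\Phi(X)\cap\mathrm{Aut}(G)=\{\Phi(p-1)\}=\{\mathrm{id}_G\}$, so the subgroup $A$ is trivial and $B=\{\Phi(0),\ldots,\Phi(p-2)\}$. The case $r=1$ is immediate: such a multi-edge spinal group is a \textsf{GGS}-group over $\mathbb Z_p$ (a prime power alphabet), so simplicity of $\N{K}{\mathfrak G}{X}$ over every field $K$ follows from Theorem~\ref{t:GGS}. Alternatively, linear independence of $e_1\in\mathbb Z_p^{p-1}$ forces some $\Phi(k)$ to be nonzero, hence an isomorphism $\mathbb Z_p\to\mathbb Z_p$, and therefore $\ker\widetilde{\Phi(k)}=0$, so Theorem~\ref{t:sunic.groups}(1) yields simplicity over any $K$.

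For $r\geq 2$, my strategy is to establish non-simplicity over $\mathbb C$ via Theorem~\ref{t:sunic.groups}(2) and then invoke Theorem~\ref{t:contracting} to propagate to all characteristics (that theorem forces non-simplicity in characteristic zero to coincide with non-simplicity in every characteristic). Because $G=\mathbb Z_p^r$ is abelian of exponent $p$, its complex irreducible representations are precisely the characters $\rho\colon G\to\mathbb C^{\times}$, which land in the group $\mu_p$ of $p$th roots of unity; fixing an isomorphism $\mu_p\cong\mathbb Z_p$ identifies $\widehat G$ with $G^{*}:=\mathrm{Hom}(G,\mathbb Z_p)$, an $r$-dimensional $\mathbb Z_p$-vector space of cardinality $p^r$. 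Under this identification, the condition $\ker\lambda\leq\ker\rho$ from Theorem~\ref{t:sunic.groups}(2) becomes: $\rho$ is a $\mathbb Z_p$-scalar multiple of $\lambda$ when $\lambda\neq 0$, and is automatic when $\lambda=0$.

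The argument concludes with a short counting estimate. Set $T=\bigcup_{\lambda\in B}\mathbb Z_p\cdot\lambda\subseteq G^{*}$; Theorem~\ref{t:sunic.groups}(2) says that simplicity of $\N{\mathbb C}{\mathfrak G}{X}$ is equivalent to $T=G^{*}$. Since $|B|\leq p-1$ and each nonzero $\lambda\in B$ contributes at most $p-1$ nonzero elements to $T$, we have $|T|\leq 1+(p-1)^{2}=p^{2}-2p+2$, while $|G^{*}|=p^{r}\geq p^{2}>p^{2}-2p+2$. Hence $T\subsetneq G^{*}$, and any $\rho\in G^{*}\setminus T$ (automatically nonzero since $0\in T$) witnesses failure of Theorem~\ref{t:sunic.groups}(2), proving non-simplicity over $\mathbb C$. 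Theorem~\ref{t:contracting} then upgrades this to non-simplicity over every field. The one delicate point to check is that in the edge cases where $B$ contains the zero map or $\mathbb Z_p$-proportional elements the same bound on $|T|$ persists; but such coincidences only decrease $|T|$, so the bound $|T|\leq 1+(p-1)^2<p^r$ remains valid.
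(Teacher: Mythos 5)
Your proposal is correct and follows essentially the paper's route: Theorem~\ref{t:GGS} for $r=1$, and for $r\geq 2$ a counting argument against Theorem~\ref{t:sunic.groups}(2) over $\mathbb C$ followed by Theorem~\ref{t:contracting} — the only difference being that you count points of the character group ($|T|\leq 1+(p-1)^2<p^r$) where the paper counts hyperplanes ($\tfrac{p^r-1}{p-1}>p-1=|B|$), which is the same estimate in dual form. One aside is inaccurate — for $\lambda=0$ the condition $\ker\lambda\leq\ker\rho$ forces $\rho$ trivial rather than being automatic — but this is never used, and your stated criterion ``simple iff $T=G^*$'' and the ensuing count are both correct.
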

\begin{proof}
If $r=1$, this follows from Theorem~\ref{t:GGS}.  Assume now that $2\leq r\leq p-1$ and let $G=\mathbb Z_p^r$.  If $\mu_p$ denotes the group of $p^{th}$-roots of unity in $\mathbb C$, then $\mu_p$ is a cyclic group of order $p$ and $\widehat{G}=\mathrm{Hom}(G,\mu_p)$.  Thus the kernels of the non-trivial irreducible representations of $G$ are the $\frac{p^r-1}{p-1}=1+p+p^2+\cdots + p^{r-1}>p-1$ subgroups of index $p$.  Since $B$ consists of the $p-1$ homomorphisms $\Phi(k)$ with $0\leq k\leq p-2$, we deduce that $\N{\mathbb C}{\mathfrak G}{\mathbb Z_p}$ is not simple by Theorem~\ref{t:sunic.groups}(2) and hence $\N{K}{\mathfrak G}{\mathbb Z_p}$ is not simple for any field $K$ by Theorem~\ref{t:contracting}.
\end{proof}

The above theorem provides more examples of  minimal, effective and amenable ample groupoids whose Steinberg algebras are not simple over any field.

\subsection{\v{S}uni\'{c} groups}
The following construction of finitely generated $p$-groups of intermediate growth, generalizing the Grigorchuk group, is due to \v{S}uni\'{c}.  It gives the \v{S}uni\'{c} groups~\cite{sunicgroups} associated to primitive polynomials, but we use a more field theoretic language.  Afterward, we will give the general construction.

\begin{Thm}\label{t:primitivepoly}
For every prime $p$, there is a finitely generated, infinite, contracting self-similar $p$-group of intermediate growth whose Nekrashevych algebra is simple over all fields except those of characteristic $p$ (over which it is not simple).
\end{Thm}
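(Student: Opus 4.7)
The plan is to realize the desired group as a \v{S}uni\'{c} group $\mathfrak G$ attached to a primitive polynomial of degree $n \geq 2$, and to read off simplicity and non-simplicity from Theorem~\ref{t:sunic.groups}.

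Fix $n \geq 2$ and a primitive polynomial $f(x) \in \mathbb{Z}_p[x]$ of degree $n$; such polynomials exist for every $p$ and $n$. Put $G = \mathbb{Z}_p^n$, identified as a $\mathbb{Z}_p$-vector space with $\mathbb{Z}_p[x]/(f) \cong \mathbb{F}_{p^n}$; let $\alpha \in \mathrm{Aut}(G)$ be the companion matrix of $f$ (i.e.\ multiplication by $x$ on $\mathbb{F}_{p^n}$); and let $H = \mathbb{Z}_p$ act regularly on $X = \mathbb{Z}_p$. Choose a non-zero linear form $\lambda \colon G \to H$ and define $\Phi \colon X \to \mathrm{Aut}(G) \cup \mathrm{Hom}(G, H)$ by $\Phi(0) = \alpha$, $\Phi(p-1) = \lambda$, and $\Phi(k) = 0$ for $0 < k < p-1$. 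The resulting multispinal group $\mathfrak G$ is the \v{S}uni\'{c} group associated to $f$; by Proposition~\ref{p:contains.nuc} it is contracting and self-similar, and by~\cite{sunicgroups} it is a finitely generated, infinite $p$-group of intermediate growth.

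The crux is to verify (S3) while simultaneously identifying $B$ with all of $\mathrm{Hom}(G, H)$. Since $f$ is primitive, $\alpha$ has order $p^n - 1$ and its orbit on $G \setminus \{0\}$ is everything. The same holds for $\alpha^{T}$ acting on $(\mathbb{Z}_p^n)^*$, since $\alpha$ and $\alpha^{T}$ share the minimal polynomial $f$. Hence $\{\lambda \circ \alpha^{i} : 0 \leq i < p^n - 1\} = \mathrm{Hom}(G, H) \setminus \{0\}$, so $B = (\Phi(X) \cap \mathrm{Hom}(G, H)) \cdot \langle \alpha \rangle = \mathrm{Hom}(G, H)$. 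This immediately yields (S3) and realizes every subgroup of $G$ of index $1$ or $p$ as some $\ker \mu$ with $\mu \in B$.

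For $K$ with $\mathrm{char}(K) \neq p$, applying Theorem~\ref{t:sunic.groups}(2) requires that every irreducible representation $\rho$ of $G$ over $\overline{K}$ admit some $\mu \in B$ with $\ker \mu \leq \ker \rho$. Since $G$ is abelian of exponent $p$, such a $\rho$ is one-dimensional with image in $\mu_p(\overline{K})$, so $\ker \rho$ has index $1$ or $p$ in $G$; by the preceding paragraph some $\mu \in B$ has $\ker \mu = \ker \rho$, giving simplicity. For $K$ with $\mathrm{char}(K) = p$, every $\mu \in \Phi(X) \cap \mathrm{Hom}(G, H)$ has $|\ker \mu| \in \{p^{n-1}, p^n\}$, divisible by $p$ since $n \geq 2$; Theorem~\ref{t:sunic.groups}(3) then yields non-simplicity. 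The only delicate point is the duality step ensuring that $B$ covers every hyperplane of $G$, which relies crucially on the primitivity of $f$; everything else follows directly from the criterion.
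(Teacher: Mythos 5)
Your proposal is correct and follows essentially the same route as the paper: build the \v{S}uni\'{c} group of a primitive polynomial, show that $B$ consists of all non-zero linear forms on $G=\mathbb{Z}_p^n$, and then invoke Theorem~\ref{t:sunic.groups}(2) and (3). The only difference is cosmetic: you establish transitivity of $\langle\alpha\rangle$ on the non-zero functionals via the transpose/minimal-polynomial argument, whereas the paper identifies $\mathrm{Hom}(F,\mathbb{Z}_p)$ with $F$ equivariantly using non-degeneracy of the trace form; both are valid.
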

\begin{proof}
Let $p$ be a prime and view $\mathbb Z_p$ as the  $p$-element field. Let $F$ be a finite field of $q=p^n$ elements with $n\geq 2$.  Let $\alpha$ be a primitive element of $F$, that is, $F^\times =\langle \alpha\rangle$ (recall that the multiplicative group of a finite field is cyclic).  Let $\mathrm{Tr}\colon F\to \mathbb Z_p$ be the trace map, defined by $\mathrm{Tr}(\beta) = \sum_{i=0}^{q-1}\beta^{p^i}$.  It is a surjective group homomorphism.  Moreover, there is an isomorphism $\psi\colon F\to \mathrm{Hom}(F,\mathbb Z_p)$ given by $\psi(\gamma)(\beta) = \mathrm{Tr}(\gamma\beta)$ since the trace form $(\gamma,\beta)\mapsto \mathrm{Tr}(\gamma\beta)$ is non-degenerate (cf.~\cite[Theorem~2.24]{finitefields}).   We now define a multispinal group with $G=F$ (under addition), $H=\mathbb Z_p$ (acting on itself by the left regular representation) and $\Phi\colon \mathbb Z_p\to \mathrm{Aut}(F)\cup \mathrm{Hom}(F,\mathbb Z_p)$ given by $\Phi(0)(\beta) = \alpha\beta$ (an automorphism of $F$), $\Phi(p-1)(\beta) = \mathrm{Tr}(\beta)$  and $\Phi(k)(\beta) =0\in \mathbb Z_p$ for $0<k<p-1$ (homomorphisms $F\to \mathbb Z_p$).   In this case, $A=F^\times$ and $B=\psi(F^\times)$ is the set of all non-trivial homomorphisms $F\to \mathbb Z_p$.  It is proved in~\cite[Propositions~9,~10]{sunicgroups} (and the discussion thereafter) that the corresponding multispinal group $\mathfrak G$ is an infinite $p$-group of intermediate growth.

If $K$ is an algebraically closed field of characteristic different than $p$, then each irreducible representation $\rho$ of $G$ is of degree one, that is, given by a homomorphism $\rho\colon G\to K^\times$.  As any finite subgroup of the multiplicative group of a field is cyclic, it follows that $\ker \rho$ contains a subgroup of index $p$.  But every subgroup of index $p$ is the kernel of an element of $B$ and so the Nekrashevych algebra of $\mathfrak G$ is simple by Theorem~\ref{t:sunic.groups}(2).  On the other hand, since $p\mid |\ker \lambda|$ for all $\lambda\in B$, it follows that the Nekrashevych algebra is not simple in characteristic $p$ by Theorem~\ref{t:sunic.groups}(3).
\end{proof}

For example, if $p=2$ and $|F|=4$, then the associated group is the Grigorchuk group~\cite{grigroup}.

The above argument generalizes to arbitrary \v{S}uni\'{c} groups~\cite{sunicgroups}; we recall the construction.  Let $p$ be a prime and $f\in \mathbb Z_p[x]$ a polynomial of degree $n$ with non-zero constant term.  Let $G=\mathbb Z_p^n$, $H=\mathbb Z_p$ (acting on the left of itself). We regard elements of $G$ as column vectors, and so $\mathrm{Aut}(G)= \mathrm{GL}_n(\mathbb Z_p)$ and homomorphisms $G \to H$ are $1\times n$ matrices over $\mathbb Z_p$, that is, row vectors in $\mathbb Z_p^n$. Let $\Phi(0) = M_f$ be the companion matrix of $f$, $\Phi(p-1)$ be the projection to the last coordinate and $\Phi(k)$ be the trivial homomorphism $G\to H$ for $0<k<p-1$.  This data gives a \v{S}uni\'{c} group~\cite{sunicgroups}, denoted $G_{p,f}$. It is also a multispinal group: the conditions (S1), (S2) are obviously satisfied.
The action of $M_f$ on $\mathrm{Hom}(\mathbb Z_p^n,\mathbb Z_p)$ corresponds to its right action on row vectors, and $B$ is the orbit of the vector $(0, \ldots, 0, 1)$ under this action. That (S3) holds was observed in~\cite{sunicgroups}: namely, the row vector $(0,\ldots,0,1)$ is a cyclic vector for $M_f$.  In fact, it is noted in~\cite{sunicgroups} that up to a change of basis, any multispinal group constructed from $G$ and $H$ with exactly one automorphism and one non-trivial homomorphism $G\to H$ is of this form.

 Note that the right action of $M_f$ on row vectors induces a right action of $M_f$ on the set $\mathbb P(\mathbb Z_p^n)$ of lines through the origin in $\mathbb Z_p^n$ (i.e., the projective space over $\mathbb Z_p^n$).

If $f$ is the minimal polynomial of a primitive element $\alpha$ of a finite extension $F$ of $\mathbb Z_p$ (a so-called primitive polynomial), then $G_{p,f}$ is the self-similar group from Theorem~\ref{t:primitivepoly}.

\begin{Thm}
A \v{S}uni\'{c} group $G_{p,f}$ has a simple Nekrashevych algebra over all fields if $f$ has degree one.  Otherwise, it has a non-simple Nekrashevych algebra in characteristic $p$  and if the characteristic of $K$ is different than $p$, then $\N{K}{G_{p,f}}{\mathbb Z_p}$ is simple if and only if the companion matrix $M_f$ acts transitively on $\mathbb P(\mathbb Z_p^n)$ where $n$ is the degree of $f$.
\end{Thm}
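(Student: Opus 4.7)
The plan is to split into three cases matching the three clauses of the statement: (a) $\deg f=1$; (b) $n:=\deg f\geq 2$ and $\operatorname{char} K=p$; (c) $n\geq 2$ and $\operatorname{char} K\neq p$. First I will identify the set $B$ explicitly. Since $A=\langle M_f\rangle$ and the only non-zero element of $\Phi(X)\cap\mathrm{Hom}(G,H)$ is $\Phi(p-1)$, corresponding to the row vector $(0,\ldots,0,1)$, the non-zero elements of $B$ are exactly $\{(0,\ldots,0,1)M_f^k:k\in\mathbb Z\}$ (together with the zero map when $p>2$). In each case I will then invoke the appropriate part of Theorem~\ref{t:sunic.groups}.

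For case (a), $\Phi(p-1)\colon\mathbb Z_p\to\mathbb Z_p$ is a non-zero linear map, hence bijective, so $\widetilde{\Phi(p-1)}$ has trivial kernel, forcing $\bigcap_{\lambda\in B}\ker\widetilde\lambda=0$ and simplicity by Theorem~\ref{t:sunic.groups}(1). For case (b), every $\lambda\in\Phi(X)\cap\mathrm{Hom}(G,H)$ has kernel of order $p^n$ or $p^{n-1}$, both divisible by $p$ because $n\geq 2$, so non-simplicity follows from Theorem~\ref{t:sunic.groups}(3).

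The substantive case is (c). Passing to the algebraic closure $\bar K$, every irreducible representation $\rho$ of $G=\mathbb Z_p^n$ is one-dimensional, since $G$ is abelian and $\operatorname{char}\bar K\nmid|G|$; moreover, its image lies in the cyclic group $\mu_p\subseteq\bar K^\times$ of $p$-th roots of unity, because $G$ has exponent $p$. Fixing an embedding $\mu_p\hookrightarrow\bar K^\times$ yields a kernel-preserving bijection between the non-trivial irreducible representations of $G$ over $\bar K$ and the non-zero row vectors in $\mathbb Z_p^n$, viewed as homomorphisms $G\to\mathbb Z_p$. The trivial representation is automatically accommodated by the criterion of Theorem~\ref{t:sunic.groups}(2) (its kernel is all of $G$), so the criterion reduces to: for every non-zero $u\in\mathbb Z_p^n$ there exists a non-zero $\lambda\in B$ with $\ker\lambda\leq\ker u$. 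Since both sides are hyperplanes in $\mathbb Z_p^n$, this containment is an equality, which amounts to $\lambda$ and $u$ defining the same point of $\mathbb P(\mathbb Z_p^n)$.

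Thus the criterion says that the $\langle M_f\rangle$-orbit of $(0,\ldots,0,1)$ in $\mathbb P(\mathbb Z_p^n)$ is all of $\mathbb P(\mathbb Z_p^n)$; since an orbit exhausting the space forces the partition into orbits to be trivial, this is equivalent to $\langle M_f\rangle$ acting transitively on $\mathbb P(\mathbb Z_p^n)$, as desired. No single step is technically hard; the main obstacle, such as it is, is recognizing that the orbit of $(0,\ldots,0,1)$ in the \emph{projective} space---not in $\mathbb Z_p^n$ itself---is what Theorem~\ref{t:sunic.groups}(2) demands, via the passage from characters of $\mathbb Z_p^n$ to elements of $\mathrm{Hom}(\mathbb Z_p^n,\mathbb Z_p)$ and the hyperplane-equality observation.
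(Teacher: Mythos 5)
Your proposal is correct and follows essentially the same route as the paper: degree one is handled by the injectivity of $\Phi(p-1)$ via Theorem~\ref{t:sunic.groups}(1), characteristic $p$ by the divisibility of $|\ker\lambda|$ via part (3), and the remaining case by identifying irreducible representations over the algebraic closure with $\mathrm{Hom}(\mathbb Z_p^n,\mu_p)\cong\mathrm{Hom}(\mathbb Z_p^n,\mathbb Z_p)$ and matching kernels (hyperplanes) with points of $\mathbb P(\mathbb Z_p^n)$, exactly as in the paper's proof. Your explicit handling of the zero maps in $B$ when $p>2$ is a small point the paper glosses over, but it changes nothing.
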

\begin{Proof}
First note that if $f$ has degree one, then $\Phi(p-1)$ is injective and so Theorem~\ref{t:sunic.groups}(1) immediately implies that $\N{K}{G_{p,f}}{\mathbb Z_p}$ is simple.  So assume from now  on that the degree of $f$ is greater than one.
Without loss of generality, we may assume that $K$ is algebraically closed.
Two nonzero homomorphisms $\mathbb Z_p^n\to \mathbb Z_p$ have the same kernel if and only if they differ by a scalar multiple, that is, if they span the same line.
Thus the action of $M_f$ on these kernels given by $\ker \lambda\mapsto \ker \lambda\circ M_f$
corresponds to its right action on $\mathbb P(\mathbb Z_p^n)$.
Since $B$ is the orbit of the projection to the last coordinate, we see that the transitivity of the action of $M_f$ on $\mathbb P(\mathbb Z_p^n)$ is equivalent to every subgroup of index $p$ appearing as a kernel of an element of $B$.

Suppose that $K$ has characteristic $p$.  Then $p$ divides $|\ker \lambda|$ for each $\lambda\in B$ (as $n\geq 2$) and $\N{K}{G_{p,f}}{\mathbb Z_p}$ is not simple by Theorem~\ref{t:sunic.groups}(3).  If the characteristic of $K$ is different than $p$, then the group $\mu_p$ of $p^{th}$-roots of unity in $K^\times$ is a cyclic group of order $p$ and the irreducible representations of $\mathbb Z_p^n$ are the elements of $\mathrm{Hom}(\mathbb Z_p^n,\mu_p)$.  Thus every subgroup of index $p$ is the kernel of an irreducible representation, and so $\N{K}{G_{p,f}}{\mathbb Z_p}$ is simple if and only if $M_f$ acts transitively on $\mathbb P(\mathbb Z_p^n)$ by Theorem~\ref{t:sunic.groups}(2).
\end{Proof}

Note that every \v{S}uni\'{c} group is amenable, being generated by a bounded automaton~\cite{boundedaut}.
The condition that $M_f$ acts transitively on $\mathbb P(\mathbb Z_p^n)$ implies that $\mathbb Z_p^n=\bigcup_{\lambda\in B}\ker \lambda$, and hence, so long as the degree of $f$ is at least $2$, it follows from the results of~\cite[Propositions~9,~10]{sunicgroups} that whenever the Nekrashevych algebra $\N{\mathbb C}{G_{p,f}}{\mathbb Z_p}$ is simple, $G$ is an infinite $p$-group of intermediate growth.  When $f$ has degree $2$, then the index $p$-subgroups are the same as the non-trivial cyclic subgroups of $G$ and so $G_{f,p}$ will be a $p$-group if and only if  $\N{K}{G_{p,f}}{\mathbb Z_p}$ is simple outside of characteristic $p$ by~\cite[Proposition~9]{sunicgroups}.

For example, if $f=x^2+1$ and $p=2$, then the companion matrix is the permutation matrix for a transposition, which  does not act transitively on the three points of $\mathbb P(\mathbb Z_2^2)$, and hence the corresponding group $G_{2,x^2+1}$, known as the Grigorchuk-Erschler group, does not have a simple Nekrashevych algebra over any field.  This was already discussed in Example~\ref{ex:direct}.

 The group $G_{2,x^3+1}$ is known as the Grigorchuk overgroup.  The corresponding companion matrix is the permutation matrix associated to the $3$-cycle.  It does not act transitively on the $7$ points of $\mathbb P(\mathbb Z_2^3)$ and hence this group has a non-simple Nekrashevych algebra over every field.  The same situation will occur for $G_{p,x^n-1}$ for any prime $p$ and $n\geq 2$. The companion matrix is the permutation matrix for an $n$-cycle, which does not act transitively on the $\frac{p^n-1}{p-1}$ points of $\mathbb P(\mathbb Z_p^n)$, and so the Nekrashevych algebra is non-simple over every field.

The groups $G_{p,x-1}$ have a simple Nekrashevych algebra over every field since $x-1$ has degree one.  The case $p=3$ is known as the Fabrykowski-Gupta group~\cite{FabGupta}.  Other primes were considered by Grigorchuk~\cite{justinfinitebranch}.

\subsection{Contracting groups with simple Nekrashevych algebras outside of a prescribed set of primes}
Let $n>2$ be an integer and put $G=\mathbb Z_n^2$ and $H=\mathbb Z_n$ (acting on itself).   Define
\begin{gather*}
\Phi(0) = \begin{pmatrix} 1 & 1\\ 0 & 1\end{pmatrix},\quad \Phi(1) = \begin{pmatrix} 1 & 0\\ 1& 1\end{pmatrix},\\ \Phi(n-1) = \begin{pmatrix} 0 & 1\end{pmatrix} \quad  \text{and}\ \Phi(k)=\begin{pmatrix} 0 & 0\end{pmatrix}\ \text{for}\ 1<k<n-1
\end{gather*}
where we identify homomorphisms from $G$ to $H$ with row vectors in the usual way.
  Let $\mathfrak G_n$ be the corresponding multispinal group.  It is self-replicating and hence infinite.

It is well known that $\Phi(0)$ and $\Phi(1)$ generate $\mathrm{SL}_2(\mathbb Z_n)$. Indeed, $\mathbb Z_n$ is a semilocal ring and the special linear group over any semilocal ring is generated by elementary matrices~\cite[Theorem~4.3.9]{classicalgpktheory}.  Since each elementary $2\times 2$ matrix over $\mathbb Z_n$ is a power of one of the two matrices $\Phi(0)$ and $\Phi(1)$, the claim follows.

\begin{Prop}\label{p:znstuff}
The following hold:
\begin{enumerate}
\item $(x, y)$ gives a surjective homomorphism $\mathbb Z_n^2\to \mathbb Z_n$ if and only if $\gcd(x,y,n)=1$.
\item $\mathrm{SL}_2(\mathbb Z_n)$ acts transitively on the right of the set of surjective homomorphisms $\mathbb Z_n^2\to \mathbb Z_n$.
\item Any homomorphism $\lambda\colon \mathbb Z_n^2\to \mathbb Z_n$ factors through a surjective homomorphism.
\end{enumerate}
\end{Prop}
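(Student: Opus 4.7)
The plan is to dispatch the three claims in order; parts (1) and (2) are standard, while (3) requires a small lifting argument using the Chinese Remainder Theorem.  For (1), the homomorphism with row vector $(x,y)$ sends $(a,b)\mapsto xa+yb$, so its image is the subgroup $\langle x,y\rangle \leq \mathbb Z_n$.  Standard divisibility gives $\langle x,y\rangle = \gcd(x,y,n)\mathbb Z_n$ (viewing the gcd as an integer and projecting to $\mathbb Z_n$), and this is all of $\mathbb Z_n$ precisely when $\gcd(x,y,n)=1$.

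For (2), it suffices to show that every surjective row vector $(x,y)$ lies in the $\mathrm{SL}_2(\mathbb Z_n)$-orbit of $(1,0)$.  Since $\gcd(x,y,n)=1$, Bezout's identity over $\mathbb Z$ supplies integers $c_0,d_0,k$ with $xd_0 - yc_0 + kn = 1$, hence $xd_0 - yc_0 = 1$ in $\mathbb Z_n$.  Then
\[
g = \begin{pmatrix} x & y \\ c_0 & d_0 \end{pmatrix} \in \mathrm{SL}_2(\mathbb Z_n)
\]
satisfies $(1,0)g = (x,y)$, giving the desired transitivity.

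For (3), the substantive part, given $\lambda$ with row vector $(x,y)$ I would set $d = \gcd(x,y,n)$ and $m = n/d$, and write $x = da$, $y = db$ with $a,b$ integers that are well-defined modulo $m$.  One checks $\gcd(a,b,m) = 1$, since any prime simultaneously dividing $a$, $b$, and $m$ would give a common divisor of $x$, $y$, $n$ strictly exceeding $d$.  The goal is then to produce a lift $(a',b') \in \mathbb Z_n^2$ with $a'\equiv a\pmod m$, $b'\equiv b\pmod m$, and $\gcd(a',b',n) = 1$; the surjective homomorphism $\sigma = (a',b')$ followed by the multiplication-by-$d$ map $\mathbb Z_n\to\mathbb Z_n$ will then equal $\lambda$, since $d(a'-a)$ is divisible by $dm = n$ (and similarly for the second coordinate).

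I would construct the lift prime-by-prime via the Chinese Remainder Theorem.  For each prime power $p^k$ exactly dividing $n$: if $p\mid m$, then the congruences modulo $p^k$ are already prescribed and they preclude $p\mid a'$ and $p\mid b'$ simultaneously (this is precisely the content of $\gcd(a,b,m)=1$); if $p\nmid m$, then no congruence modulo $p^k$ is imposed and I simply choose $a'\equiv 1\pmod{p^k}$.  In both cases $p$ cannot divide $\gcd(a',b',n)$, so the lift is as required.  The only delicate aspect is the bookkeeping between $\mathbb Z$ and $\mathbb Z_n$; no conceptual obstacle is anticipated.
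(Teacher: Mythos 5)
Your proposal is correct. Parts (1) and (2) coincide with the paper's argument: the same B\'ezout characterization of surjectivity, and the same explicit completion of a unimodular row to a matrix in $\mathrm{SL}_2(\mathbb Z_n)$ (the paper uses the base point $(0,1)$ and the matrix $\left(\begin{smallmatrix} b & -a\\ x & y\end{smallmatrix}\right)$, you use $(1,0)$ and put $(x,y)$ in the top row; this is immaterial). The real divergence is in part (3), where your treatment is more careful than the paper's. The paper simply writes $d=\gcd(x,y,n)$ and asserts that $(x/d,y/d)$ is surjective ``by the first item,'' but this is sensitive to the choice of integer representatives: for $n=6$ and $(x,y)=(4,4)$ one gets $d=2$ and $(x/d,y/d)=(2,2)$, which has $\gcd(2,2,6)=2$ and is \emph{not} surjective, whereas the representative $(-2,-2)$ yields the surjective $(-1,-1)$. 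Your Chinese Remainder lifting of $(a,b)$ from $\mathbb Z_m$ to a unimodular pair $(a',b')$ in $\mathbb Z_n$ is precisely the adjustment needed to make this step airtight, so your version actually repairs a small gap in the paper's proof. One cosmetic imprecision: when $p\mid m$ the congruence $a'\equiv a\pmod m$ prescribes $a'$ only modulo $p^{v_p(m)}$, not modulo the full $p^k$ dividing $n$; but since $v_p(m)\geq 1$ this still pins down $a'$ and $b'$ modulo $p$, which is all your argument uses, so the conclusion $\gcd(a',b',n)=1$ stands.
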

\begin{Proof}
First note that $(x,y)$ is surjective if and only if $1=xa+yb\bmod n$ for some $a,b$, that is, if and only if $1=xa+yb+cn$, which is equivalent to $\gcd(x,y,n)=1$.  If $\gcd(x,y,n) = 1$, then we can write $1=xa+yb+cn$ and so the matrix \[\begin{pmatrix} b & -a\\ x & y \end{pmatrix}\] belongs to $\mathrm{SL}_2(\mathbb Z_n)$ and takes $(0,1)$ to $(x, y)$ via right multiplication, establishing (2).  For the final item, let $(x, y)$ be any homomorphism and let $d=\gcd(x,y,n)$.  Then $(x/d, y/d)$ is a surjective homomorphism by the first item and $(x, y)= d(x/d, y/d)$.
\end{Proof}

It follows that in our previous notation, we have that $A=\mathrm{SL}_2(\mathbb Z_n)$ and $B$ consists of all surjective homomorphisms $\mathbb Z_n^2\to \mathbb Z_n$.

\begin{Thm}
Let $K$ be a field.  Then  $\N{K}{\mathfrak G_n}{\mathbb Z_n}$ is simple if and only if $p$ does not divide $n$.  Hence, if $\mathcal P$  any finite set of primes, there is a contracting, self-replicating, self-similar group whose Nekrashevych algebra is simple over precisely those fields whose characteristic does not belong to $\mathcal P$.
\end{Thm}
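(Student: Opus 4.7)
The plan is to apply the three parts of Theorem~\ref{t:sunic.groups} to $\mathfrak G_n$ and then use the resulting characterisation, with a tailored choice of $n$, to build the examples required in the second assertion. By Proposition~\ref{p:znstuff} the set $B$ appearing in hypothesis \textrm{(S3)} for $\mathfrak G_n$ is exactly the collection of all surjective homomorphisms $\mathbb Z_n^2\to\mathbb Z_n$; combined with Theorem~\ref{t:sunic.groups}(1) this reduces simplicity of $\N{K}{\mathfrak G_n}{\mathbb Z_n}$ to the vanishing of $\bigcap_{\lambda\in B}\ker\widetilde\lambda$ inside $KG$.

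For the direction $\mathrm{char}(K)\mid n \Rightarrow \N{K}{\mathfrak G_n}{\mathbb Z_n}$ non-simple, I will invoke Theorem~\ref{t:sunic.groups}(3). The set $\Phi(\mathbb Z_n)\cap\mathrm{Hom}(G,H)$ consists of $\Phi(n-1)=(0,1)$, whose kernel has order $n$, together with the trivial maps $\Phi(k)=(0,0)$ for $1<k<n-1$, whose kernel is $G$ of order $n^2$. When $\mathrm{char}(K)\mid n$ both orders are divisible by $\mathrm{char}(K)$, so (3) applies, with $\sum_{g\in G}g\in KG$ as an explicit witness to non-simplicity.

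For the direction $\mathrm{char}(K)\nmid n \Rightarrow \N{K}{\mathfrak G_n}{\mathbb Z_n}$ simple, I will use Theorem~\ref{t:sunic.groups}(2). Since simplicity depends only on the characteristic of $K$, I may enlarge $K$ to an algebraically closed field containing a primitive $n$-th root of unity $\zeta$. Then every irreducible representation of the abelian group $G=\mathbb Z_n^2$ is a degree-one character $\chi_{(x,y)}(a,b)=\zeta^{xa+yb}$ indexed by $(x,y)\in\mathbb Z_n^2$, and the task is to produce, for each such $(x,y)$, an element $\lambda\in B$ with $\ker\lambda\subseteq\ker\chi_{(x,y)}$. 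The crux is an elementary number-theoretic factorisation: lifting $(x,y)$ to $\mathbb Z^2$, set $d=\gcd(x,y)$ and write $(x,y)=d(u,v)$ with $\gcd(u,v)=1$; the identity $\gcd(u,v,n)=1$ combined with Proposition~\ref{p:znstuff}(1) shows $\lambda(a,b)=ua+vb$ defines a surjective homomorphism $G\to\mathbb Z_n$, while $\chi_{(x,y)}=\psi\circ\lambda$ for the character $\psi(k)=\zeta^{dk}$, so that $\ker\lambda\subseteq\ker\chi_{(x,y)}$. This factorisation step, translating the representation-theoretic condition of (2) into an elementary coprimality statement, is what I expect to be the main technical content; the boundary case of the trivial character is handled by any $\lambda\in B$ since $\ker\chi_{(0,0)}=G$.

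For the final assertion about a prescribed finite set of primes $\mathcal P$, the non-empty case is handled by choosing $n>2$ whose prime divisors are exactly $\mathcal P$: take $n=4$ if $\mathcal P=\{2\}$, and $n=\prod_{p\in\mathcal P}p$ otherwise. Then $\mathrm{char}(K)\mid n$ iff $\mathrm{char}(K)\in\mathcal P$, and the equivalence just established exhibits $\mathfrak G_n$ as the required example, self-replicating and contracting as noted in the paragraphs preceding Proposition~\ref{p:znstuff} and by Proposition~\ref{p:contains.nuc}. The case $\mathcal P=\emptyset$ is covered by the Basilica group, which is contracting, self-replicating, and was shown in Section~\ref{s:ex} to have a simple Nekrashevych algebra over every field.
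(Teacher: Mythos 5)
Your proof is correct and follows essentially the same route as the paper: non-simplicity for $\operatorname{char}K\mid n$ via Theorem~\ref{t:sunic.groups}(3), simplicity otherwise via Theorem~\ref{t:sunic.groups}(2) together with the fact (Proposition~\ref{p:znstuff}) that $B$ contains all surjective homomorphisms and every character of $\mathbb Z_n^2$ factors through one of them, and then a suitable choice of $n$ for the last assertion. The only (harmless) deviations are that you re-derive the factorisation through a surjective homomorphism with integer gcd's rather than citing Proposition~\ref{p:znstuff}(3) directly, and that for $\mathcal P=\{2\}$ you take $\mathfrak G_4$ (and handle $\mathcal P=\emptyset$ via the Basilica group) where the paper instead invokes the \v{S}uni\'{c}/Grigorchuk groups of Theorem~\ref{t:primitivepoly}.
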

\begin{Proof}
Without loss of generality we may assume that $K$ is algebraically closed.
If $K$ has characteristic $p$ dividing $n$, then since each $\lambda\in B$ has kernel of size $n$, we obtain that $\N{K}{\mathfrak G_n}{\mathbb Z_n}$ is not simple over $K$ by Theorem~\ref{t:sunic.groups}(3).

If the characteristic of $K$ does not divide $n$, let $\mu_n$ be the group of $n^{th}$-roots of unity in $K$.  Then $\mu_n\cong \mathbb Z_n$ and $\widehat{G}=\mathrm{Hom}(G,\mu_n)$.  Since every homomorphism from $G$ to $\mu_n$ factors through a surjective one by Proposition~\ref{p:znstuff} and $B$ contains every surjective homomorphism to $\mathbb Z_n$, we deduce that $\N{K}{G}{\mathbb Z_n}$ is simple by Theorem~\ref{t:sunic.groups}(2).

The final statement follows because if $\mathcal P=\{2\}$, then we can take the \v{S}uni\'{c} group $G_{2,f}$ associated to a primitive polynomial (e.g., the Grigorchuk group) by Theorem~\ref{t:primitivepoly}.  Else, let $n$ be the product of the primes in $\mathcal P$ and $\mathfrak G_n$ will do the trick.
\end{Proof}

\def\malce{\mathbin{\hbox{$\bigcirc$\rlap{\kern-7.75pt\raise0,50pt\hbox{${\tt
  m}$}}}}}\def\cprime{$'$} \def\cprime{$'$} \def\cprime{$'$} \def\cprime{$'$}
  \def\cprime{$'$} \def\cprime{$'$} \def\cprime{$'$} \def\cprime{$'$}
  \def\cprime{$'$} \def\cprime{$'$}

\end{document}